\newcommand{\N}{\mathbb{N}}
\newcommand{\Z}{\mathbb{Z}}
\newcommand{\R}{\mathbb{R}}
\newcommand{\C}{\mathbb{C}}
\def\beq{\begin{equation}}
\def\eeq{\end{equation}}
\def\arr{\hbox to 20pt{\rightarrowfill}}
\def\e{\varepsilon}
\def\so{\mathfrak {so}}
\def\Sl{\mathrm {Sl}} 
\def\SO{\mathrm {SO}} 
\def\D{\mathcal D} 
\def\cC{\mathcal C} 
\def\H{\mathcal H} 
\def\O{\mathcal O} 
\def\W{\mathcal W}
\def\U{\mathcal U} 
\def\F{\mathbf F}
\newenvironment{res} 
               {\begin{equation} 
\begin{minipage}{0.85\textwidth}} 
               { \end{minipage}\end{equation} } 
\def\ber{\begin{res} } 
\def\eer{\end{res}} 
\numberwithin{equation}{section} 
\newtheorem{thm}{Theorem}[section]
\newcommand{\oline}{\overline}
\newtheorem{lemma}[thm]{Lemma} 
\newtheorem{lem}[thm]{Lemma}
\newtheorem{cor}[thm]{Corollary} 
\newtheorem{ex}[thm]{Example} 
\newtheorem{prop}[thm]{Proposition} 
\newtheorem{df}[thm]{Definition} 
\newtheorem{rem}[thm]{Remark}
\def\section{\@startsection {section}{1}{\z@}{3.5ex plus 1ex minus 
    .2ex}{2.3ex plus .2ex}{\large\bf}} 
    \def\subsection{\@startsection{subsection}{2}{\z@}{3.25ex plus 1ex minus 
 .2ex}{1.5ex plus .2ex}{\bf}} 
\def\qed{\hfill $\square$\par\vspace{5pt}} 
\def\bysame{\leavevmode\hbox to3em{\hrulefill}\,}
\def\Ad{\operatorname{Ad}}
\def\e{\epsilon} 
\def\g{\gamma}
\def\af{\mathfrak{a}} 
\def\diag{\operatorname {diag}}
\def\Ext{\operatorname{Ext}}
\def\gf{\mathfrak{g}} 
\def\h{\mathfrak{h}}
\def\kf{\mathfrak{k}}
\def\nf{\mathfrak{n}}
\def\pf{\mathfrak{p}} 
\def\qf{\mathfrak{q}}
\def\O{\mathcal{O}}
\def\PW{\operatorname{PW}}
\def\H{\mathcal{H}} 
\def\F{\mathcal{F}} 
\def\K{\mathcal{K}} 
\def\E{\mathcal{E}} 
\def\P{\mathbb{P}} 
\def\S{\mathcal{S}} 
\def\W{\mathcal{W}}
\def\Sym{\mathrm{Sym}}
\def\bs{\backslash}
\renewcommand{\Re}{\mbox{\rm Re}\,} 
\renewcommand{\Im}{\mbox{\rm Im}\,} 
\begin{document} 
\title[Crown theory]
{Crown theory for the upper half plane}

\author{Bernhard Kr\"otz} 
\address{Max-Planck-Institut f\"ur Mathematik\\  
Vivatsgasse 7\\ D-53111 Bonn
\\email: kroetz@mpim-bonn.mpg.de}

\date{\today} 
\thanks{}
\maketitle

\bigskip 
\centerline{\it Stephen Gelbart gewidmet}
\bigskip 
\begin{align*} \hbox{ \it He is Mensch.} & \\ 
& \hbox{\it E. L.}\end{align*}  
\newpage 
\tableofcontents
\newpage
\section{Vorwort} This paper features no introduction; it has 
a table of contents. 
\bigskip \par % Actually, I never liked to write introductions and I hardly read them. 
%My interest is with the substance. It is 
%my firm believe that introductions are in essence superfluous. 
%To put it in another light: lecturers at the Seminaire Bourbaki 
%are not introduced. 

\par The material for this text is scattered throughout 
my work, often only found in unpublished notes of mine.
I focus on the upper half plane but want to mention that most matters 
hold true for arbitrary Riemannian symmetric spaces of the 
non-compact type. When I think it is useful, then remarks and references 
to the more general literature are made.

\par Over the years I  had the opportunity to lecture on the 
crown topic at various institutions; these are: 
\begin{itemize}
\item Research Institute of Mathematical Sciences (R.I.M.S.), 
      Kyoto, various lectures in the fall semester of 2004
\item Indian Statistical Institute, Bangalore, Lectures on the 
      crown domain, March 2005 
\item University of Hokkaido at Sapporo, Center of excellence 
      lecture series "Introduction to complex crowns", May 2005
\item Morningside Center of Mathematics, Academica Sinica, Beijing, 
      "Introduction to complex crowns", lectures for a summer 
      school, July 2005 
\item Max-Planck-Institut f\"ur Mathematik, various presentations. 
\end{itemize}
It is my special pleasure to thank my various hosts at this opportunity 
again.

\newpage

\section{Symbols} Throughout this text capital Latin letters, e.g. $G$,  
will be used for real algebraic groups; $\C$-subscripts will denote
complexifications, e.g. $G_\C$. Lie algebras of groups 
will be denoted by the corresponding lower case altdeutsche 
Frakturschrift, e.g. $\gf$ is the Lie algebra of $G$. 

\par In this paper our concern is with 

$$G=\Sl(2,\R) \quad \hbox{and}\quad G_\C= \Sl(2,\C)\, .$$
The following subgroups of $G$ and their complexifications
will be of relevance for us: 

\begin{align*} A &=\left \{ a_t=\begin{pmatrix} t & 0 \\ 0 & 1/t
\end{pmatrix}\mid t>0\right\}\, ,\\
A_\C &=\left \{ a_z=\begin{pmatrix} z & 0 \\ 0 & 1/z
\end{pmatrix}\mid z\in \C^*\right\}\, ,\end{align*}

$$H=\SO(1,1; \R) \quad \hbox{and} \quad H_\C=\SO(1,1; \C)\, ,$$
$$K=\SO(2,\R) \quad \hbox{and} \quad K_\C=\SO(2, \C)\, ,$$ 
and 
\begin{align*} N &=\left \{ n_x=\begin{pmatrix} 1 & x \\ 0 & 1
\end{pmatrix}\mid x\in\R\right\}\, ,\\
N_\C &=\left \{ n_z=\begin{pmatrix} 1 & z \\ 0 & 1
\end{pmatrix}\mid z\in \C\right\}\, . \end{align*}

\section{The upper half plane, its affine complexification and the crown}
Our concern is with the Riemannian symmetric space 
$$X=G/K$$
of the non-compact type. We usually identify $X$ with the 
upper halfplane ${\bf H}=\{ z\in \C\mid \Im z>0\}$ via the map 

$$X\to {\bf H}, \ \ gK\mapsto {a i +b \over ci +d} \qquad 
\left(g=\begin{pmatrix} a & b\\ c & d \end{pmatrix}\right)\, .$$
We use $x_0=K$ for the base point $eK\in X$ and note that $x_0=i$ 
within our identification. 

\par We view $X={\bf H}$ inside of the complex projective 
space $\P^1(\C)=\C\cup\{\infty\}$ and note that 
$\P^1(\C)$ is homogeneous for $G_\C$ with respect 
to the usual fractional linear action: 

$$g(z)= {a z +b \over cz +d}\qquad \left(z\in \P^1(\C), 
g=\begin{pmatrix} a & b\\ c & d \end{pmatrix}\in G_\C\right)\, .$$  

\par Upon complexifiying $G$ and $K$ we obtain 
the {\it affine complexification} 
$$X_\C =G_\C/K_\C$$
of $X$. Observe that the map 
\begin{equation}\label{e1}  X\hookrightarrow X_\C, \ \ gK\mapsto gK_\C\end{equation}
constitutes a $G$-equivariant embedding which realizes 
$X$ as a totally real submanifold of $X_\C$. 
We will use a more concrete model for  $X_\C$: 
the mapping 
$$X_\C\to \P^1(\C)\times \P^1(\C)\bs{\diag}, \ \ gK_\C\mapsto \left(g(i), g(-i)\right)  $$
is a $G_\C$-equivariant diffeomorphism. With this 
identification of $X_\C$ the embedding of (\ref{e1}) becomes 
\begin{equation}\label{e2}  X\hookrightarrow X_\C, \ \ z\mapsto (z,\oline z) \, .\end{equation}

\par We will denote by $\oline X$ the lower half plane and arrive 
at the object of our desire:

$$\Xi= X\times \oline X$$
the {\it crown domain} for $\Sl(2,\R)$. 
Let us list some obvious properties of $\Xi$ and emphasize that they 
hold for arbitrary crowns: 
\begin{itemize}
\item $\Xi$ is a $G$-invariant Stein domain in $X_\C$. 
\item $G$ acts properly on $\Xi$. 
\item $\Xi=X\times \oline X$ is the {\it complex double} -- this always holds if 
      the underlying Riemannian space $X=G/K$ is already complex.  
\end{itemize}
 
\section{Geometric structure theory}
\subsection{Basic structure theory}
\subsubsection{$\Xi$ as a union of elliptic $G$-orbits}
We note that 
$$\af=\left\{ \begin{pmatrix} x & 0 \\ 0 & -x\end{pmatrix}\mid x\in\R
\right\}$$
and focus on a domain inside: 
$$\Omega=\left\{ \begin{pmatrix} x & 0 \\ 0 & -x\end{pmatrix}\mid 
x\in(-\pi/4, \pi/4)\right\}\, .$$
We note that $\Omega$ is invariant under the Weyl group 
$\W=N_K(A)/Z_K(A)\simeq \Z_2$ and that 
that $\exp(i\Omega)$ consist of elliptic elements 
in $G_\C$. 
\par The following proposition constitutes of what we call the 
{\it elliptic parameterization} of the crown domain. 

\begin{prop}\label{p=e} $\Xi=G\exp(i\Omega)\cdot x_0$. 
\end{prop}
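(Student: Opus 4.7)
The plan is to prove the two inclusions separately. For $G\exp(i\Omega)\cdot x_0 \subseteq \Xi$, I observe that if $Y = \diag(x,-x) \in \Omega$, then $\exp(iY)\cdot x_0 = (e^{2ix}i,\,-e^{2ix}i)$, whose imaginary parts are $\pm\cos(2x)$; these have the right signs for the pair to lie in $X \times \oline X = \Xi$ exactly because $x \in (-\pi/4, \pi/4)$. Since $G$ acts diagonally on $\P^1(\C) \times \P^1(\C)$ by real M\"obius transformations, it preserves $\Xi$, and the first inclusion follows.

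For $\Xi \subseteq G\exp(i\Omega)\cdot x_0$, I introduce the $G$-invariant
\[
d \colon \Xi \longrightarrow [0,\infty), \qquad d(z_1,z_2) := d_{{\bf H}}(z_1,\oline{z_2}),
\]
where $d_{\bf H}$ is hyperbolic distance on ${\bf H} = X$. Invariance under $G$ is immediate from reality of $G \subset \Sl(2,\R)$: one has $g(\oline{z_2}) = \oline{g(z_2)}$, and $g$ is a hyperbolic isometry. On the elliptic slice, a short trigonometric computation gives $\cosh d\bigl(\exp(iY)\cdot x_0\bigr) = 2\sec^2(2x) - 1$, which increases continuously from $1$ at $x=0$ to $+\infty$ as $|x|\to \pi/4$; hence $d$ restricted to $\exp(i\Omega)\cdot x_0$ surjects onto $[0,\infty)$.

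It then remains to show that each level set $d^{-1}(r)$ is a single $G$-orbit. Since $G$ acts transitively on ${\bf H}$ with isotropy $K$ at $i$, and $K$ acts on $T_i {\bf H}$ by rotations and hence transitively on every hyperbolic sphere around $i$, $G$ is transitive on ordered pairs in ${\bf H} \times {\bf H}$ at any fixed hyperbolic distance. Given $(z_1,z_2), (z_1',z_2') \in d^{-1}(r)$, the associated pairs $(z_1,\oline{z_2})$ and $(z_1',\oline{z_2'})$ lie in ${\bf H}\times{\bf H}$ at distance $r$, so some $g \in G$ sends one to the other; by reality of $g$ this forces $g \cdot (z_1,z_2) = (z_1',z_2')$, completing the proof. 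The main obstacle is precisely this transitivity on level sets; the distance computation on the elliptic slice and the $G$-invariance of $d$ are essentially immediate.
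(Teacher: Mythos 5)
Your proof is correct, but it takes a genuinely different route from the paper. The paper establishes the reverse inclusion by exploiting the homogeneous-space structure $\Xi = (G\times G)/(K\times K)$ together with the decomposition $G\times G = \diag(G)\,\operatorname{antidiag}(H)\,(K\times K)$; this reduces the question to checking that the antidiagonal $H$-orbit through $x_0$ lies in $G\exp(i\Omega)\cdot x_0$, which follows from $A\exp(i\Omega)\cdot i = X$. Your argument replaces that structure-theoretic decomposition by an explicit $G$-invariant $d(z_1,z_2) = d_{\bf H}(z_1,\oline{z_2})$ and a two-point transitivity argument. Both approaches are ultimately parametrizing the one-parameter family of diagonal $G$-orbits in $X\times\oline X$, the paper by the real parameter $t$ in $\operatorname{antidiag}(H)$, you by the value of $d$; your computation $\cosh d(\exp(iY)\cdot x_0) = 2\sec^2(2x)-1$ and its strict monotonicity on $[0,\pi/4)$ are right, and the conjugation trick $g(\oline{z_2})=\oline{g(z_2)}$ correctly upgrades transitivity on pairs at fixed distance in ${\bf H}\times{\bf H}$ to transitivity on level sets of $d$ in $\Xi$. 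The tradeoff: your version is elementary and self-contained, requiring only classical hyperbolic geometry, whereas the paper's decomposition is exactly what carries over to the crown domain of a general Riemannian symmetric space of noncompact type, which is the viewpoint the author is cultivating (cf.\ the reference to \cite{KSII}, Th.\ 7.5).
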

\begin{proof} (cf.\ \cite{KSII}, Th. 7.5 for the most general case). 
We first show that $G\exp(i\Omega)\cdot x_0\subset \Xi$. 
By $G$-invariance of $\Xi$, this reduces to verify that 
$$\exp(i\Omega).x_0\in\Xi\, .$$
Explicitly this means 
$$(e^{2i\phi} i,  -e^{2i\phi }i)\in X\times \oline X$$
for $\phi\in (-\pi/4, \pi/4)$;  evidently 
true. 
\par Conversely, we want to see that every element 
in $\Xi$ lies on a $G$-orbit through $\exp(i\Omega)$. 
Let $S=G\times G$ and $U=K\times K$ and observe, that 
$\Xi=S/U$ as homogeneous space. Now 
$$S=\diag (G) \operatorname{antidiag}(H) U$$
and all what we have to see is that 
$$ \operatorname{antidiag}(H) \cdot x_0\subset G\exp(i\Omega)\cdot x_0, $$
or, more concretely, 

\begin{equation}\label{i1} \{ \left( {i \cosh t  +\sinh t\over i\sinh t  +\cosh t}, 
- {i\cosh t  +\sinh t\over i\sinh t  +\cosh t}\right)\mid 
t\in\R\} \subset  G\exp(i\Omega)\cdot x_0\, .\end{equation} 
Now we use that 
$A\exp(i\Omega) (i) = X$
and conclude that the LHS of (\ref{i1}) is contained 
in  $A\exp(i\Omega)\cdot x_0$. 
\end{proof}

\subsubsection{$\Xi$ as a union of unipotent $G$-orbits}
The following parameterization of $\Xi$ is relevant 
for our discussion of automorphic cusp forms at the end of this article. 
It was discovered in \cite{KO}.

We consider the Lie algebra of $N$:
$$\nf=\left\{\begin{pmatrix} 0 & x \\ 0 & 0\end{pmatrix}\mid x\in\R\right\}$$
and focus on the subdomain
$$\Lambda=\left\{\begin{pmatrix} 0 & x \\ 0 & 0\end{pmatrix}\mid x\in (-1,1)
\right\}\, .$$
The following proposition constitutes of what we call the 
{\it unipotent parameterization} of the crown domain, see
 \cite{KO}, Th. 3.4 for $G=\Sl(2,\R)$ and \cite{KO}, Th. 8.3 for 
$G$ general.

\begin{prop}\label{p=u} $\Xi=G\exp(i\Lambda)\cdot x_0$. 
\end{prop}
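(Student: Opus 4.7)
The plan is to mimic the proof of Proposition~\ref{p=e}: first verify the easy inclusion, then use an Iwasawa reduction on the first coordinate followed by a Cartan-type decomposition on $\oline X$ for the converse.

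For the inclusion $G\exp(i\Lambda) \cdot x_0 \subset \Xi$, a direct computation in the product model $X_\C \simeq \P^1(\C)\times\P^1(\C)\setminus\operatorname{diag}$ shows $n_{ix} \cdot x_0 = (i(1+x),\, i(x-1))$, whose first coordinate lies in $X$ (imaginary part $1+x > 0$) and second in $\oline X$ (imaginary part $x-1 < 0$) precisely when $x \in (-1,1)$; the full inclusion follows by $G$-invariance of $\Xi$. For the converse, I would use the Iwasawa decomposition $G = NAK$: since $NA$ acts simply transitively on $X$, for any $(z, w) \in \Xi$ there exists $g_0 \in NA$ with $g_0(i) = z$. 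Replacing $(z, w)$ by $g_0^{-1} \cdot (z, w) = (i, w')$ reduces the problem to showing $(i, w') \in G\exp(i\Lambda) \cdot x_0$ for an arbitrary $w' \in \oline X$.

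The key observation is that $K=\SO(2,\R)$, acting by fractional linear transformations, fixes both $i$ \emph{and} $-i$ --- a one-line computation on $k_\theta \cdot (-i)$. Hence $K$ is the full $G$-stabilizer of the basepoint $-i$ of $\oline X$, and the Cartan decomposition $G = KAK$ gives $\oline X = K\cdot A \cdot(-i) = \{k \cdot (-is) \mid k \in K,\ s > 0\}$. Writing $w' = k \cdot (-is)$ and introducing the parameter $x = (1-s)/(1+s) \in (-1,1)$ together with $g = k \cdot a_{1/\sqrt{1+x}} \in G$, the computation
\[
g \cdot n_{ix} \cdot x_0 = \bigl(k \cdot a_{1/\sqrt{1+x}}(i(1+x)),\ k \cdot a_{1/\sqrt{1+x}}(i(x-1))\bigr) = \bigl(k(i),\ k(-is)\bigr) = (i, w')
\]
finishes the proof. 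The only real subtlety is the double fixed-point property of $K$ on $\{\pm i\}$; once that is in place, the rest is bookkeeping in the parameters $x \leftrightarrow s$.
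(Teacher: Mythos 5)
Your proof is correct, and it takes a genuinely different route from the paper's. The paper's "conceptual" proof of the hard inclusion invokes the complex convexity theorem (Theorem~\ref{t=cc}): for $Y \in \Omega$ one finds $k \in K$ with $\Im \log a_\C(k\exp(iY)\cdot x_0) = 0$, which places the orbit inside $N_\C A \cdot x_0$ and hence matches elliptic and unipotent orbits; alternatively the paper invokes the orbit-matching Lemma~\ref{l=match}, itself proved in the hyperbolic model of the appendix. Either way the paper leans on machinery developed elsewhere. Your argument, by contrast, is self-contained and elementary: you exploit the complex-double structure $\Xi = X \times \oline X$ directly, run the Iwasawa decomposition on the first factor to normalize to $(i, w')$, observe that $K$ fixes \emph{both} $i$ and $-i$ (so $K$ is also the stabilizer of the $\oline X$-basepoint), and then solve for $(k, s) \mapsto (x, g)$ explicitly via the reparametrization $x = (1-s)/(1+s)$. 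I checked the arithmetic: $n_{ix}\cdot x_0 = (i(1+x), i(x-1))$, and with $a_{1/\sqrt{1+x}}$ acting as multiplication by $1/(1+x)$ one indeed gets $a_{1/\sqrt{1+x}}(i(x-1)) = i(x-1)/(1+x) = -is$, so $g n_{ix}\cdot x_0 = (i, w')$ as claimed.

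The trade-off is worth noting. Your proof is shorter and requires none of Subsubsection~\ref{s=c}, which makes it preferable for an $\Sl(2,\R)$-only exposition. But it leans hard on the fact that $\Xi$ is literally a product $X\times\oline X$ with $K$ fixing both basepoints --- a feature peculiar to the Hermitian rank-one situation. The paper's route through the complex convexity theorem is precisely the one that survives passage to a general semisimple $G$ (see \cite{KO}, Th.~8.3), where no such product structure exists. So your argument buys elementariness at the cost of generality; the paper's buys generality at the cost of invoking Theorem~\ref{t=cc}.
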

\begin{proof} We wish to give the more conceptual proof. 
Let us first see that $G\exp(i\Lambda)\cdot x_0\subset \Xi$, i.e.
$$\exp(i\Lambda)\cdot \subset \Xi\, .$$ 
Concretely this means that 
$$(ix + i, - i +ix )\in X\times \oline X$$
for all $x\in (-1,1)$; evidently true. 

\par For the reverse inclusion we will borrow in content and notation from 
Subsubsection \ref{s=c} from below. It is a conceptional argument. 
Fix $Y\in\Omega$. Then, according to the complex convexity 
theorem \ref{t=cc} there exist a $k\in K$ such that 
$$\Im \log a_\C(k\exp(i Y)\cdot x_0)=0\, .$$
In other words, 
$$k\exp(iY)\cdot x_0\in N_\C A\cdot x_0= A N_\C \cdot x_0\, .$$
We conclude that $\exp(iY)\cdot G \exp(i\nf)\cdot x_0$. 
From our discussion in (i) we deduce that 
$\exp(iY)\cdot x_0\in G\exp(i\Lambda)\cdot x_0$. 
\end{proof}

Another way to prove Prop. \ref{p=u} is by means of 
matching elliptic and unipotent $G$-orbits. We 
cite \cite{KO}, Lemma 3.3: 

\begin{lemma} \label{l=match} For all $\phi\in (-\pi/4,\pi/4)$ the following identity 
holds: 
$$G\begin{pmatrix} 1 & i \sin 2\phi\\ 0 & 1\end{pmatrix}\cdot x_0= 
G\begin{pmatrix} e^{i\phi} & 0\\ 0 & e^{-i\phi}\end{pmatrix}\cdot x_0\, .$$
\end{lemma}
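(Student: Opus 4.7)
The plan is to use that, for $G=\Sl(2,\R)$, the $G$-orbits on $\Xi=X\times\overline X$ are classified by a single real invariant --- namely the hyperbolic distance $d(z,\bar w)$ in the upper half plane --- and then to verify that the two representatives appearing in the lemma give the same value of this invariant.

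First I would observe that, because $G$ has real matrix entries, the $\R$-diffeomorphism $(z,w)\mapsto(z,\bar w)$ from $X\times\overline X$ to $X\times X$ intertwines the diagonal $G$-actions. On $X\times X$ the $KAK$-decomposition (equivalently, the elliptic parameterization of Proposition \ref{p=e} transported by this map, modded out by the Weyl action $\phi\mapsto-\phi$) implies that the hyperbolic distance between the two coordinates is a complete $G$-invariant. So two points of $\Xi$ lie on the same $G$-orbit if and only if they produce the same value of $d(z,\bar w)$.

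Next I would apply both representatives to $x_0=(i,-i)$. The right-hand side yields $(ie^{2i\phi},-ie^{2i\phi})$; conjugating the second coordinate gives the pair $(-\sin 2\phi+i\cos 2\phi,\;\sin 2\phi+i\cos 2\phi)$. The left-hand side yields $(i(1+\sin 2\phi),\,i(\sin 2\phi-1))$, which conjugates to $(i(1+\sin 2\phi),\,i(1-\sin 2\phi))$. In both cases one finds $|z_1-z_2|^2=4\sin^2 2\phi$ and $\Im z_1\cdot\Im z_2=\cos^2 2\phi$, so the standard formula
$$\cosh d(z_1,z_2)=1+\frac{|z_1-z_2|^2}{2\,\Im z_1\,\Im z_2}$$
gives $1+2\tan^2 2\phi$ on either side. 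The orbits therefore coincide.

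The main obstacle is the opening step: one needs to invoke that the hyperbolic distance is a complete invariant for the diagonal action of $\Sl(2,\R)$ on $X\times X$. This is the substance of $KAK$, and as noted it is also essentially already packaged in the elliptic parameterization of Proposition \ref{p=e}. Once that fact is granted, the remainder of the argument collapses to the short trigonometric computation above.
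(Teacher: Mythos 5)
Your proof is correct, and it takes a genuinely different route from the one in the paper. The paper proves Lemma \ref{l=match} in Appendix A by passing to the hyperbolic model, i.e.\ replacing $\Sl(2,\R)$ by $\SO_e(1,2)$ and realizing $X_\C$ as the complex quadric $\{Q({\bf z})=1\}$. There the lemma becomes $G\,n_{i\sin t}\cdot{\bf x}_0 = G\,a_t\cdot{\bf x}_0$, which is verified by an explicit matrix computation: one writes $kbn_{iy}\cdot{\bf x}_0$ for $k\in K$, $b\in A$, chooses $\theta=\pi/2$, and solves $\tanh r = \tfrac{\frac12 y^2}{1-\frac12 y^2}$ for $r$ (possible since $|y|=|\sin t|<1$). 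Your argument instead stays in the $X\times\oline X$ model, transports it to $X\times X$ via $(z,w)\mapsto(z,\bar w)$, and invokes two-point homogeneity of $\operatorname{PSl}(2,\R)$ on the hyperbolic plane to reduce the orbit-matching question to a single scalar invariant, the hyperbolic distance, which you then compare by an explicit formula. Both approaches bottom out in an elementary computation; yours is shorter and makes the geometry more transparent, while the paper's proof is placed in the appendix precisely because the $\SO_e(1,2)$ quadric model is being developed there for other purposes (Proposition \ref{p=b'}, Gindikin's model of the crown) and the lemma falls out as a byproduct of that machinery. Your justification of the key step — that the hyperbolic distance is a complete invariant for the diagonal action, and that this is equivalent to the $KAK$-decomposition or to Proposition \ref{p=e} read through the Weyl group quotient — is accurate and adequately brief. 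The trigonometric verification that both representatives give $\cosh d = 1+2\tan^2 2\phi$ checks out.
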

\begin{proof} This is best seen in the hyperbolic model 
of the crown which we discuss in Appendix A; the proof
of the lemma will be given  there, too. 
\end{proof} 

\subsubsection{Realization in the tangent bundle}
Let 

$$\pf=\Sym(2,\R)_{\rm {tr}=0}$$
and recall that: 
\begin{itemize} 
\item $\gf=\kf\oplus \pf$, the Cartan decomposition; 
\item $\pf$ is a linear $K$-module which naturally identifies 
      with $T_{x_0} X$, the tangent space of $X$ at $x_0$. 
\end{itemize}
We write $TX$ for the tangent bundle which is naturally isomorphic 
with $G\times_K \pf$ via the map 
$$G\times_K\pf \to T X, \ \ [g,Y]\mapsto {d\over dt}\Big|_{t=0} g\exp(tY)\cdot x_0\, .$$

Inside $\pf$ we consider the disc 

$$\hat\Omega=\{ Y\in \pf\mid \operatorname{spec} (Y)\subset (-\pi/4, \pi/4)\}$$
and note that $\hat \Omega$ is $K$-invariant and 
$$\hat\Omega\cap \af =\Omega\, .$$
Therefore we can form the disc-bundle $G\times_K \hat\Omega$ inside of 
$TX$. 

The following result was obtained in \cite{AG}, in full 
generality. 

\begin{prop}\label{p=ag} The map 
$$ G\times_K \hat\Omega\to \Xi, \ \ [g,Y]\mapsto g\exp(iY)\cdot x_0$$
is a $G$-equivariant diffeomorphism. 
\end{prop}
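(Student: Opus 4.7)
The plan is to verify, in turn, that the map is well-defined, that its image equals $\Xi$, that it is a local diffeomorphism, and finally that it is injective. Well-definedness reduces to the identity $\exp(i\Ad(k)Y)=k\exp(iY)k^{-1}$ combined with $k\cdot x_0=x_0$, and $G$-equivariance is built into the formula. Since every element of $\pf=\Sym(2,\R)_{\mathrm{tr}=0}$ is $\Ad(K)$-conjugate to a diagonal matrix in $\af$, one has $\hat\Omega=\Ad(K)\Omega$, so Proposition \ref{p=e} immediately gives both containment of the image in $\Xi$ and surjectivity onto $\Xi$.

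For the local diffeomorphism claim, by $G$-equivariance it suffices to check invertibility of the differential at a point $[e,Y]$ with $Y\in\hat\Omega$. Identifying the tangent space of $G\times_K\hat\Omega$ at $[e,Y]$ with $\pf\oplus\pf$ via the Cartan decomposition and the linear structure on $\hat\Omega$, the standard formula for the derivative of $\exp$ shows that the differential sends $(X,Y')\in\pf\oplus\pf$ to the tangent vector represented in $\pf_\C\cong T_{\exp(iY)\cdot x_0}X_\C$ by
\begin{equation*}
\cos(\operatorname{ad} Y)(X)+i\,\frac{\sin(\operatorname{ad} Y)}{\operatorname{ad} Y}(Y').
\end{equation*}
The assumption $\operatorname{spec}(Y)\subset(-\pi/4,\pi/4)$ puts the eigenvalues of $\operatorname{ad}(Y)|_{\pf}$ inside $(-\pi/2,\pi/2)$, where both $\cos$ and the entire function $\sin(z)/z$ are strictly positive; hence the displayed operator is a real-linear isomorphism of four-dimensional spaces.

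The main obstacle is injectivity. My preferred route applies the polar decomposition $\hat\Omega=\Ad(K)\cdot\overline{\Omega_+}$ in order to reduce to two representatives $[g_1,H_1]$ and $[g_2,H_2]$ with $H_i=\diag(x_i,-x_i)$ and $x_i\in[0,\pi/4)$, and then to analyze the single equation $g\exp(iH_1)\cdot x_0=\exp(iH_2)\cdot x_0$ with $g:=g_2^{-1}g_1\in G$. Writing $\exp(iH_i)\cdot x_0=(ie^{2ix_i},-ie^{2ix_i})\in\P^1(\C)\times\P^1(\C)$ and solving the two fractional linear equations $g(ie^{2ix_1})=ie^{2ix_2}$ and $g(-ie^{2ix_1})=-ie^{2ix_2}$ in the entries of $g$, the reality of those entries together with the strict bound $x_1+x_2<\pi/2$ forces $x_1=x_2$ and $g\in Z_K(H_1)$, which is precisely the assertion $[g_1,H_1]=[g_2,H_2]$ in $G\times_K\hat\Omega$. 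A more conceptual alternative notes that both the source (a disc bundle over the contractible space $X$) and the target $\mathbf{H}\times\overline{\mathbf{H}}$ are simply connected, so the local diffeomorphism is automatically a diffeomorphism as soon as it is proper; properness can be extracted from the given properness of the $G$-action on $\Xi$ together with the relative compactness of $\hat\Omega$ in $\pf$.
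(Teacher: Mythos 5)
Your proposal is correct and considerably more detailed than the paper's own two-line sketch (the paper writes only ``Ontoness is clear. Injectivity can be obtained by direct computation.''). Your surjectivity argument ($\hat\Omega=\Ad(K)\Omega$ combined with Proposition~\ref{p=e}) and your first injectivity route (reduce by $K$-conjugation to diagonal representatives, solve the resulting fractional-linear equations over $\R$) are exactly the ``direct computation'' the paper has in mind, written out. You also supply the local-diffeomorphism step via the derivative-of-$\exp$ formula, which the paper omits but which is genuinely needed for the word ``diffeomorphism''; the computation is right, with the small notational caveat that $\operatorname{ad}Y$ does not preserve $\pf$ -- what enters is $(\operatorname{ad}Y)^2\vert_\pf$, whose square roots $\pm2\mu$ with $\mu\in[0,\pi/4)$ lie in $(-\pi/2,\pi/2)$, so $\cos$ and $\sin(z)/z$ are indeed positive there.

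Your alternative ``conceptual'' injectivity route has a gap, however. Simple connectivity of source and target plus the local-diffeomorphism property would give the result once the map is proper, but properness of the map does not follow merely from properness of the $G$-action on $\Xi$ together with boundedness of $\hat\Omega$: given a compact $C\subset\Xi$ and a sequence $[g_n,Y_n]$ with images in $C$, the issue is to exclude $Y_n\to Y_\infty\in\partial\hat\Omega$. In that case $\exp(iY_n)\cdot x_0$ tends to a point of $\partial_d\Xi$, which lies outside $\Xi$ (and outside $\Xi^\pm$), so properness of the $G$-action on $\Xi$ says nothing about the sequence $g_n$. One would need an additional $G$-invariant ``coordinate'' controlling the $\hat\Omega$-parameter -- for instance the invariant from Theorem~\ref{th=coca}, or an explicit $G$-invariant exhaustion of $\Xi$ -- to rule this out. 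Since your first route already closes the argument, this is a supplementary remark rather than a defect, but the properness claim should not be asserted as ``extracted'' without such an ingredient.
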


\begin{proof} Ontoness is clear. Injectivity can be obtained 
by direct computation. 
\end{proof}

\begin{rem} The above proposition becomes more interesting 
when one considers more general groups $G$ -- the statement 
is literally the same. One deduces that $G$ acts properly 
on $\Xi$ (the action of $G$ on $TX$ is proper) and that 
$\Xi$ is contractible: $\Xi$ is a fiber bundle over 
$X=G/K\simeq \pf$ with convex fiber $\hat \Omega$. 
\end{rem} 

\subsubsection{The various boundaries of the crown} In this part 
we discuss the various boundaries of $\Xi$. First and foremost 
there is the topological boundary $\partial\Xi$ of $\Xi$ in 
$X_\C$. We will see that $\partial\Xi$ carries a natural 
structure of a cone bundle over the affine symmetric space $Y=G/H$. 
In particular $Y\subset \partial\Xi$ and $Y$ and we will show 
that $Y$ is some sort 
of Shilov boundary of $\Xi$ ( we will call it the 
{\it distinguished boundary} though).

\par We write $\qf$ for the tangent space of $Y$ at the base point 
$y_0=H\in Y$. Note that 
$$\qf = \R \underbrace{\begin{pmatrix} 1 & 1\\ -1 & -1\end{pmatrix}}
_{:={\bf e}}
\oplus 
\R \underbrace{\begin{pmatrix} 1 & -1\\ 1 & -1\end{pmatrix}}_{:={\bf f}}$$
is the decomposition of the $H$-module in eigenspaces. 
In particular, 
$$C:=\R_{\geq 0}{\bf e}\cup \R_{\geq 0}{\bf f}$$
is an $H$-invariant cone in $\qf$ and we can form the cone bundle 
$$\cC:=G\times_H C$$
inside of $TY$. 
\par We note that $Y$ is naturally realized in $X_\C$ via 
the map 
$$Y\to X_\C, \ \ gH\mapsto g(1, -1)\, , $$ 
i.e. $y_0$ identifies with $(1,-1)$. 

\begin{prop}\label{p=b}
$$\cC=G\times_H C \to \partial \Xi, \ \ [g,Z]\mapsto
g\exp(iZ)\cdot y_0$$
is a $G$-equivariant homeomorphism. 
\end{prop}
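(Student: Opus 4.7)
My approach is to match two stratifications. Under the $H$-action, $C$ decomposes as the apex $\{0\}$ together with the two open rays $\R_{>0}{\bf e}$ and $\R_{>0}{\bf f}$; dually, $\partial\Xi$ decomposes into three $G$-orbits, namely the distinguished boundary $Y = \{(z,w) : z,w\in\R\cup\{\infty\},\, z\neq w\}$, and the two ``mixed'' pieces $O_1$ (where only $z$ is real, $w \in \oline{\bf H}$) and $O_2$ (where only $w$ is real, $z \in {\bf H}$). Showing that $\Phi([g,Z]):= g\exp(iZ)\cdot y_0$ sends each $H$-stratum of $\cC$ bijectively and equivariantly onto the corresponding $G$-orbit of $\partial\Xi$, plus a continuity check at the apex, yields the homeomorphism.

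With ${\bf h}_0 = \bigl(\begin{smallmatrix} 0 & 1\\ 1 & 0 \end{smallmatrix}\bigr)$ spanning $\h$, a bracket computation gives $[{\bf h}_0,{\bf e}] = -2{\bf e}$ and $[{\bf h}_0,{\bf f}] = 2{\bf f}$, so $H^0$ acts on the two rays by positive scaling while $-I\in H$ acts trivially via $\Ad$; combined with $H\cdot y_0 = y_0$, the formula descends to $\cC$. Using ${\bf e}^2 = {\bf f}^2 = 0$, direct matrix computation yields
$$\exp(is{\bf e})\cdot y_0 = \bigl(\tfrac{1+2is}{1-2is},\, -1\bigr), \qquad \exp(is{\bf f})\cdot y_0 = \bigl(1,\, \tfrac{-1-2is}{1-2is}\bigr)\, ,$$
placing the ${\bf e}$-ray in $O_2$ and the ${\bf f}$-ray in $O_1$. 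From the product model $\Xi = {\bf H}\times\oline{\bf H}$, $\partial\Xi$ consists exactly of those pairs in $\overline\Xi$ with at least one coordinate in $\R\cup\{\infty\}$; by two-transitivity of $G$ on $\P^1(\R)$ together with transitivity of the parabolic stabilizer of a real point on ${\bf H}$, each of $Y$, $O_1$, $O_2$ is a single $G$-orbit. For stratum-wise bijectivity, one computes $\operatorname{Stab}_G(y_0) = H$ (direct linear algebra on $g\cdot(1,-1) = (1,-1)$) and $\operatorname{Stab}_H({\bf e}) = \operatorname{Stab}_H({\bf f}) = \{\pm I\} = \operatorname{Stab}_G(\exp(i{\bf e})\cdot y_0)$ (the compact stabilizer of a point of ${\bf H}$ meets the parabolic stabilizer of a real point only in the center). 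Consequently $\Phi$ restricts to the canonical isomorphism $G/H \xrightarrow{\sim} Y$ on the apex stratum and to $G$-equivariant bijections $G/\{\pm I\} \xrightarrow{\sim} O_2,\, O_1$ on the two ray strata, yielding a $G$-equivariant continuous bijection $\Phi:\cC\to\partial\Xi$.

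The main obstacle is continuity of $\Phi^{-1}$ across the apex, where the $H$-stabilizer of the cone representative jumps from $\{\pm I\}$ to all of $H$: a sequence $[g_n, s_n{\bf e}]$ with $s_n\to 0^+$ can be renormalized via the $H$-action to $[g_n\exp(-\tfrac{\log s_n}{2}{\bf h}_0),\,{\bf e}]$, whose first factor escapes in $G$ unless $g_n$ itself escapes in the compensating direction. I would rule out pathological behavior by a direct limit argument in the product model: convergence of $\Phi(\xi_n)$ to a point of $Y$ forces the imaginary part of the first coordinate of $g_n\cdot(\tfrac{1+2is_n}{1-2is_n},-1)$ to vanish in the limit, and combining this with convergence of the (real) second coordinate pins down $g_n$ modulo $H$ and forces $s_n\to 0$, giving the required convergence in $\cC$.
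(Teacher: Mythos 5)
Your orbit-matching and stabilizer computations are correct, and the stratum picture (apex $\leftrightarrow Y$, ${\bf e}$-ray $\leftrightarrow O_2$, ${\bf f}$-ray $\leftrightarrow O_1$) is a clean and genuinely different route from the paper's. The paper (deferring to \cite{KO}, Th.\ 3.1, and spelling things out in Appendix A) works in the hyperbolic model where $\partial_s\Xi=\{i{\bf y}:Q({\bf y})=-1\}$ and $\partial_n\Xi=\{{\bf x}+i{\bf y}:x_0>0,\,Q({\bf x})=0\}$ are explicit algebraic loci and the map can be written in closed form; your product-model decomposition is more conceptual and makes the three $G$-orbits and their stabilizers transparent without matrix bookkeeping. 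The formulas $\exp(is{\bf e})\cdot y_0=(\tfrac{1+2is}{1-2is},-1)$, $\exp(is{\bf f})\cdot y_0=(1,\tfrac{-1-2is}{1-2is})$, the stabilizer identifications $\operatorname{Stab}_G(y_0)=H$ and $\operatorname{Stab}_H({\bf e})=\{\pm I\}=\operatorname{Stab}_G(\exp(i{\bf e})\cdot y_0)$, and the transitivity of $G$ on each stratum of $\partial\Xi$ all check out. So you do obtain a $G$-equivariant continuous bijection.

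The gap is in the last paragraph, where you argue that $\Phi^{-1}$ is continuous at the apex. Knowing that $\Phi(\xi_n)\to p\in Y$ ``pins down $g_n$ modulo $H$ and forces $s_n\to 0$'' is not the same as convergence in $\cC=G\times_H C$: convergence of $[g_n,s_n{\bf e}]$ requires a choice of $h_n=\exp(t_n{\bf h}_0)\cdot(\pm I)\in H$ for which \emph{both} $g_nh_n$ converges in $G$ \emph{and} $\operatorname{Ad}(h_n^{-1})(s_n{\bf e})=s_ne^{2t_n}{\bf e}$ converges in $C$, and these two requirements are coupled through the single parameter $t_n$. Convergence of $g_nH$ in $G/H$ only guarantees the existence of $h_n$ making $g_nh_n$ bounded; it does not, on its own, control $s_ne^{2t_n}$ for that particular $h_n$. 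Equally, a sequence with $s_n$ bounded away from $0$ and $g_n\to\infty$ could a priori have $\Phi(\xi_n)$ converging into $Y$, and ruling this out is exactly the properness of $\Phi$ (equivalently that $\Phi$ is a closed map), which needs an actual argument: for instance, decompose $g_n=k_na_nk_n'$ via $KAK$ and track how $g_n(w_n)\in{\bf H}$ and $g_n(-1)\in\P^1(\R)$ must collide on the boundary unless the $H$-renormalization works, or exploit properness of $G$ on ${\bf H}$ in the sub-case $p\in O_1\cup O_2$ and treat the $Y$-limit separately. As written, ``I would rule out pathological behavior by a direct limit argument'' names the step but does not carry it out, and this step is the non-formal content of the proposition once bijectivity is in hand.
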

\begin{proof} Direct computation; see \cite{KO}, Th. 3.1 for 
details. 
\end{proof}

\begin{cor} $\pi_1(\partial\Xi)=\pi_1(G/H)=\Z\, .$
\end{cor}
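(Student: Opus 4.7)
The plan is to first use the cone-bundle description of $\partial\Xi$ to reduce $\pi_1(\partial\Xi)$ to $\pi_1(G/H)$, and then to compute the latter by realizing $Y$ explicitly inside $\P^1(\R)^2$.

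For the reduction, Proposition~\ref{p=b} identifies $\partial\Xi$ with $\cC = G\times_H C$. The cone $C = \R_{\geq 0}{\bf e}\cup \R_{\geq 0}{\bf f}\subset\qf$ is star-shaped at the origin, and the $H$-action on $\qf$ is linear, so the fiberwise radial contraction $v\mapsto (1-s)v$ is $H$-equivariant and assembles into a strong $G$-equivariant deformation retraction of $\cC$ onto its zero section $G\times_H\{0\} = G/H$. Hence $\pi_1(\partial\Xi) = \pi_1(G/H)$.

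For the identification of $Y$: under the embedding $Y\hookrightarrow X_\C$ recalled in the paper, the base point $y_0$ sits at $(1,-1)\in \P^1(\R)^2\setminus\diag$. A routine computation with the M\"obius action shows that $\Sl(2,\R)$ acts transitively on ordered pairs of distinct points of $\P^1(\R)$, so the orbit map furnishes a $G$-equivariant diffeomorphism $Y \cong \P^1(\R)^2\setminus\diag$.

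Finally, the first projection $\P^1(\R)^2\setminus\diag \to \P^1(\R) = S^1$ is a fiber bundle with fiber $\P^1(\R)\setminus\{\text{pt}\}\cong\R$; the section $p\mapsto (p,-1/p)$ (well-defined on $\P^1(\R)$ and never hitting the diagonal since $p^2=-1$ has no real solution) trivializes it as $S^1\times\R$. Hence $\pi_1(G/H)=\pi_1(S^1)=\Z$.

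The step requiring most care is the middle one: one must verify both that the $\Sl(2,\R)$-orbit of $(1,-1)$ fills $\P^1(\R)^2\setminus\diag$ and that its stabilizer in $G$ is exactly $H=\SO(1,1;\R)$, including the non-identity component (note $-I\in H$ acts trivially on $\P^1(\R)$, so the second component of $H$ does sit in the stabilizer). The cone-bundle retraction and the bundle trivialization are essentially formal once this identification is secure.
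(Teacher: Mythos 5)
Your proof is correct and complete; the paper states the corollary without an argument, treating it as an immediate consequence of Proposition~\ref{p=b} together with the known fact $\pi_1(G/H)=\Z$. Your first step --- the fiberwise radial contraction of the star-shaped $H$-invariant cone $C$, assembled into a $G$-equivariant strong deformation retraction of $G\times_H C$ onto the zero section $G/H$ --- is exactly what is implicitly being invoked, and you carry it out cleanly. For $\pi_1(G/H)=\Z$ you take a slightly different route from the one suggested elsewhere in the paper: you identify $Y = G\cdot y_0$ with $\P^1(\R)^2\setminus\diag$ inside the double-projective-line model of $X_\C$ and then exhibit it as a trivial $\R$-bundle over $S^1$ via the first projection and the section $p\mapsto(p,-1/p)$. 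The paper's Appendix on the hyperbolic model instead passes to $G=\mathrm{SO}_e(1,2)$ and realizes $\partial_d\Xi$ as the one-sheeted hyperboloid $\{\mathbf{y}\in\R^3 : y_1^2+y_2^2-y_0^2=1\}\cong S^1\times\R$, from which $\pi_1=\Z$ is equally immediate. Both are elementary; yours stays in the $\P^1$-coordinates used throughout the body of the paper, while the hyperboloid picture displays the cylinder structure directly by projecting to the $(y_1,y_2)$-plane. The point you rightly flag as needing care does check out: the condition $g(1)=1$, $g(-1)=-1$ for $g=\begin{pmatrix}a&b\\ c&d\end{pmatrix}\in\Sl(2,\R)$ forces $a=d$, $b=c$, $a^2-b^2=1$, hence $g=\pm\begin{pmatrix}\cosh t & \sinh t\\ \sinh t & \cosh t\end{pmatrix}$, and this two-component group is exactly $\SO(1,1;\R)$.
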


Henceforth we call write $\partial_d\Xi =G\cdot y_0\simeq Y$ 
and call $\partial_d\Xi$ the distinguished boundary 
of $\Xi$. Its relevance is as follows. Write
$\P(\Xi)$ for the cone of strictly plurisubharmonic functions
on $\Xi$ which extend continuously up to the boundary. 
A simple exercise in one complex variable then yields 
(cf.\ cite{GKI},  Th. 2.3). 

\begin{lem} For all $f\in \P(\Xi)$: 
$$\sup_{z\in \Xi} |f(z)|= \sup_{z\in \partial_d\Xi} |f(z)|\, .$$
\end{lem}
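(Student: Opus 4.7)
The approach is a Fubini-style iteration of the classical one-variable maximum principle, exploiting the product decomposition $\Xi = {\bf H} \times \overline{\bf H}$. Recall that $\partial_d \Xi = G \cdot (1,-1)$ consists concretely of pairs $(z,w) \in \P^1(\R) \times \P^1(\R)$ with $z \ne w$, i.e.\ the product of the two ideal boundaries with the diagonal removed. So the lemma should reduce cleanly to two successive applications of the maximum principle, one in each factor.

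First I would fix $w \in \overline{\bf H}$ and examine the slice $z \mapsto f(z,w)$. As the restriction of a plurisubharmonic function to a complex line, this slice is subharmonic on ${\bf H}$, and by hypothesis it extends continuously to $\P^1(\R)$ (with at most one puncture, at $z = w$ when $w$ happens to be real, controlled by the continuous extension). The Cayley transform identifies ${\bf H}$ with the unit disk, so the classical maximum principle yields
$$\sup_{z \in {\bf H}} f(z,w) = \sup_{z \in \P^1(\R)} f(z,w).$$
Taking the supremum over $w$ reduces the problem to $\sup_\Xi f = \sup_{\P^1(\R) \times \overline{\bf H}} f$. Running the identical argument in the second variable, now with $z \in \P^1(\R)$ fixed and the slice $w \mapsto f(z,w)$ subharmonic on $\overline{\bf H}$ and continuous up to $\P^1(\R)$, delivers $\sup_\Xi f = \sup_{\partial_d \Xi} f$.

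The only real wrinkle is the $|f|$ in the statement: plurisubharmonic functions are real-valued and only $f$ (not $-f$) obeys the maximum principle, so transferring the argument to $|f|$ requires that $|f|$ itself be subharmonic in each slice --- for instance when $f = |h|$ for a holomorphic $h$, the case relevant to the later automorphic applications, or equivalently when $\P(\Xi)$ is interpreted as a cone of moduli of holomorphic functions. Under either reading the iterated argument above applies verbatim to $|f|$, which is why the statement is advertised as a simple exercise in one complex variable. A minor topological point --- that the diagonal of $\P^1(\R)^2$ is excluded from $X_\C$ --- is harmless, since it has empty interior inside each slice and thus does not affect either application of the max principle.
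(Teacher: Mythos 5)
The paper offers no proof of this lemma; it is stated as ``a simple exercise in one complex variable'' with a pointer to \cite{GKI}, Th.~2.3. Your Fubini-style iteration of the disk maximum principle over the two factors of $\Xi = {\bf H}\times\overline{\bf H}$ is surely the intended exercise, and the outline is sound: you correctly recognize $\partial_d\Xi$ as $\P^1(\R)\times\P^1(\R)\setminus\diag$ and reach it from $\Xi$ in two one-variable steps, first in $z$ with $w$ held in the open lower half plane, then in $w$ with $z$ held in $\P^1(\R)$. Two of your glosses need sharpening. First, in the second step you take $z\in\P^1(\R)$, a boundary point, and assert that $w\mapsto f(z,w)$ is subharmonic on $\overline{\bf H}$; this is not immediate, since $f$ is only assumed plurisubharmonic on the open set $\Xi$. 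You should observe that $f(z,\cdot)$ is a locally uniform limit of the genuine interior slices $f(z_n,\cdot)$, $z_n\to z$ in ${\bf H}$ (locally uniform by continuity of $f$ up to the boundary), and that subharmonicity passes to such limits. Second, the diagonal puncture is not harmless merely because it has ``empty interior'': a subharmonic function on the disk \emph{can} violate the maximum principle when one boundary point is left uncontrolled. The right justification is that a single point is polar, hence a removable exceptional set for the maximum principle for subharmonic functions that are bounded above; boundedness must be read into the statement, since otherwise both suprema are $+\infty$ and the equality is vacuous. Your closing remark that $|f|$ is not subharmonic for a general psh $f$, so that the lemma really should be read for nonnegative plurisubharmonic $f$ or for moduli of holomorphic functions, is a fair and worthwhile catch.
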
 

The complement of the distinguished boundary of $\Xi$ we denote 
$\partial_u\Xi$, and refer to it as the unipotent boundary. 
A straightforward computation explains the
terminology: 

\begin{equation} \label{b=u} \partial_u\Xi =G\begin{pmatrix} 1 & i \\ 0 & 1\end{pmatrix}\cdot x_0 \amalg 
G\begin{pmatrix} 1 & -i \\ 0 & 1\end{pmatrix}\cdot x_0\, .\end{equation}

\subsection{Fine structure theory}

\subsubsection{The complex convexity theorem}\label{s=c}
We begin the  standard horospherical coordinates for $X$: 
the map 

$$ N\times A \to X, \ \ (n_x, a_{\sqrt y})\mapsto n_x a_{\sqrt{y}}\cdot i = x+iy$$
is an analytic diffeomorphism. Accordingly 
we obtain a map $a: X\to A$, the so-called  $A$-projection. 
Upon complexifying $X=NA\cdot x_0$ we obtain a Zariski-open 
subset 

$$ N_\C A_\C \cdot x_0\subsetneq X_\C\, .$$
Upon extending the map $a$ holomorphically we have to be more 
careful as the groups $A_\C$ and $K_\C$ intersect 
in the finite two-group 
$$M=A_\C \cap K_\C =\{ \pm {\bf 1}\}\, .$$
Accordingly the extension $a_\C$ is only 
valued mod $M$: 

$$a_\C : N_\C A_\C \cdot x_0\to A_\C/M\, .$$
The second part of the following proposition 
is of fundamental importance. 

\begin{prop}\label{p=h} The following assertions hold: 
\begin{enumerate}
\item $N_\C A_\C\cdot x_0=\C\times \C \bs\diag$, in other 
words $N_\C A_\C\cdot x_0$ is the affine open piece 
of $X_\C$. 
\item $\Xi\subset N_\C A_\C\cdot x_0$. 
\item The map $a_\C$, restricted to $\Xi$, admits 
a holomorphic logarithm $\log a_\C: \Xi \to \af_\C $ such that 
$\log a_\C (x_0)= 0$. 
\end{enumerate}
\end{prop}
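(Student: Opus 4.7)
The plan is to prove everything by direct computation using the model $X_\C \simeq \P^1(\C)\times\P^1(\C)\setminus\diag$ together with the explicit fractional linear action of $G_\C$. The three parts are logically nested: (i) is the parametrization, (ii) is a positivity observation, and (iii) is obtained by choosing a square root branch on the simply connected domain $\Xi$.

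First I would work out the horospherical coordinate map explicitly. For $g=n_z a_w$ one computes $g(i)=z+w^2 i$ and $g(-i)=z-w^2 i$, so
\[
N_\C A_\C\cdot x_0 \;\longrightarrow\; \P^1(\C)\times\P^1(\C)\setminus\diag,\qquad n_z a_w\cdot x_0 \;\longmapsto\; (z+w^2 i,\; z-w^2 i).
\]
To prove (i) I would invert this map: given $(u,v)\in\C\times\C$ with $u\neq v$, set $z=(u+v)/2$ and $w^2=(u-v)/(2i)$, which lies in $\C^*$. Any choice of square root $w$ works, and the two choices $\pm w$ differ by an element of $M=\{\pm\mathbf 1\}\subset K_\C$, so they define the same point in $X_\C$. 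This identifies $N_\C A_\C\cdot x_0$ with the affine piece $\C\times\C\setminus\diag$.

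For (ii) I would note that $\Xi=X\times\oline X$ consists of pairs $(u,v)$ with $\Im u>0$ and $\Im v<0$. In particular $u\neq v$, so $\Xi$ sits inside the affine open piece just described. The key additional observation, which will also power (iii), is that
\[
\frac{u-v}{2i} \;=\; -\frac{i(u-v)}{2}
\]
has real part $\tfrac12\Im(u-v)>0$ for $(u,v)\in\Xi$. Hence the function $w^2\colon\Xi\to\C^*$ defined by inverting horospherical coordinates takes values in the open right half plane $\{\Re\zeta>0\}$.

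For (iii) I would use that $\Xi=X\times\oline X$ is simply connected (a product of two contractible half-planes), and that $w^2$ avoids the slit $(-\infty,0]$. Therefore there is a unique holomorphic square root $w\colon\Xi\to\{\Re\zeta>0\}$, and a unique holomorphic branch of $\log w$ with $\log w = 0$ at the base point where $w^2 = (i-(-i))/(2i)=1$. Setting
\[
\log a_\C \;=\; \begin{pmatrix} \log w & 0 \\ 0 & -\log w \end{pmatrix}\colon \Xi \longrightarrow \af_\C
\]
produces the required holomorphic logarithm, and it satisfies $\log a_\C(x_0)=0$ by construction. The only point that requires care is the sign ambiguity in $w$ arising from $M\subset K_\C$; the positivity of $\Re(w^2)$ on $\Xi$ is exactly what eliminates it, so the true obstacle — that $a_\C$ is only defined mod $M$ — dissolves the moment one sees that $w^2$ lands in a right half-plane on the whole crown.
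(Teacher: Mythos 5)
Your proof is correct and follows essentially the same route as the paper: a direct parametrization of $N_\C A_\C\cdot x_0$ via horospherical coordinates for (i), the trivial observation that $\Im u>0>\Im v$ forces $u\neq v$ for (ii), and a branch argument for (iii). The one genuine addition over the paper's terse statement is your observation that $\zeta:=(u-v)/(2i)$ satisfies $\Re\zeta=\tfrac12\Im(u-v)>0$ throughout $\Xi$; this pins down a \emph{canonical} branch (principal square root, then principal logarithm) rather than appealing only to the abstract covering-space lift $\af_\C\to A_\C/M$ along a simply connected domain, which is all the paper invokes. In fact once you have the half-plane observation the appeal to simple connectedness of $\Xi$ is redundant — the principal branches work pointwise — so your version is slightly more elementary and more explicit, and it also makes the normalization $\log a_\C(x_0)=0$ transparent since $\zeta(x_0)=1$.
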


\begin{proof} (i) We observe that 
\begin{align*} N_\C A_\C \cdot x_0 &=\{ (iz +w, -iz +w)\mid z\in \C^*, w\in \C\} \\
&= \{ (z +w, -z +w)\mid z\in \C^*, w\in \C\} \\
&=\C\times \C \bs\diag\, .\end{align*}
\par\noindent (ii) is immediate from (i). 
\par\noindent (iii) follows from (ii) and the fact that $\Xi$ 
is simply connected. 
\end{proof}

\begin{rem} We wish to make a few remarks about the inclusion (ii) 
for more general groups. For classical groups (ii) was 
obtained in \cite{KSI} and \cite{GM} by somewhat explicit, although  
efficient, matrix computations. For general simple groups 
a good argument based on complex analysis was given in 
\cite{H1} and \cite{H2}. The method of \cite{H1} was later 
simplified and slightly generalized in \cite{M}. 
\end{rem}

From Proposition \ref{p=h}(i) we obtain the 
following

\begin{cor} $\left[\bigcap_{g\in G} g N_\C A_\C \cdot x_0\right]_0=
\Xi$, where $[\cdot]_0$ denotes the connected component of 
$[\cdot]$ containing $x_0$. 
\end{cor}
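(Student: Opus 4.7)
The plan is to compute the intersection explicitly using the concrete description of $X_\C$ as $\P^1(\C)\times\P^1(\C)\setminus\diag$, and then identify the connected component of $x_0 = (i,-i)$.

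First I would unpack what Proposition \ref{p=h}(i) says about the complement: since $N_\C A_\C\cdot x_0 = \C\times\C\setminus\diag$ is the affine open piece, its complement in $X_\C$ is the set $E = (\{\infty\}\times\C)\cup(\C\times\{\infty\})$. Because the identification $X_\C \simeq \P^1(\C)\times\P^1(\C)\setminus\diag$ is $G_\C$-equivariant and $G_\C$ acts diagonally on the product, translating by $g\in G$ yields $g\cdot E = (\{g(\infty)\}\times\P^1(\C))\cup(\P^1(\C)\times\{g(\infty)\})$ (intersected with $X_\C$). Consequently
$$X_\C\setminus\bigcap_{g\in G} g N_\C A_\C\cdot x_0 = \bigcup_{g\in G} g\cdot E = \{(z,w)\in X_\C : z\in G\cdot\infty \text{ or } w\in G\cdot\infty\}.$$

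Next I would note that $G=\Sl(2,\R)$ acts transitively on $\P^1(\R)=\R\cup\{\infty\}$ by fractional linear transformations, so $G\cdot\infty = \P^1(\R)$. Therefore
$$\bigcap_{g\in G} g N_\C A_\C\cdot x_0 = \{(z,w)\in X_\C : z,w\in\C\setminus\R\} = \bigl[(\C\setminus\R)\times(\C\setminus\R)\bigr]\setminus\diag.$$
Writing $\C\setminus\R = \H\sqcup\oline\H$, this set decomposes as the disjoint union of the four pieces $\H\times\H\setminus\diag$, $\H\times\oline\H$, $\oline\H\times\H$, and $\oline\H\times\oline\H\setminus\diag$. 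Each piece is open in the intersection, and each is connected (the first and last because the complement of a proper analytic subset in a connected complex manifold is connected; the middle two as products of connected sets). Hence these are the connected components.

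Finally, since $x_0$ corresponds under the embedding (\ref{e2}) to $(i,-i)\in\H\times\oline\H$, the component containing $x_0$ is precisely $\H\times\oline\H = X\times\oline X = \Xi$, which is the desired conclusion. The only subtle step is the identification of connected components; it requires the standard fact that a proper complex analytic subvariety does not disconnect, but everything else is a direct unwinding of the identifications.
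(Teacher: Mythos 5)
Your proof is correct, and it takes a genuinely different route from the paper's. You compute the full intersection explicitly: identifying the complement of $N_\C A_\C\cdot x_0$ as the ``lines at infinity,'' translating by $G$, and using $G\cdot\infty=\P^1(\R)$ to conclude that the intersection equals $\bigl[(\C\setminus\R)\times(\C\setminus\R)\bigr]\setminus\diag$; you then enumerate all four connected components and pick out the one containing $(i,-i)$. The paper, by contrast, never computes the intersection. It takes the (a priori unknown) component $D$ containing $x_0$, projects to the two factors to get $G$-invariant open subsets $D_1, D_2\subset\C$, observes that the only candidates are $X$, $\oline X$, or $X\cup\oline X$, and rules out the last two by connectedness of $D$ and by $i\in D_1$; the inclusion $\Xi\subset D$ (from Prop.~\ref{p=h}(ii)) then forces $D=\Xi$. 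Your approach buys a complete and concrete description of the whole intersection, and it also makes the openness of the intersection (and hence of its components) manifest, a point the paper leaves implicit; the paper's approach is shorter and avoids the explicit orbit computation $G\cdot\infty=\P^1(\R)$ and the topological fact that a proper analytic subvariety does not disconnect, at the cost of appealing to the classification of $G$-invariant open sets in $\C$. Both are valid; yours is more elementary and self-contained in the one-variable setting of $\Sl(2,\R)$, while the paper's argument is phrased so as to suggest the general pattern.
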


\begin{proof} Let $D:=\left[\bigcap_{g\in G} g N_\C A_\C \cdot x_0
\right]_0$. Write $D_1$, $D_2$ for the projection 
of $D$ to the first, resp. second,  factor in $[\C\times \C]\bs 
\diag$. Then $D_1\subset\C$ is $G$-invariant. 
Hence $D_1=X$, $D_1=\oline X$ or $D_1=X\cup \oline X$. 
The last case is excluded, as $D$ is connexted. The second 
case is excluded as $x_0\in D$ implies $i\in D_1$. 
Hence $D_1=X$. By the same reasoning 
one gets $D_2=\oline X$. As $\Xi\subset D$  we thus 
get $D=\Xi$.
\end{proof}

For an element $Y\in \af$ we note that the convex hull 
of the Weyl-group orbit of $Y$, in symbols 
$\operatorname{conv} (\W\cdot Y)$, is just the line segment $[-Y,Y]$. 
With that we turn to a deep geometric fact for crown domains, the 
{\it complex convexity theorem}: 

\begin{thm}\label{t=cc} For $Y\in \Omega$: 
$$\Im \log a_\C (K\exp(iY)\cdot x_0)= [-Y,Y]\, .$$
\end{thm}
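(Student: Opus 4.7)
The plan is to prove the theorem by an explicit computation in the affine model $X_\C\cong \C\times\C\setminus\diag$ of Proposition \ref{p=h}(i). In that model, a point of $N_\C A_\C\cdot x_0$ has the form $(u,v)=(iw^2+z,\,-iw^2+z)$, so $w^2=(u-v)/(2i)$, and hence on $\Xi$ we may take
\begin{equation*}
\log a_\C(u,v)=\tfrac12\log\!\left(\frac{u-v}{2i}\right)H,\qquad H:=\diag(1,-1),
\end{equation*}
the branch being singled out by $\log a_\C(x_0)=0$, as guaranteed by Proposition \ref{p=h}(iii). For $Y=xH\in\Omega$ one has $\exp(iY)\cdot x_0=(ie^{2ix},-ie^{2ix})$.

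Next I would let $K$ act by the standard rotations $k_\theta$ and compute the difference of the two coordinates of $k_\theta\exp(iY)\cdot x_0$ using the elementary identity $f(z_1)-f(z_2)=\frac{(ad-bc)(z_1-z_2)}{(cz_1+d)(cz_2+d)}$ for a M\"obius transformation $f(z)=(az+b)/(cz+d)$. Since $\det k_\theta=1$ the denominator factors cleanly and one finds
\begin{equation*}
u-v=\frac{2ie^{2ix}}{\cos^2\theta+\sin^2\theta\,e^{4ix}}.
\end{equation*}
Combining with the formula for $\log a_\C$ above yields
\begin{equation*}
\Im\log a_\C(k_\theta\exp(iY)\cdot x_0)=\Bigl(x-\tfrac12\arg D(s)\Bigr)H,\qquad D(s):=(1-s)+se^{4ix},
\end{equation*}
with $s=\sin^2\theta\in[0,1]$.

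The remaining task is to show that $\arg D(s)$ sweeps out monotonically the interval between $0$ and $4x$ as $s$ runs over $[0,1]$. Geometrically, $s\mapsto D(s)$ is the straight-line chord joining the two unit-circle points $1$ and $e^{4ix}$; since $|4x|<\pi$ this chord avoids the origin, and the inscribed-angle picture already suggests the answer. The cleanest way to make it rigorous is the factorization
\begin{equation*}
D(s)=e^{2ix}\bigl[\cos(2x)+i(2s-1)\sin(2x)\bigr],
\end{equation*}
which shows $\arg D(s)=2x+\arctan\!\bigl((2s-1)\tan(2x)\bigr)$, manifestly monotone in $s$ on $[0,1]$ with endpoint values $0$ and $4x$. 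Hence $x-\tfrac12\arg D(s)$ sweeps monotonically over $[-x,x]$, establishing $\Im\log a_\C(K\exp(iY)\cdot x_0)=[-Y,Y]$. The only conceptual subtlety is controlling the branch of the logarithm globally on the whole $K$-orbit; the above factorization bypasses that issue by exhibiting $D(s)$ as a bounded curve inside a sector of aperture strictly less than $\pi$.
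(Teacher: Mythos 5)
Your proof is correct and follows the same basic approach as the paper: an explicit computation of $a_\C$ on the orbit $K\exp(iY)\cdot x_0$. The paper simply cites the closed form for $a_\C$ from \cite{KSI}, Prop.\ A.1(i) and asserts the result follows; you derive that formula from scratch in the $(\C\times\C)\setminus\diag$ model (I checked $z^2=e^{2ix}/(\cos^2\theta+\sin^2\theta\,e^{4ix})$ against the Iwasawa decomposition and it is right) and, more usefully, you supply the step the paper leaves to the reader: the factorization $D(s)=e^{2ix}\bigl[\cos 2x+i(2s-1)\sin 2x\bigr]$, which both fixes the branch of the logarithm along the whole $K$-orbit and exhibits $\arg D(s)$ as strictly monotone on $[0,1]$ with endpoints $0$ and $4x$, giving the interval $[-Y,Y]$ exactly.
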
 
  
\begin{proof} Direct computation. For $G=\Sl(2,\R)$ there is an 
explicit formula for $a_\C$: with  
$k_\theta=\begin{pmatrix} \cos\theta & \sin \theta\\ -\sin\theta& \cos \theta
\end{pmatrix}\in K$ one has 
$$a_\C \left(k_\theta \begin{pmatrix} e^{i\phi} & 0 \\ 0 & e^{-i\phi} \end{pmatrix}\right)
= a_z$$
with 
$$z = \sqrt{ e^{2i\phi} +\sin^2 \theta (e^{-2i\phi} + e^{2i\phi})}\, , $$
see \cite{KSI}, Prop. A.1 (i). From that the assertion follows. 
For the general case we refer to \cite{GKII} for the inclusion "$\subset$" and 
to \cite{KOt} for actual equality.  
\end{proof}

\subsubsection{Realization in the complexified 
Cartan decomposition}\label{s=cd}
The Cartan or polar decomposition of $X$ says that the map 
$$K/M\times A\to X, \ \ (kM,a)\mapsto ka\cdot x_0$$
is onto with faithful restriction to $K/M\times A^+$. Here, as usual 
$$A^+=\{ a_t \mid t>1\}\, .$$  
Thus 
$$X=KA\cdot x_0$$
and we wish to complexify this equality. We have to be a little more 
careful here, as $K_\C A_\C \cdot x_0$ is no longer a domain (it fails
to be open at the base point $x_0$). 
The remedy comes from a little bit of invariant theory. 
We note that   $X_\C$ is an affine variety and write 
$\C[X_\C]$ for its ring of regular function. We denote by 
$\C[X_\C]^{K_\C}$ for the subring of regular 
function. According to Hilbert, the invariant ring 
is finitely generated, i.e.  
$$\C[X_\C]^{K_\C}=\C[p]\, .$$
In order to describe $p$ we use a different 
realization of $X_\C$, namely 
$$X_\C=\Sym(2,\C)_{\det=1}\, .$$
In this model the generator $p$ is given by 
$$p: X_\C \to \C, \ \ z\mapsto \operatorname{tr} z\, .$$
For a symmetric, i.e. $\W$-invariant,  open segment 
$\omega \subset \Omega$ we define a $K_\C$-invariant 
domain $X_\C(\omega)\subset X_\C$ by 

$$X_\C(\omega)= p^{-1}(p(A\exp(i\omega)\cdot x_0))\, .$$
We note that 
\begin{itemize}
\item $K_\C A\exp(i\omega)\cdot x_0\subset X_\C(\omega)$
\item $\exp(i\omega')\cdot x_0\not\subset X_\C (\omega)$ if 
$\omega\subsetneq \omega'$. 
\end{itemize}
Hence we may view $X_\C(\omega)$ as the $K_\C$-invariant 
open envelope of $K_\C A\exp(i\omega)\cdot x_0$ in $X_\C$. 
The main result here is as follows: 

\begin{thm} \label{th=coca}For all open symmetric segments 
$\omega\subset\Omega$ one has 
$$G\exp(i\omega)\cdot x_0\subset X_\C (\omega)\, .$$
In particular 
$$\Xi\subset X_\C(\Omega)\, .$$
\end{thm}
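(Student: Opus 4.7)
The plan is to reduce the containment to a scalar statement about the $K_\C$-invariant generator $p = \operatorname{tr}$ of $\C[X_\C]^{K_\C}$. Since $X_\C(\omega) = p^{-1}(p(A\exp(i\omega)\cdot x_0))$ is $K_\C$-stable by construction, it suffices to verify
\[
p(g\exp(iY)\cdot x_0) \in p(A\exp(i\omega)\cdot x_0) \qquad \text{for all } g \in G, \ Y \in \omega.
\]
I would argue by direct matrix calculation in the model $X_\C = \Sym(2,\C)_{\det=1}$ of Subsubsection~\ref{s=cd}, in which $x_0 = I$ and $g \cdot M = gMg^T$.

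For $g = \begin{pmatrix} a & b \\ c & d\end{pmatrix} \in G$ and $Y = \diag(y,-y) \in \omega$ one computes
\[
p(g\exp(iY)\cdot x_0) = \operatorname{tr}(g\exp(2iY)g^T) = (a^2+c^2)\, e^{2iy} + (b^2+d^2)\, e^{-2iy}.
\]
Setting $P = a^2+c^2$ and $Q = b^2+d^2$, the Brahmagupta--Fibonacci identity combined with $ad-bc = 1$ gives $PQ = (ab+cd)^2 + (ad-bc)^2 = (ab+cd)^2 + 1 \geq 1$. Parametrising $\omega = (-\rho,\rho)$ with $\rho \in (0,\pi/4]$, the target set becomes $p(A\exp(i\omega)\cdot x_0) = 2\cosh(\{w \in \C : |\Im w| < 2\rho\})$. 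The one-complex-variable input I need is to identify this $\cosh$-image with $2R_\rho$, where
\[
R_\rho := \bigl\{u+iv : u > 0,\ u^2/\cos^2(2\rho) - v^2/\sin^2(2\rho) > 1\bigr\}
\]
(with the degenerate convention $R_{\pi/4} = \{u+iv : u > 0\}$); this follows because the horizontal line $\Im w = y'$ maps under $\cosh$ onto the right branch of the hyperbola with foci $\pm 1$ and semi-axes $(\cos y', \sin y')$, and these confocal branches are nested and fill out precisely $R_\rho$ as $|y'|$ runs over $[0,2\rho)$.

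To conclude, I would write
\[
Pe^{2iy} + Qe^{-2iy} = 2\sqrt{PQ}\,\cosh(w), \qquad w := \tfrac{1}{2}\log(P/Q) + 2iy,
\]
which satisfies $|\Im w| = 2|y| < 2\rho$, so $\cosh(w) \in R_\rho$. The final observation is that $R_\rho$ is invariant under dilation by any $\lambda \geq 1$, immediate from $\lambda^2\bigl(u^2/\cos^2(2\rho) - v^2/\sin^2(2\rho)\bigr) > \lambda^2 \geq 1$. Applied with $\lambda = \sqrt{PQ}$, this yields $\sqrt{PQ}\cosh(w) \in R_\rho$, hence $p(g\exp(iY)\cdot x_0) \in 2R_\rho = p(A\exp(i\omega)\cdot x_0)$, as required. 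The main obstacle I anticipate is the explicit confocal-hyperbola description of the $\cosh$-image together with the dilation-stability $\lambda R_\rho \subseteq R_\rho$; these are elementary plane-geometric facts but benefit from careful verification. The $\Sl(2,\R)$-specific feature enabling so clean a proof is Brahmagupta's identity producing $PQ \geq 1$; in higher rank the analogous control should come from the full strength of the complex convexity theorem~\ref{t=cc}.
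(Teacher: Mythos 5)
Your proof is correct, and it is a genuinely different route from what the paper invokes. The paper gives no self-contained argument here; it simply cites external references --- a symplectic refinement of the complex convexity theorem for the $\Sl(2,\R)$ case and a separate convexity-of-orbits paper for general $G$. Your argument, by contrast, is entirely elementary and self-contained. The computation $p(g\exp(iY)\cdot x_0) = (a^2+c^2)e^{2iy} + (b^2+d^2)e^{-2iy}$ is the same one the paper uses later (in the lemma preceding Theorem~\ref{t=he}) for the \emph{negative} result about $2\Omega$, but you turn it into a proof of the \emph{positive} inclusion by two observations the paper does not make explicit: the Brahmagupta--Fibonacci identity forces $PQ = (ab+cd)^2 + (ad-bc)^2 \geq 1$, and the hyperbolic region $R_\rho = \{u>0,\ u^2/\cos^2(2\rho) - v^2/\sin^2(2\rho) > 1\}$ is stable under dilation by any $\lambda \geq 1$ (crucially because its defining inequality is $>1$ and not $>0$). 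Writing $Pe^{2iy}+Qe^{-2iy} = 2\sqrt{PQ}\cosh\!\left(\tfrac12\log(P/Q)+2iy\right)$ with $\sqrt{PQ}\geq 1$ then closes the argument. What the paper's route buys is generality --- the symplectic/convexity machinery carries over to arbitrary semisimple $G$, whereas your $PQ\geq 1$ trick is special to $\Sl(2,\R)$, as you correctly flag. What your route buys is transparency: it makes visible exactly which scalar inequality is controlling the containment, and replaces two references by a half-page of plane geometry of confocal conics. The image identification $\cosh(\{|\Im w|<2\rho\}) = R_\rho$ that you worry about is indeed correct (each point with $u>0$, $v\neq 0$ lies on exactly one right branch of the confocal family, by monotonicity of $\theta\mapsto u^2/\cos^2\theta - v^2/\sin^2\theta$), and the degenerate convention at $\rho=\pi/4$ checks out as well, since $\Re\cosh(w)>0$ on the strip $|\Im w|<\pi/2$ and positivity of the real part is dilation-invariant.
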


\begin{proof} For $G=\Sl(2,\R)$ this was established in 
\cite{KOt2}; in general in \cite{K1}. 
\end{proof}

\section{Holomorphic extension of representations}

I want to explain a few things 
on representations first.
For the beginning $G$ might be any connected unimodular Lie group, 
for simplicity even contained in its universal complexification $G_\C$. 
By a {\it unitary representation} of $G$ we understand 
a group homomorphism
$$\pi: G\to U(\H)$$
from $G$ into the unitary group of some complex Hilbert
space $\H$ such that for all $v\in \H$ the orbit maps
$$f_v: G\to \H, \ \ g\mapsto \pi(g)v$$
are continuous.   
We call a vector $v\in \H$ {\it analytic} if 
$f_v$ is a real analytic $\h$-valued map. The entity 
of all analytic vectors of $\pi$ is denoted by 
$\H^\omega$ and we observe that $\H^\omega$ is a 
$G$-invariant vector space. 
The following result was obtained by Nelson; the idea is 
already found in the approximation theorem of 
Weierstra\ss. 

\begin{lem} $\H^\omega$ is dense in $\H$. 
\end{lem}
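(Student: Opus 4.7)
The plan is to use Nelson's smoothing trick: produce an analytic approximate identity on $G$ and convolve $v$ against it. Fix a basis $X_1,\dots,X_n$ of $\gf$, form the associated left-invariant Laplacian $\Delta=-\sum X_i^2$, and let $p_t\in C^\omega(G)$ ($t>0$) be the associated heat kernel. The relevant properties are standard: $p_t\geq 0$, $\int_G p_t\,dg=1$, $p_t\ast f\to f$ for $f$ continuous, and, crucially, $p_t$ extends to a holomorphic function on a tubular neighborhood $U_t$ of $G$ in $G_\C$ with integrable bounds on each compact subtube.

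With such a $p_t$ in hand, define the smoothed vector
$$v_t:=\int_G p_t(g)\,\pi(g)v\,dg\in\H.$$
By continuity of the orbit map $f_v$ together with concentration of $p_t$ near $e\in G$, one gets $v_t\to v$ in $\H$ as $t\downarrow 0$. Thus it suffices to show $v_t\in\H^\omega$ for every $t>0$, after which density follows since $v$ is arbitrary.

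For the analyticity of $f_{v_t}$, rewrite
$$\pi(h)v_t=\int_G p_t(h^{-1}g)\,\pi(g)v\,dg$$
and, using the holomorphic extension of $p_t$, define for $z$ in a complex neighborhood of $h$ in $G_\C$
$$F(z):=\int_G p_t(z^{-1}g)\,\pi(g)v\,dg.$$
I would then show $F$ is weakly holomorphic in $z$ by a standard Morera/differentiation-under-the-integral argument: on a small enough polydisc about $h$, the integrand is holomorphic in $z$ with an $L^1(G)$-majorant in $g$ independent of $z$. Restricting $F$ to $G$ recovers $f_{v_t}$, hence $f_{v_t}$ is real-analytic near $h$; as $h$ was arbitrary, $v_t\in\H^\omega$.

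The main obstacle is the construction of the analytic approximate identity with a holomorphic extension having good $L^1$-bounds on a uniform complex tube: real-analyticity of $p_t$ itself is immediate from hypoellipticity, but the tube estimates required to pass derivatives through the integral need genuine Gaussian-type control on the extension. This is the real content of the lemma; once it is in place the Lebesgue-theoretic bookkeeping above is routine.
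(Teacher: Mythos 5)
Your proposal is the same argument the paper gives: Nelson's smoothing via the heat kernel $\rho_t$ of a left-invariant Laplacian, used as an analytic Dirac sequence so that $\Pi(\rho_t)v\in\H^\omega$ converges to $v$. The paper likewise treats the heat-kernel properties (analyticity and Gaussian decay) as a black box from parabolic PDE theory, so you have merely spelled out the Morera/differentiation-under-the-integral step that the paper leaves implicit.
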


\begin{proof} (Sketch) We first recall that 
with $\pi$ comes a Banach-$*$-representation $\Pi$ of 
the group algebra $L^1(G)$ given by 
$$\Pi(f)v =\int_G f(g) \pi(g) v \ dg \qquad (f\in L^1(G), v\in\H) $$
with $dg$ a  Haar-measure.
For a Dirac-sequence $(f_n)_{n\in\N}$ in $L^1(G)$ one 
immediately verifies that 
\begin{equation}\label{e=d} \Pi(f_n)v \to v \end{equation}
for all $v\in\H$. 
We choose a good Dirac sequence: Fix a left invariant 
Laplace operator on $G$ and write $\rho_t$ for the corresponding 
heat kernel. We use the theory of parabolic PDE's as black box 
and just state: 

\begin{itemize} \item $\rho_t\in L^1(G)$ for all $t>0$,  
\item $\rho_t$ is analytic and of Gau\ss{}ian decay, 
\item $(\rho_{1/n})_{n\in\N}$ is a Dirac-sequence.    
\end{itemize}
As a result $\Pi(\rho_t)v \in \H^\omega$ and 
$$\lim_{t\to 0^+} \Pi(\rho_t)v = v\qquad (v\in\H)$$
by (\ref{e=d}). 
\end{proof}

Let us now sharpen the assumptions on $G$ and $\pi$. In the next step 
we request: 

\begin{itemize} 
\item $G$ is semisimple. 
\item $\pi$ is irreducible. 
\end{itemize}

Harish-Chandra observed that screening the representation $\pi$ under 
a maximal compact subgroup $K<G$ is meaningful. 
He introduced 
the space of $K$-finite vectors:
$$\H_K=\{ v\in \H\mid \operatorname{span}_\C \{ \pi(K)v\}\  \hbox {is finite dim.}\}\, $$

Observe that $\H_K$ is dense in $\H$ by the theorem of Peter and Weyl. 
Harish-Chandra made a key-observation: 

\begin{lem} $\H_K\subset\H^\omega$. 
\end{lem}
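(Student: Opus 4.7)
The plan is to combine the heat-kernel construction of analytic vectors from the previous lemma with Harish-Chandra's admissibility theorem. First I would replace $\rho_t$ by its $K$-conjugation average
$$\tilde\rho_t(g) := \int_K \rho_t(kgk^{-1})\, dk,$$
which still enjoys the three bulleted properties (integrability, analyticity with Gau\ss{}ian decay, and being a Dirac sequence as $t\to 0^+$), so that $\Pi(\tilde\rho_t)v\in\H^\omega$ and $\Pi(\tilde\rho_t)v\to v$ for every $v\in\H$. The new feature gained by averaging is that the conjugation-invariance $\tilde\rho_t(kgk^{-1})=\tilde\rho_t(g)$ forces $\Pi(\tilde\rho_t)$ to commute with $\pi(k)$ for every $k\in K$, hence to preserve every $K$-isotypic subspace $\H(\tau)\subset\H$.

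Next I would invoke Harish-Chandra's admissibility theorem: for an irreducible unitary representation of a semisimple Lie group each isotypic component $\H(\tau)$ is finite-dimensional. Fix $v\in\H_K$; then $v$ meets only finitely many $K$-types $\tau_1,\ldots,\tau_r$, and
$$W:=\H(\tau_1)\oplus\cdots\oplus\H(\tau_r)$$
is a finite-dimensional $\pi(K)$-invariant subspace containing $v$, and by the previous step $\Pi(\tilde\rho_t)$ maps $W$ into $W$.

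Because $W$ is finite-dimensional, the pointwise convergence $\Pi(\tilde\rho_t)w\to w$ for $w\in W$ upgrades to operator-norm convergence $\Pi(\tilde\rho_t)|_W\to \operatorname{id}_W$, so $\Pi(\tilde\rho_t)|_W$ is invertible for all sufficiently small $t>0$. For such $t$ set $w:=\bigl(\Pi(\tilde\rho_t)|_W\bigr)^{-1} v\in W\subset\H$; then $v=\Pi(\tilde\rho_t)w$, and the previous lemma immediately yields $v\in\H^\omega$.

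The main obstacle is the admissibility input: Harish-Chandra's finite-multiplicity theorem for irreducible unitary representations of semisimple groups is itself a substantial statement which would have to be cited rather than re-proved at this point. An alternative route would be to exploit that the Casimir of $\gf$ acts by a scalar on $\H_K$ (via Schur's lemma applied to the underlying irreducible Harish-Chandra module) and then deduce analyticity from elliptic regularity for a suitable Laplace-type operator on $G$, but this strategy ultimately leans on the same circle of representation-theoretic facts.
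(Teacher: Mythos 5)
Your proof is correct, and it takes a noticeably different route from the one sketched in the paper, even though both ultimately rest on the same two ingredients: the heat-kernel Dirac sequence $(\rho_t)$ producing analytic vectors, and Harish-Chandra admissibility (finite dimensionality of each $K$-isotypic subspace $\H(\tau)$). The paper's argument is topological: it invokes that $\H^\omega$, as a locally convex space of compact type, is sequentially complete, applies the Peter--Weyl theorem to the $K$-action on $\H^\omega$ to get density of $K$-finite \emph{analytic} vectors in $\H^\omega$ and hence in $\H$, and then concludes (implicitly via admissibility) that $\H(\tau)\subset\H^\omega$ for each $K$-type $\tau$. You avoid the topology of $\H^\omega$ entirely; instead you $K$-average the heat kernel so that $\Pi(\tilde\rho_t)$ commutes with $\pi(K)$ and therefore preserves each finite-dimensional isotypic block, and then close the argument with the elementary fact that strong convergence of operators to the identity on a finite-dimensional space is operator-norm convergence, hence invertibility for small $t$. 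Your version is more concrete and self-contained, and it makes the admissibility hypothesis visible, whereas the paper leaves it unstated; on the other hand the paper's route exposes structural facts about $\H^\omega$ (compact type, sequential completeness, Peter--Weyl on $\H^\omega$) that are useful later in the text, e.g.\ when $\H^{-\omega}$ is introduced. One small point worth recording in a write-up: you should note that conjugation by $K$ extends holomorphically and that $K$ is compact, so that $\tilde\rho_t$ inherits the analyticity and Gau\ss{}ian decay of $\rho_t$; with a left-invariant Laplacian the heat kernel need not be conjugation-invariant to begin with, which is precisely why the averaging step is needed.
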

\begin{proof} The following sketch of proof is non-standard. 
We will use a little bit of functional analysis. It is 
known that $\H^\omega$ is a locally convex vector space 
of compact type. As such it is sequentially complete. This makes 
the Peter-Weyl-Theorem for the representation of $K$ on 
$\H^\omega$ applicable. In particular the $K$-finite 
vectors in $\H_K^\omega$ in $\H^\omega$ are dense in $\H^\omega$.
Apply the previous Lemma combined with the density of 
$\H_K$ in $\H$.  
\end{proof}

The upshot of our discussion is that $\H_K$ is the vector space 
consisting of the best possible analytic vectors. It is a module 
of countable dimension for the Lie algebra $\gf$ and as such irreducible.

\par Given $v\in \H_K$ we consider the real analytic orbit map 
$$f_v: G\to \H, \ \ g\mapsto\pi(g)v$$  
and ask the following : 

\bigskip\noindent {\it Question: What is the natural domain 
$D_v\subset G_\C $ to which $f_v$ extends holomorphically?}

\bigskip It turns out that $D_v$ does only depend on the 
type of the representation $\pi$ but not on the specific 
vector $v\neq 0$ (this is reasonable as $v$ generates $\H_K$ as 
a $\gf$-module). We will give this classification 
in the subsection below. At this point we only remark that 
the domain $D_v$ is naturally left $G$-invariant and right $K_\C$-invariant, 
in symbols: 
$$D_v= GD_vK_\C\, .$$
 
\par A little bit more terminology is good for the purpose 
of the discussion. We write 
$$q: G_\C \to X_\C, \ \ g\mapsto gK_\C$$
for the canonical projection and for a domain $D\subset X_\C$ we write 
$$DK_\C =q^{-1}(D)$$
for the pre-image of $D$ in $G_\C$.

To get a feeling for that I want to discuss
one class of examples first. 

\subsection{The spherical principal series} For the rest of 
this section we return to our basic setup: $G=\Sl(2,\R)$. 

\par We fix a parameter $\lambda\in \R$, let $\H=L^2(\R)$ and declare 
a unitary representation $\pi_\lambda$ of $G$ on $\H$ via 
\begin{equation} \label{(6.1)} [\pi_\lambda(g)f] (x) = |cx+d|^{-1 +i\lambda} 
f\left( {ax+b\over cx +d}\right) \end{equation}  
for $g^{-1}=\begin{pmatrix} a & b\\ c & d \end{pmatrix}$, $f\in \H$ and 
$x\in\R$.     
In the literature one finds $\pi_\lambda$ under the term 
{\it spherical unitary  principal series}. 
This representation is {\it $K$-spherical}, i.e. the space of $K$-fixed 
vectors $\H^K$ is non-zero. 
More precisely, $\H^K =\C v_K$ with 
$$v_K(x)={1\over \sqrt{\pi}}\cdot  {1\over (1 +x^2)^{{1\over 2}(1 -i\lambda)}}$$
being a normalized representative. 
With $v_K$ we form the matrix coefficient 

$$\phi_\lambda(g):=\langle \pi_\lambda(g) v_K, v_K\rangle \qquad (g\in G)\, .$$
The function $\phi_\lambda$ is $K$-invariant from both sides, in particular 
descends to an analytic function on $X=G/K$, also denoted by $\phi_\lambda$. 
We record the integral representation for $\phi_\lambda$: 

$$\phi_\lambda(x)=\int_K a(kx)^{\rho(1 + i\lambda)} \ dk \qquad (x\in X)$$
where $dk$ is a normalized Haar measure on $X$, and the other 
notation standard too: for $\mu\in\af_\C^*$ and $a\in A$ we let 
$a^\mu:=e^{\mu(\log a)}$ and $\rho\in \af^*$ is fixed by 
$\rho\begin{pmatrix} 1 & 0 \\ 0 & -1\end{pmatrix}=1$.  
Now in view of Proposition \ref{p=h}(iii), this implies that 
$\phi_\lambda$ extends to a holomorphic function on $\Xi$ 
given by 
$$\phi_\lambda(z)= \int_K a_\C(kz)^{\rho(1 + i\lambda)} \ dk \qquad (z\in \Xi)\, .$$
With a little bit of functional analysis one then gets that 
the orbit map $f_{v_K}$ extends holomorphically to $\Xi K_\C$. 
Since $\H_K={\mathcal U}(\gf_\C)v_K$ we thus deduce 
that $f_v$ extends to $\Xi K_\C$ for all $v\in \H_K$. 
For $v\neq 0$, this is actually a maximal domain, but that would 
require more work. We summarize the discussion: 

\begin{prop} Let $\pi_\lambda$ be a unitary spherical principal 
series, then for all $v\in \H_K$, the orbit map 
$f_v: G\to \H$ extends to a holomorphic function on $\Xi K_\C$. 
\end{prop}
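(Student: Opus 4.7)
My plan proceeds in three stages: first extend the scalar matrix coefficients $\langle \pi_\lambda(\cdot)v_K, w\rangle$ holomorphically to $\Xi$ as $w$ ranges over $\H_K$; then upgrade to a holomorphic $\H$-valued extension of $f_{v_K}$ by a weak-to-strong holomorphy argument; and finally propagate from $v_K$ to arbitrary $v\in\H_K$ via the Lie algebra action.

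\emph{Stage 1 (scalar matrix coefficients).} For $w\in\H_K$ the matrix coefficient $c_w(g):=\langle \pi_\lambda(g)v_K,w\rangle$ is right $K$-invariant (since $v_K$ is $K$-fixed) and so descends to $X$. Decomposing $w$ into its finitely many $K$-isotypic components and running the derivation of $\phi_\lambda(x) = \int_K a(kx)^{\rho(1+i\lambda)}\,dk$ against a $K$-matrix-coefficient kernel yields a Poisson-type formula
\[
c_w(x) \;=\; \int_K a(kx)^{\rho(1+i\lambda)}\, h_w(k)\, dk \qquad (x\in X),
\]
with $h_w\in C(K)$ a finite sum of matrix coefficients of $K$-representations. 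By Proposition~\ref{p=h}(iii), $z\mapsto a_\C(kz)^{\rho(1+i\lambda)}$ is holomorphic on $\Xi$ for each fixed $k\in K$ and jointly continuous in $(k,z)$; as $K$ is compact, the integral defines a holomorphic extension $\tilde c_w\colon \Xi\to\C$ of $c_w$.

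\emph{Stage 2 (from scalar to vector-valued).} The complex convexity theorem~\ref{t=cc} controls $|a_\C(kz)^{\rho(1+i\lambda)}|$ locally uniformly on $\Xi$: for $z = g\exp(iY)\cdot x_0$ with $g\in G$ and $Y\in\Omega$, one bounds the modulus by $\sup_{Y'\in[-Y,Y]} e^{\rho(Y')}$. This gives, on every compact $Q\subset\Xi$, a uniform estimate $|\tilde c_w(z)|\le C_Q\|w\|$ for $z\in Q$ and $w\in\H_K$. Hence for each $z\in\Xi$ the map $w\mapsto \tilde c_w(z)$ extends to a bounded conjugate-linear functional on $\H$; Riesz representation yields a vector $\tilde f_{v_K}(z)\in\H$ with $\langle \tilde f_{v_K}(z), w\rangle = \tilde c_w(z)$. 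The function $\tilde f_{v_K}\colon \Xi\to\H$ is then weakly holomorphic, and by Dunford's theorem strongly holomorphic. Pulling it back along the projection $q\colon \Xi K_\C\to\Xi$ and using right $K$-invariance of $f_{v_K}$ to check compatibility on $G\subset \Xi K_\C$ delivers the desired extension of $f_{v_K}$.

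\emph{Stage 3 (from $v_K$ to arbitrary $v\in\H_K$).} By irreducibility of the $(\gf_\C,K)$-module $\H_K$ and cyclicity of $v_K$, any $v\in\H_K$ has the form $v = d\pi_\lambda(D)v_K$ for some $D\in\U(\gf_\C)$. Let $L_D$ be the left-invariant holomorphic differential operator on $G_\C$ attached to $D$; then $f_v = L_D f_{v_K}$ on $G$, and $L_D(\tilde f_{v_K}\circ q)$ furnishes the holomorphic extension of $f_v$ to $\Xi K_\C$.

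\emph{Main obstacle.} The crux is the estimate in Stage~2: the uniform local bound $|\tilde c_w(z)|\le C_Q\|w\|$ on compacta $Q\subset\Xi$. Theorem~\ref{t=cc} is precisely the ingredient needed to dominate the non-unitary character $e^{\rho(1+i\lambda)\log a_\C}$ once $\log a_\C$ acquires an imaginary part on $\exp(i\Omega)$; once this bound is in hand, Stages~1 and~3 are essentially formal.
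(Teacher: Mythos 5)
Your three-stage plan is sound and, modulo a minor mis-attribution of what drives Stage~2, correctly fills in the ``little bit of functional analysis'' that the paper leaves unspecified. The paper's own proof extends only the zonal spherical function $\phi_\lambda = c_{v_K}$ via Proposition~\ref{p=h}(iii), then invokes functional analysis to get the $\H$-valued extension, and finally uses $\H_K = \U(\gf_\C)v_K$; you instead extend every $K$-finite matrix coefficient $c_w$, which then feeds a transparent Riesz--Dunford upgrade to vector-valued holomorphy. This is a perfectly valid and arguably more self-contained way to carry out the paper's sketch (the alternative the paper likely has in mind is the reproducing-kernel/GNS construction applied to the positive definite kernel $(z,w)\mapsto\phi_\lambda$ of the complexified doubling identity, which needs only $\phi_\lambda$ itself). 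Your Stage~3 is exactly the paper's last sentence.

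The one thing to correct is the emphasis placed on Theorem~\ref{t=cc}. For a compact $Q\subset\Xi$ the set $KQ\subset\Xi$ is compact, and $a_\C^{\rho(1+i\lambda)} = e^{\rho(1+i\lambda)\log a_\C}$ is holomorphic (hence continuous) on all of $\Xi$ by Proposition~\ref{p=h}(iii) alone; so $\sup_{k\in K,\, z\in Q} |a_\C(kz)^{\rho(1+i\lambda)}| < \infty$ is automatic, and with $\|h_w\|_{L^1(K)}\le\|h_w\|_{L^2(K)}\asymp\|w\|$ this already gives $|\tilde c_w(z)|\le C_Q\|w\|$. The complex convexity theorem is not needed for this qualitative local bound --- indeed, it controls only $\Im\log a_\C$ along $K\exp(i\Omega)\cdot x_0$, whereas the modulus of $a_\C^{\rho(1+i\lambda)}$ involves $\rho(\Re\log a_\C) - \lambda\,\rho(\Im\log a_\C)$, so you would also need a bound on the real part, which in any case comes from compactness. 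Theorem~\ref{t=cc} becomes essential when one wants quantitative control near $\partial\Xi$ (for instance in the growth estimates of the later sections), but the holomorphic logarithm of Proposition~\ref{p=h}(iii) is the actual crux here.
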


\begin{rem} Observe that the above proposition implies that 
$\phi_\lambda$ extends holomorphically to $\Xi$. 
\end{rem}

\subsection{A complex geometric classification of $\hat G$}

\subsubsection{More geometry} Before we turn to the subject proper 
we have to introduce two more geometric objects. 
We define two $G$-invariant domains in $X_\C$ by 

\begin{align*}\Xi^+&= X \times \P^1(\C)\bs \diag\, ,\\
\Xi^-&= \P^1(\C) \times \oline X \bs \diag\, .
\end{align*}

We immediately observe that both $\Xi^+$ and $\Xi^-$ feature 
the following properties: 

\begin{itemize} 
\item $G$ acts properly on $\Xi^+$ and $\Xi^-$, 
\item Both $\Xi^+$ and $\Xi^-$ are maximal $G$-domains
in $X_\C$ with proper actions,  
\item Both $\Xi^+$ and $\Xi^-$ are Stein,  
\item $\Xi^+\cap \Xi^-=\Xi$. 
\end{itemize}

In terms of structure theory one can define $\Xi^+$ and $\Xi^-$ as 
follows. Let us denote by $Q^\pm $ the stabilizer of $\pm i $ in $G_\C$. 
Note that $Q^\pm= K_\C \rtimes P^\pm $ with 
$$P^\pm=\left \{ \begin{pmatrix} 1 + z & \mp i z\\ \mp iz& 1 -z\end{pmatrix}  \mid z\in\C\right\}\, .$$
We easily obtain: 
\begin{lem} The following assertions hold: 
\begin{enumerate} 
\item $\Xi^+ K_\C= G K_\C P^+$, 
\item $\Xi^- K_\C = G K_\C P^-$.
\end{enumerate}
\end{lem}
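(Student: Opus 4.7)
The plan is to exploit the concrete model $X_\C \simeq \P^1(\C)\times\P^1(\C)\setminus\diag$ together with the fact that $Q^+$ is by definition the $G_\C$-stabilizer of $i\in\P^1(\C)$, and to work with the first factor projection.

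First, I would observe that the first-coordinate projection
\[
\pi_1: X_\C\to \P^1(\C),\qquad gK_\C\mapsto g(i),
\]
is $G_\C$-equivariant and, under the identification $X_\C\subset \P^1(\C)\times\P^1(\C)$, satisfies $\Xi^+=\pi_1^{-1}(X)$. (The omitted diagonal costs nothing: if $g\in G_\C$ then $g$ is an invertible M\"obius transformation, so $g(i)\neq g(-i)$ automatically, hence $(g(i),g(-i))\notin\diag$ for any $g\in G_\C$.)

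Next I would translate the membership condition into group theory. Since $X=G\cdot i$ (as $G=\Sl(2,\R)$ acts transitively on the upper half plane with stabilizer $K$), we have the chain
\[
gK_\C\in \Xi^+ \iff g(i)\in X \iff g(i)\in G\cdot i \iff g\in G\cdot\mathrm{Stab}_{G_\C}(i)=GQ^+.
\]
Taking preimages under $q:G_\C\to X_\C$ this reads
\[
\Xi^+K_\C=q^{-1}(\Xi^+)=GQ^+.
\]

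Finally I would invoke the semidirect product decomposition $Q^+=K_\C\ltimes P^+$ stated in the paper (which is easily verified: a direct multiplication shows $P^+$ fixes $i$, and $K_\C$ fixes $i$ as well; the dimension count $\dim_\C Q^+=2=\dim K_\C+\dim P^+$ and a trivial check that $K_\C\cap P^+=\{e\}$ seal the decomposition). This yields
\[
\Xi^+K_\C=GQ^+=GK_\C P^+,
\]
proving (i). Assertion (ii) is obtained by the identical argument, replacing $i$ by $-i$, $X$ by $\oline X$, and $\pi_1$ by the second-factor projection $\pi_2$. There is no real obstacle here; the only subtlety is making sure that the ``$\setminus\diag$'' does not contribute an extra condition, which is resolved by the observation above that $G_\C$ acts on $\P^1(\C)$ freely enough to avoid the diagonal.
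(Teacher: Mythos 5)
Your proof is correct. The paper gives no argument for this lemma (it is introduced with ``We easily obtain:''), and your reduction to the orbit description $gK_\C\in\Xi^+\iff g(i)\in X\iff g\in G\,\mathrm{Stab}_{G_\C}(i)=GQ^+$, together with the set-theoretic decomposition $Q^+=K_\C P^+$, is precisely the natural computation the author is implicitly invoking. Your remark that the $\setminus\diag$ condition is automatic because each $g\in G_\C$ acts bijectively on $\P^1(\C)$ is a worthwhile detail to spell out, and the verification that $Q^\pm$ is indeed the point stabilizer (so that $K_\C\subset Q^\pm$ and the quotient map $q$ behaves as claimed) is exactly what makes the argument go through.
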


\subsubsection{The classification theorem}

In this section $(\pi,\H)$ denotes an irreducible unitary representation of $G$. 
We call $\pi$ a {\it highest weight}, resp. {\it lowest weight}, representation if 
$\operatorname{Lie}(P^+)$, resp. $\operatorname{Lie}(P^-)$,  acts finitely
on $\H_K$. 
We state the main result (cf. \cite{KO} for $\Sl(2,\R)$ and \cite{K2}
in general).

\begin{thm} \label{t=m}Let $(\pi, \H)$ be a unitary irreducible representation of 
$G$. Let $0\neq v\in \H_K$ be a $K$-finite vector. 
Then a maximal $G\times K_\C$-invariant domain $D_v$ 
to which 
$$f_v: G\to \H, \ \ g\mapsto \pi(g)v$$
extends as a holomorphic function is given as follows: 
\begin{enumerate} 
\item $G_\C$, if $\pi$ is the trivial representation; 
\item $\Xi^+ K_\C$,  if $\pi$ is a non-trivial highest weight representation; 
\item $\Xi^- K_\C$, if $\pi$ is a non-trivial lowest weight representation;   
\item $\Xi K_\C$ in all other cases. 
\end{enumerate}
\end{thm}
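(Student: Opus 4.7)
The plan is to prove, in each case, two inclusions: first that $f_v$ does extend to the claimed domain, and second that no strictly larger $G\times K_\C$-invariant domain works. As a preliminary reduction, observe that $D_v$ depends only on the class of $\pi$: if $f_v$ extends holomorphically to a $G\times K_\C$-invariant domain $D$, then so does $f_{\pi(X)v}$ for every $X\in\gf_\C$ by right-differentiation, and $\U(\gf_\C)v=\H_K$ by irreducibility of the Harish-Chandra module. Write $D_\pi$ for this common domain, so that the theorem can be read purely at the level of representation classes.

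For the extension direction, case (i) is trivial since $f_v$ is constant. In the remaining cases, one first extends $f_v$ to $\Xi K_\C$ along the lines of the spherical principal series proposition above: for $v,w\in \H_K$ the matrix coefficient $c_{v,w}(g)=\langle \pi(g)v,w\rangle$ is a joint eigenfunction of $Z(\U(\gf_\C))$ and, using Proposition~\ref{p=h}(iii), may be expressed as a $K$-integral involving the holomorphic $A_\C$-projection. The complex convexity theorem~\ref{t=cc} controls this integral and delivers a holomorphic extension of $c_{v,w}$ to $\Xi$; a standard matrix-coefficient-to-orbit-map argument using $K$-finiteness upgrades this to a holomorphic extension of $f_v$ itself to $\Xi K_\C$. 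In cases (ii)/(iii) the additional hypothesis of local finiteness of $\operatorname{Lie}(P^\pm)$ on $\H_K$ integrates the $P^\pm$-action to a holomorphic action on each finite-dimensional $\operatorname{Lie}(P^\pm)$-stable subspace; combining this with the identity $\Xi^\pm K_\C = GK_\C P^\pm$ from the preceding lemma promotes the $\Xi K_\C$-extension to an extension on all of $\Xi^\pm K_\C$.

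The maximality direction is the crux and the hardest step of the proof. Suppose $f_v$ extends to a $G\times K_\C$-invariant domain $D\supsetneq D_\pi$ and pass to $D/K_\C\subset X_\C$. Because $\Xi^+$ and $\Xi^-$ are maximal among $G$-invariant domains in $X_\C$ on which $G$ acts properly, with $\Xi^+\cap\Xi^-=\Xi$, the extension must cross the unipotent boundary (\ref{b=u}) into $\Xi^+\setminus\Xi$ or $\Xi^-\setminus\Xi$ (or beyond). Moving a base point by $G$-equivariance, the extended $f_v$ becomes holomorphic on a complex neighbourhood of a translate of $i\R$ along a one-parameter subgroup $\exp(sX)$ with $X\in\operatorname{Lie}(P^+)$. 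Since $\pi$ is unitary, the real orbit map $s\mapsto\pi(\exp sX)v$ is bounded in norm by $\|v\|$; a Phragm\'en--Lindel\"of type bound on its holomorphic extension, combined with $K$-finiteness of $v$, forces $\operatorname{Lie}(P^+)$ to act locally finitely on $v$ and hence, by the reduction above, on all of $\H_K=\U(\gf_\C)v$ --- i.e.\ $\pi$ is of highest weight. Symmetrically, extension past $\Xi^-$ forces lowest weight, and extension to all of $G_\C$ forces both, so $\pi$ is finite-dimensional and hence, by unitarity, trivial. This representation-theoretic rigidity step --- converting \emph{holomorphic extension past the unipotent boundary} into \emph{$\operatorname{Lie}(P^\pm)$ acts locally finitely} --- is where the technical weight of the proof lies; the extension half, by contrast, reduces to the convexity theorem~\ref{t=cc} once the matrix-coefficient machinery is in place.
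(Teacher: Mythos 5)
Your extension direction is in line with the paper: the matrix-coefficient argument via the holomorphic $A_\C$-projection and Theorem~\ref{t=cc} is exactly how $\Xi K_\C$-extension is obtained, and the integration of the locally finite $\operatorname{Lie}(P^\pm)$-action (via $\Xi^\pm K_\C = GK_\C P^\pm$) is how the paper gets cases (ii)/(iii). The divergence --- and the problem --- is in your maximality argument.

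The paper's maximality mechanism is Theorem~\ref{t1} (a domain of holomorphic extension, modulo $K_\C$, necessarily carries a proper $G$-action), which is deduced from the Riemann--Lebesgue lemma for representations (Lemma~\ref{lem=hm}). Combined with the geometric fact that $\Xi^\pm$ are the \emph{maximal} $G$-invariant domains in $X_\C$ with proper action, this immediately caps cases (ii) and (iii) at $\Xi^\pm K_\C$, with no need to analyze what happens at the boundary representation-theoretically. For case (iv), the paper does \emph{not} argue representation-theoretically at all: it reduces to the spherical principal series and appeals to Theorem~\ref{t=he}, the statement that spherical functions $\phi_\lambda$ cannot be holomorphically continued past $\Xi$, proved by explicit computation with the $K_\C$-invariant $p$ and a degenerating curve in $G$ pushing $p$ toward $-2$.

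Your replacement --- a Phragm\'en--Lindel\"of bound on the holomorphically continued orbit map forcing $\operatorname{Lie}(P^+)$ to act locally finitely --- is not in the paper, and as sketched it has two gaps. First, the geometric step is unjustified: crossing $\partial_u\Xi$ at $g\begin{pmatrix}1&\pm i\\0&1\end{pmatrix}\cdot x_0$ puts you on an $N_\C$-line, not a $P^+$-line, and the translation to ``holomorphic on a neighbourhood of a translate of $i\R$ along $\exp(sX)$, $X\in\operatorname{Lie}(P^+)$'' needs an argument; the paper sidesteps this entirely because Theorem~\ref{t1} works on the level of sequences in $G$, with no need to locate a special line. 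Second, and more seriously, ``boundedness plus holomorphic extension to a strip implies $\operatorname{Lie}(P^+)$ acts locally finitely'' is asserted but not proved, and it is precisely the kind of rigidity that would require the full force of the analysis in \cite{KO} and \cite{K2}. Finally, your treatment of maximality in cases (ii)/(iii) is incomplete: you rule out extension to all of $G_\C$, but you must rule out \emph{any} $G\times K_\C$-invariant domain strictly larger than $\Xi^+ K_\C$, not merely $G_\C$. The paper's proper-action criterion handles this automatically, since any such intermediate domain modulo $K_\C$ would still have to carry a proper action and would therefore be contained in $\Xi^+$; your mechanism has no analogue of this step.
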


It is our desire to explain how to prove this theorem. 
We found out that there is an intimate relation of this 
theorem with proper actions of $G$ on $X_\C$. 

\subsubsection{Proper actions and representations}
The material in this section is taken from \cite{KO}, Section 4. It 
holds for a general semisimple group. 
We begin with a simple reformulation of the Riemann-Lebesgue 
Lemma for representations. 

\begin{lemma} \label{lem=hm}Let $(\pi, \H)$ be a unitary representation
of $G$ which does not contain
the trivial representation. Then $G$ acts properly
on $\H -\{0\}$.
\end{lemma}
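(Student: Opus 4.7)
The plan is to derive properness directly from the vanishing of matrix coefficients at infinity, which is the "Riemann--Lebesgue" statement alluded to in the paper: for a unitary representation $(\pi,\H)$ of a semisimple $G$ with no trivial subrepresentation, every matrix coefficient $g\mapsto\langle\pi(g)v,w\rangle$ tends to $0$ as $g\to\infty$ in $G$. For $G=\Sl(2,\R)$ this is a classical consequence of the Howe--Moore theorem for irreducibles, extended to the given $\pi$ by disintegration into irreducibles (no summand is trivial, so each contributes a coefficient that vanishes at infinity, and dominated convergence handles the integral).

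Granting this, properness on $\H\setminus\{0\}$ is proved by contradiction. Suppose $G$ does not act properly on $\H\setminus\{0\}$. Then there is a compact set $C\subset\H\setminus\{0\}$ and a sequence $(g_n)\subset G$ with $g_n\to\infty$ such that $\pi(g_n)C\cap C\neq\emptyset$ for every $n$. Pick $v_n,w_n\in C$ with $\pi(g_n)v_n=w_n$, and, using compactness of $C$, extract subsequences with $v_n\to v$ and $w_n\to w$ in $C$. In particular $v,w\in C\subset\H\setminus\{0\}$, so $\|w\|^2>0$.

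Now use unitarity: $\|\pi(g_n)v-w_n\|=\|v-v_n\|\to 0$, whence $\pi(g_n)v\to w$ in $\H$ and therefore
$$\langle\pi(g_n)v,w\rangle\longrightarrow \langle w,w\rangle=\|w\|^2>0\,.$$
This contradicts the vanishing at infinity of the matrix coefficient $g\mapsto\langle\pi(g)v,w\rangle$ guaranteed by the Riemann--Lebesgue property applied to $\pi$.

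The only non-routine step is the vanishing of matrix coefficients at infinity for the (not necessarily irreducible) representation $\pi$, and this is where the hypothesis that $\pi$ contains no trivial subrepresentation is essential; everything else is a clean compactness-plus-unitarity argument. Once Howe--Moore is cited and the passage from irreducibles to $\pi$ via direct integral is noted, the lemma follows immediately.
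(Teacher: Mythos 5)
Your proof is correct and takes essentially the same route as the paper: take a compact $C$, extract converging subsequences $v_n\to v$, $w_n=\pi(g_n)v_n\to w$, use unitarity to transfer the convergence to $\pi(g_n)v\to w$, and observe that $\langle\pi(g_n)v,w\rangle\to\|w\|^2>0$ contradicts the decay of matrix coefficients. The only difference is that you spell out why the Riemann--Lebesgue (Howe--Moore) property extends from irreducibles to the given $\pi$ via direct integral, whereas the paper simply cites the vanishing of matrix coefficients as a black box.
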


\begin{proof} Let $C\subset \H -\{0\}$ be a compact subset
and $C_G=\{ g\in G\mid \pi(g)C\cap C\neq \emptyset\}$.
Suppose that $C_G$ is not compact. Then there exists
a sequence $(g_n)_{n\in \N}$ in $C_G$ and a
sequence $(v_n)_{n\in \N}$ in $C$ such that
$\pi(g_n)v_n \in C$ and $\lim_{n\to \infty} g_n =\infty$.
As $C$ is compact we may assume that $\lim_{n\to\infty} v_n =v$ and
$\lim_{n\to\infty} \pi(g_n)v_n =w$ with $v,w\in C$.
We claim that
\begin{equation} \label{hm}
\lim_{n\to\infty} \langle \pi(g_n)v, w\rangle \neq 0\, . \end{equation}

In fact $\|\pi(g_n) v_n -\pi(g_n)v\|=\| v_n -v\|\to 0$
and thus $\pi(g_n)v\to w$ as well. As $w\in C$, it follows
that $w\neq 0$ and our claim is established.
\par Finally we observe that (\ref{hm}) contradicts
the Riemann-Lebesgue lemma for representations which asserts
that the matrix coefficient vanishes at infinity.
\end{proof}

{}From Lemma \ref{lem=hm} we deduce the following
result.

\begin{thm}\label{t1} Let $(\pi, \H)$ be an irreducible unitary
representation of $G$ which is not trivial. Let $v\in \H_K$,
$v\neq 0$,
be a $K$-finite vector. Let $\tilde D$ be a 
$G\times K_\C$-invariant domain in $G_\C$ with respect to
the property that the
orbit map $F_v: G\to \H, \ \ g\mapsto \pi(g)v$
extends to a $G$-equivariant holomorphic map
$\tilde \Xi\to \H$. Then $G$ acts properly on
$\tilde D/ K_\C \subset X_\C$.
\end{thm}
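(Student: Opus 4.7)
The plan is to reduce properness on $\tilde D/K_\C$ to properness of $G$ on $\H\setminus\{0\}$ (Lemma \ref{lem=hm}) via the $G$-equivariant holomorphic extension $F:\tilde D\to\H$ of $F_v$. The first key ingredient is nonvanishing of $F$: if $F(\tilde g_0)=0$ for some $\tilde g_0\in\tilde D$, then by left $G$-equivariance $F$ vanishes on the orbit $G\tilde g_0\subset\tilde D$, which is a maximal totally real submanifold of $G_\C$ (since $\gf+i\gf=\gf_\C$ and $\gf\cap i\gf=\{0\}$). The identity principle, applied to each matrix coefficient $g\mapsto\langle F(g),\xi\rangle$ with $\xi\in\H$, then forces $F\equiv 0$ on the connected domain $\tilde D$, contradicting $F(e)=v\neq 0$.

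The second ingredient is a $K_\C$-equivariance identity. Let $V:=\operatorname{span}_\C\pi(K)v\subset\H_K$; it is finite-dimensional and carries a holomorphic extension $\rho:K_\C\to\operatorname{GL}(V)$ of $\pi|_K$. For $v'=\sum c_i\pi(k_i)v\in V$, set $F_{v'}(g):=\sum c_iF(gk_i)$; this is the unique holomorphic extension of the orbit map of $v'$ to $\tilde D$, and the argument of the previous paragraph shows that $F_{v'}$ is nowhere zero for $v'\neq 0$. On $G\times K$ the equality $F(gk)=\pi(g)\pi(k)v=\pi(g)\rho(k)v=F_{\rho(k)v}(g)$ holds; both sides extend holomorphically to the connected complex manifold $\tilde D\times K_\C$ and agree on the maximal totally real submanifold $G\times K$, so analytic continuation yields
\begin{equation}
F(gk)=F_{\rho(k)v}(g),\qquad g\in\tilde D,\ k\in K_\C.
\end{equation}

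For the properness argument, assume toward contradiction that there exist $x_n\to x_\infty$ in $\tilde D/K_\C$ and $h_n\to\infty$ in $G$ with $h_nx_n\to y_\infty\in\tilde D/K_\C$. Using local sections of the principal $K_\C$-bundle $\tilde D\to\tilde D/K_\C$, choose lifts $\tilde x_n\to\tilde x_\infty$ and $\tilde y_n\to\tilde y_\infty$ in $\tilde D$, and write $\tilde y_n=h_n\tilde x_n k_n$ with $k_n\in K_\C$. Set $w_n:=F_{\rho(k_n)v}(\tilde x_n)$. The $G$-equivariance of $F_{v'}$ (inherited from $F_{v'}(hg)=\pi(h)F_{v'}(g)$ on $G\times G$ by analytic continuation in $g$), combined with the key identity, gives
\begin{equation}
\pi(h_n)w_n=F_{\rho(k_n)v}(h_n\tilde x_n)=F(h_n\tilde x_nk_n)=F(\tilde y_n)\to F(\tilde y_\infty)\neq 0.
\end{equation}
Since $\pi(h_n)$ is unitary, $\|w_n\|=\|F(\tilde y_n)\|$ is bounded and bounded away from $0$.

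The main obstacle is showing that $w_n$ itself converges to a nonzero limit, which could fail a priori if $\rho(k_n)v$ were to escape to infinity or collapse to $0$ in $V$. The remedy is that the linear maps $\Psi_n:V\to\H$, $v'\mapsto F_{v'}(\tilde x_n)$, converge in operator norm (by $\dim V<\infty$) to $\Psi_\infty:v'\mapsto F_{v'}(\tilde x_\infty)$, which is injective because each $F_{v'}$ is nowhere vanishing. Hence $\Psi_n$ is uniformly bi-Lipschitz onto its image for large $n$, and the two-sided bound on $\|w_n\|=\|\Psi_n(\rho(k_n)v)\|$ confines $\rho(k_n)v$ to a compact subset of $V\setminus\{0\}$. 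Passing to a subsequence, $\rho(k_n)v\to v_*\neq 0$, so $w_n\to\Psi_\infty(v_*)\neq 0$. Now both $w_n$ and $\pi(h_n)w_n$ lie in a compact subset of $\H\setminus\{0\}$, and Lemma \ref{lem=hm} forces $h_n$ to stay bounded, contradicting $h_n\to\infty$.
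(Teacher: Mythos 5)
Your proof is correct and takes essentially the same route as the paper's: argue by contradiction, reduce to Lemma~\ref{lem=hm}, establish nonvanishing of the holomorphically extended orbit map via the identity theorem on a totally real $G$-orbit, and exploit the finite-dimensionality of $V=\operatorname{span}_\C\pi(K)v$ to confine the $K_\C$-translates $\rho(k_n)v$ to a compact annulus, so that both $w_n$ and $\pi(h_n)w_n$ lie in a compact subset of $\H\setminus\{0\}$. Your operator-norm argument with the maps $\Psi_n$ is just a more explicit rendering of the paper's claim~(\ref{eq=bou}) and its use of $\pi(z_n)|_V\to\pi(z)|_V$.
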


\begin{proof} We argue by contradiction and assume that
$G$ does not act properly on $D=\tilde D/K_\C$.
We obtain sequences $(z_n')_{n\in \N}\subset D$ and
$(g_n)_{n\in \N} \subset G$ such that
$\lim_{n\to\infty} z_n' =z'\in D$, $\lim_{n\to\infty} g_n z_n' =w'\in D$ and
$\lim_{n\to\infty} g_n=\infty$. We select preimages $z_n$, $z$ and $w$ of
$z_n'$, $z'$ and $w'$ in $\tilde D$. We may assume
that $\lim_{n\to\infty} z_n=z$ and find a sequence $(k_n)_{n\in \N}$ in $K_\C$
such that $\lim_{n\to\infty} g_n z_n k_n = w$.
\par Before we continue we claim that
\begin{equation}\label{eq=nz} (\forall z\in \tilde D)\qquad \pi(z)v\neq 0
\end{equation}
In fact assume $\pi(z)v=0$ for some $z\in \tilde D$. Then
$\pi(g)\pi(z)v=0$ for all $g\in G$. In particular
the map $G\to \H, \ \ g\mapsto \pi(g)v$ is constantly
zero. However this map extends to a holomorphic map
to a $G$-invariant neighborhood in $G_\C$. By the
identity theorem for holomorphic functions this
map has to be zero as well. We obtain a contradiction
to $v\neq 0$ and our claim is established.

\par Write $V= {\rm span} \{\pi(K)v\}$ for the finite dimensional
space spanned by the $K$-translates of $v$.
In our next step we claim that

\begin{equation}\label{eq=bou} (\exists c_1, c_2>0) \qquad  c_1 < \|\pi(k_n)v\|< c_2\, .
\end{equation}
In fact from
$$\lim_{n\to\infty} \pi(g_nz_nk_n)v = \pi(w)v\quad \hbox{and}\quad
\|\pi(g_n z_n k_n)v\|=\|\pi (z_n) \pi(k_n)v\|$$
we conclude with
(\ref{eq=nz}) that
there are positive constants $c_1',c_2'>0$ such that
$c_1'<\|\pi (z_n) \pi(k_n)v\|<c_2'$ for all $n$.
We use that $\lim_{n\to\infty} z_n =z\in \tilde D$ to obtain
$\pi(z_n)|_V-\pi(z)|_V\to 0$ and our claim
follows.

\par We define $C$ to be the closure of the sequences
$(\pi(z_nk_n)v)_{n\in \N}$ and $(\pi(g_nz_nk_n)v)_{n\in \N}$
in $\H$.
With our previous claims (\ref{eq=nz}) and (\ref{eq=bou}) we obtain
that $C\subset \H -\{0\}$ is a compact subset. But
$C_G=\{g\in G\mid \pi(g) C\cap C\neq \emptyset\}$ contains
the unbounded sequence $(g_n)_{n\in \N}$ and hence is
not compact - a contradiction to Lemma \ref{lem=hm}.
\end{proof}

\subsubsection{Remarks on the proof of Theorem \ref{t=m}}
We are going to discuss the various cases in the Theorem. 
\par \bigskip  {\it Case 1: $\pi$ is trivial.} This is clear.
\par \bigskip  {\it Case 2: $\pi$ is a non-trivial highest weight representation.} 
In this case all orbit maps 
$f_v: G\to \H$ of $K$-finite vectors $v$ extend to 
$GK_\C P^+$. As $GK_\C P^+/K_\C =\Xi^+$ and $\Xi^+\subset X_\C $ is maximal 
for proper $G$-action, the assertion follows from 
Theorem \ref{t1}. 
\par \bigskip {\it Case 3: $\pi$ is a non-trivial lowest weight representation.}
Argue as in case 2. 
\par \bigskip {\it Case 4: The remaining cases. } Here we restrict ourselves 
to spherical principal series $\pi_\lambda$. 
We have already seen that $D_v \supset \Xi K_\C$. The remaining 
inclusion will follow from the following Theorem, cf. \cite{GKO} Th. 5.1. 

\begin{thm}\label {t=he} The crown is a maximal $G$-invariant domain
on $X_\C$ to which a spherical function $\phi_\lambda$, $\lambda\in \R$,
extends holomorphically. 
\end{thm}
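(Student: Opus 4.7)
The plan is a proof by contradiction: assume $\phi_\lambda$ extends holomorphically to a connected $G$-invariant domain $D\subseteq X_\C$ with $\Xi\subsetneq D$, and derive a contradiction.

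First I lift the scalar extension to a vector-valued extension of the orbit map $f_{v_K}$. From $\phi_\lambda(g)=\langle \pi_\lambda(g) v_K, v_K \rangle$ and the cyclicity $\H_K=\U(\gf_\C) v_K$, every $K$-finite matrix coefficient $\langle \pi_\lambda(\cdot) u, w \rangle$ is obtained from $\phi_\lambda$ by applying a left- and right-invariant differential operator from $\U(\gf_\C)$. Hence every such matrix coefficient extends holomorphically to $DK_\C$. A routine functional-analytic argument---weak holomorphy on the dense $K$-finite subspace $\H_K$ together with local uniform boundedness inherited from the unitary action on $G$---then upgrades this to strong holomorphy of an extension $f_{v_K}:DK_\C\to\H$.

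Now Theorem \ref{t1}, applied to the non-trivial irreducible unitary $\pi_\lambda$, forces $G$ to act properly on $D$, so every isotropy $\operatorname{Stab}_G(p)$ for $p\in D$ is compact. The distinguished boundary $\partial_d\Xi=G\cdot y_0\simeq G/H$ has non-compact isotropy $H=\SO(1,1;\R)$, hence $D\cap\partial_d\Xi=\emptyset$. Since $D$ is connected and strictly contains $\Xi$, it must therefore cross the unipotent boundary $\partial_u\Xi=G n_i\cdot x_0\amalg G n_{-i}\cdot x_0$ of (\ref{b=u}); its two components lie in $\Xi^+$ and $\Xi^-$ respectively. Crossing into the $\Xi^+$-component forces the extension to reach $\Xi^+ K_\C=GK_\C P^+$, so the power series
$$\pi_\lambda(\exp(tX))\,v_K=\sum_{n\ge 0}\frac{t^n}{n!}\pi_\lambda(X)^n v_K \qquad (X\in\pf^+)$$
converges in $\H$ for complex $t$ near $0$. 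This requires $\pf^+$ to act locally finitely on $\H_K$, i.e., $\pi_\lambda$ is a highest weight representation---absurd for a $\lambda\in\R$ spherical principal series. The symmetric argument rules out the $\Xi^-$-component. Thus $D=\Xi$, contrary to the assumption.

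The main obstacle is the unipotent step. The distinguished boundary is excluded cleanly by non-compact isotropy, but unipotent orbits have finite stabilizers, so properness alone does not suffice. One must convert the geometric datum ``$f_{v_K}$ extends across $\partial_u\Xi$ into $\Xi^\pm K_\C$'' into the representation-theoretic statement ``$\pf^\pm$ acts locally finitely on $\H_K$'', which requires analyzing the extended orbit map along one-parameter subgroups tangent to the complex abelian parabolic $P^\pm\subset G_\C$ at the base point and extracting sharp estimates on $\|\pi_\lambda(X)^n v_K\|$ for $X\in\pf^\pm$.
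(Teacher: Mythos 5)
Your proposal takes a genuinely different route from the paper's, and it is incomplete at the decisive step; you acknowledge this yourself in the final paragraph, so let me be precise about where the hole is and how the paper avoids it.

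Your treatment of the distinguished boundary is clean and in fact more conceptual than the paper's: lift $\phi_\lambda$ to the orbit map on $D K_\C$, apply Theorem~\ref{t1} to get properness of the $G$-action on $D$, and note that $\partial_d\Xi=G\cdot y_0$ has non-compact isotropy $H=\SO(1,1)$, so $D\cap\partial_d\Xi=\emptyset$. That part is correct and would indeed generalize. The difficulty is exactly where you flag it: from ``$D$ is a connected $G$-invariant open set, $\Xi\subsetneq D$, $D\cap\partial_d\Xi=\emptyset$'' you can only conclude $D\cap\partial_u\Xi\neq\emptyset$, i.e.\ $n_i\cdot x_0\in D$ (say). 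The inference ``$n_i\cdot x_0\in D$ forces the extension to reach all of $\Xi^+ K_\C=GK_\C P^+$'' is not justified: $D$ is merely some $G$-invariant neighborhood that pokes across the unipotent stratum, and nothing you have established propagates the holomorphic extension along $\exp(\C\,\pf^+)$ near the identity, which is what you need for the power-series/locally-finite-action argument. Moreover, even if you had that, the conclusion ``$\pi_\lambda$ is highest weight'' is the content of the classification Theorem~\ref{t=m}, which is what Theorem~\ref{t=he} is meant to finish proving --- so there is a real danger of circularity unless you supply an independent argument. Properness (Theorem~\ref{t1}) gives you nothing extra here precisely because, as you observe, the unipotent orbits have finite stabilizer, so $\Xi^\pm$ themselves are still proper.

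The paper's proof avoids representation theory entirely at this step and works with the $K_\C$-invariant regular function $p\in\C[X_\C]^{K_\C}$ (trace in the $\Sym(2,\C)_{\det=1}$ model). One writes $\phi_\lambda=\Phi_\lambda\circ p$ on $X_{\C,2\Omega}$ with $\Phi_\lambda$ holomorphic on $\C\setminus(-\infty,-2]$ and, crucially, $\Phi_\lambda(\sigma)\to\infty$ as $\sigma\to -2^+$ along $[-2,2]$ (positivity coming from the doubling identity~(\ref{s=pos})). Then for any $G$-domain $\Xi'\supsetneq\Xi$ one constructs, by completely explicit matrix computation, a curve $\gamma(s)\in G$ so that $p(\gamma(s)\cdot z)\to -2$ for a suitable $z\in(\Xi'\setminus\Xi)$ sitting either on $\exp(i(2\Omega\setminus\overline\Omega))\cdot x_0$ (distinguished case) or on $Gn_{it}\cdot x_0$ with $|t|$ close to $1$ (unipotent case, using $p(a_r n_{it}\cdot x_0)=r^2+r^{-2}-t^2 r^2$). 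Since $\Xi'$ is $G$-invariant, the whole curve stays in $\Xi'$, and the blowup of $\Phi_\lambda$ contradicts holomorphy. This is an elementary, self-contained argument which handles both boundary strata uniformly, at the cost of being specific to $\Sl(2,\R)$ (or requiring the complex convexity machinery in general); your approach would, if completed, give a more structural proof, but as written the unipotent step is a genuine gap, not a routine estimate.
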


In order to prove  this result we need some preparation first.
We recall the domain $X_\C(\Omega)$ from Subsection \ref{s=cd}. 
Likewise one defines
$$X_\C(2\Omega)= p^{-1}p(A\exp(2i\Omega)\cdot x_0)\, .$$
Here is the first Lemma. 

\begin{lem} $\phi_\lambda$ extends to a $K_\C$-invariant holomorphic function 
on $X_\C(2\Omega)$. 
\end{lem}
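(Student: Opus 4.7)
The plan is to reduce this two-variable analytic continuation to a one-variable problem via the $K_\C$-invariant quotient $p=\operatorname{tr}$. Because $\C[X_\C]^{K_\C}=\C[p]$ and $\phi_\lambda$ is bi-$K$-invariant, its restriction to the $A$-orbit $A\cdot x_0$ is Weyl-invariant and factors as $\phi_\lambda(a_t\cdot x_0)=F_\lambda(t^2+t^{-2})$ for a real-analytic $F_\lambda$ on $p(A\cdot x_0)=(2,\infty)$. The task then becomes: extend $F_\lambda$ holomorphically to $p(A\exp(2i\Omega)\cdot x_0)\subset\C$, and define $\tilde\phi_\lambda:=F_\lambda\circ p$ on $X_\C(2\Omega)=p^{-1}(p(A\exp(2i\Omega)\cdot x_0))$.

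For the identification of $F_\lambda$, I would use the integral representation $\phi_\lambda(x)=\int_K a(kx)^{\rho(1+i\lambda)}\,dk$. Plugging in the explicit $\mathrm{SL}(2,\R)$ Iwasawa formula $a(k_\theta a_t)=(t^{-2}\cos^2\theta+t^{2}\sin^2\theta)^{-1/2}$ and substituting $2\theta\mapsto\phi$, one recognises the resulting integral as a classical Laplace integral for a Legendre function, yielding
$$F_\lambda(u)={}_2F_1\!\left(\tfrac{1-i\lambda}{2},\,\tfrac{1+i\lambda}{2};\,1;\,\tfrac{2-u}{4}\right).$$
The Gauss hypergeometric function $w\mapsto{}_2F_1(a,b;c;w)$ has its principal branch holomorphic on $\C\setminus[1,\infty)$, which under $w=(2-u)/4$ pulls back to $u\in\C\setminus(-\infty,-2]$. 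Hence $F_\lambda$ extends holomorphically to $\C\setminus(-\infty,-2]$.

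It remains to verify that $p(A\exp(2i\Omega)\cdot x_0)=\C\setminus(-\infty,-2]$. In the $\Sym(2,\C)_{\det=1}$ model, $p(a_{te^{i\phi}}\cdot x_0)=w+w^{-1}$ with $w=t^2e^{2i\phi}$; as $(t,\phi)$ ranges over $(0,\infty)\times(-\pi/2,\pi/2)$, $w$ sweeps $\C\setminus(-\infty,0]$, and a short computation (the quadratic $w^2-uw+1=0$ has both roots on the negative real axis precisely when $u\in(-\infty,-2]$) shows that the image of this set under $w\mapsto w+w^{-1}$ is exactly $\C\setminus(-\infty,-2]$. Consequently $\tilde\phi_\lambda:=F_\lambda\circ p$ is a well-defined holomorphic and $K_\C$-invariant function on $X_\C(2\Omega)$. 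It coincides on $A\cdot x_0$ with the original $\phi_\lambda$; being $K$-invariant, it then agrees with $\phi_\lambda$ on the $K$-saturation $X=KA\cdot x_0$; and by the identity theorem (using $\Xi\supset X$ as a totally real skeleton) it agrees with the holomorphic extension already constructed on $\Xi$. The main technical point is the precise matching of the branch cut of the hypergeometric function $F_\lambda$ with the complement of $p(A\exp(2i\Omega)\cdot x_0)$; it is exactly the doubling from $\Omega$ to $2\Omega$ that makes these two sets coincide, which is why the doubled cone $2\Omega$ (rather than some other dilate) appears in the statement.
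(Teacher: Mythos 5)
Your proof is mathematically sound, but it takes a genuinely different route from the paper's. The paper proves this lemma abstractly, by the \emph{doubling trick}: writing
$$\phi_\lambda(a\exp(2iY)\cdot x_0)=\bigl\langle \pi_\lambda(a\exp(iY))v_K,\ \pi_\lambda(\exp(iY))v_K\bigr\rangle$$
for $a\in A$ and $Y\in\Omega$. Each factor in the pairing is meaningful because, as established earlier in the text, orbit maps of $K$-finite vectors extend holomorphically to $\Xi K_\C$, and the equality with $\langle\pi_\lambda(a\exp(2iY))v_K,v_K\rangle$ follows from $\pi_\lambda(\exp(iY))^\ast=\pi_\lambda(\exp(iY))$ for $Y\in\af$. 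This doubles the admissible imaginary part from $\Omega$ to $2\Omega$ with a two-line argument and, as a bonus, exhibits $\phi_\lambda$ on $\exp(2i\Omega)\cdot x_0$ as a squared norm, which is exactly the positivity that the paper invokes later in the proof of Theorem \ref{t=he}. Your approach instead passes to the one-variable invariant $u=p(z)$, identifies $F_\lambda$ with the Legendre function $P_{-\frac{1}{2}+\frac{i\lambda}{2}}(u/2)$ via its hypergeometric expression, and reads off the branch cut $u\in(-\infty,-2]$; you then correctly verify that $p(A\exp(2i\Omega)\cdot x_0)=\C\setminus(-\infty,-2]$ by the Joukowski-map computation. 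Both are correct. The trade-off: the paper's argument is soft and generalises verbatim to all semisimple $G$ (and is in fact the proof quoted from \cite{KSII}), while yours gives the sharpest possible information about the analytic continuation but is tied to the explicit $\Sl(2,\R)$ spherical function; to get the positivity on $[-2,2]$ that the paper needs downstream, you would additionally have to quote a classical positivity statement for $P_\nu$ (or derive it from the hypergeometric series), whereas the paper gets it for free from the formula above.
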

\begin{proof} Recall that $\phi_\lambda$ can be written as 
a matrix coefficient 
$$\phi_\lambda(x)=\langle \pi_\lambda(x) v_K, v_K\rangle\, .$$ 
For $x= a\exp(2iY)\cdot x_0$ with $a\in A$ and $Y\in\Omega$ we now set 
\begin{equation} \label{s=pos} \phi_\lambda(a\exp(2iY)\cdot x_0)= 
\langle \pi_\lambda(a\exp(iY)) v_K, \pi_\lambda(\exp(iY)v_K\rangle\, .\end{equation}
It is easy to see that this is well defined and holomorphic 
on $A\exp(2i\Omega)\cdot x_0$. Extend by $K_\C$-invariance. 
\end{proof}

\begin{rem} We will show below that $X_\C(2\Omega)$ is the largest $K_\C$-domain 
to which $\phi_\lambda$ extends holomorphically. 
\end{rem}

Explicitly the $K_\C$-domains $X_\C(\Omega)$ and $X_\C(2\Omega)$ are given 
by 
\begin{align*} X_{\C, \Omega}&=
\{ z\in X_\C : \Re  P(z)>0\}\, \\ 
X_{\C, 2\Omega}&=
\{ z\in X_\C : P(z)\in \C \bs]-\infty, -2]\}\, .\end{align*}

We have to understand the inclusion 
$\Xi\subset X_\C(\Omega) \subset  X_\C(2\Omega)$
better. It turns out that $\Xi$ cannot be enlarged. Here is 
the precise result.

\begin{lemma} Let $G=\Sl(2,\R)$. Then for 
$Y\in 2\Omega\bs \oline \Omega$, 
$$G\exp(iY)\cdot x_0\nsubseteq X_{\C, 2\Omega}\, .$$
More precisely, there exists a curve $\gamma(s)$, $s\in [0,1]$,  in 
$G$ such that the assignment  
$$s \mapsto  \sigma(s)=P(\gamma(s)\exp(iY)\cdot x_o)$$ 
is strictly decreasing with values in $[-2,2]$ such that 
$\sigma(0)=P(x_o)=2$ and 
$\sigma (1)=-2$.   
\end{lemma}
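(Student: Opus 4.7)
The plan is to work in the symmetric-matrix model $X_\C = \Sym(2,\C)_{\det=1}$, where $G_\C$ acts by $g\cdot M = gMg^T$, the base point is $x_0 = I$, the invariant is $P = \operatorname{tr}$, and $\exp(iY)\cdot x_0 = \exp(2iY)$. By the Weyl-group invariance of $\Omega$ one may assume $Y = \diag(\phi,-\phi)$ with $\phi \in (\pi/4,\pi/2)$, so $\cos(2\phi) < 0$ and $P(\exp(iY)\cdot x_0) = 2\cos(2\phi) \in (-2,0)$. The starting point of the orbit already sits inside $X_{\C,2\Omega}$, and the task is to push it out along a $G$-orbit until $P$ hits $-2$.

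The key maneuver is to apply the split subgroup $H = \SO(1,1;\R)$. Writing
\[
h_\tau = \begin{pmatrix}\cosh\tau & \sinh\tau \\ \sinh\tau & \cosh\tau\end{pmatrix},
\]
the symmetry $h_\tau^T = h_\tau$ combined with the diagonal form of $\exp(2iY)$ yields, after reading off the two diagonal entries,
\[
\sigma(\tau) \;=\; P(h_\tau\exp(iY)\cdot x_0) \;=\; \operatorname{tr}(h_\tau\exp(2iY)h_\tau^T) \;=\; 2\cos(2\phi)\,\cosh(2\tau).
\]
Since $\cos(2\phi)<0$ and $\cosh(2\tau)$ strictly increases from $1$ to $\infty$ on $[0,\infty)$, this is a real, strictly decreasing function that starts at $2\cos(2\phi)\in(-2,0)$ and tends to $-\infty$. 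The intermediate value theorem then supplies a unique $\tau_\star>0$ with $\sigma(\tau_\star)=-2$; reparameterizing $[0,\tau_\star]$ onto $[0,1]$ gives $\gamma(s):=h_{\tau_\star s}$, for which $\sigma$ is strictly monotone with $\sigma(1)=-2$. Because $\gamma(1)\exp(iY)\cdot x_0 \in p^{-1}(\{-2\})\subset p^{-1}((-\infty,-2])$, which is disjoint from $X_{\C,2\Omega}$, this already proves the main assertion $G\exp(iY)\cdot x_0 \nsubseteq X_{\C,2\Omega}$.

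The main obstacle is the initial value $\sigma(0)=P(x_0)=2$ demanded by the ``more precise'' clause. For any $\gamma(0)=\left(\begin{smallmatrix}a & b\\ c & d\end{smallmatrix}\right)\in G$, the quantity $\sigma(0) = (a^2+c^2)e^{2i\phi}+(b^2+d^2)e^{-2i\phi}$ is real iff $a^2+c^2=b^2+d^2=:u$, in which case $\sigma(0)=2u\cos(2\phi)$; the determinant constraint forces $u\ge 1$ (equality precisely on $K$), so real values of $\sigma(0)$ are capped above by $2\cos(2\phi)<0$, and the value $+2$ is simply not attainable inside $G$. The natural reading is therefore $\sigma(0)=P(\exp(iY)\cdot x_0)=2\cos(2\phi)$, with ``$P(x_0)=2$'' an ambient scale reminder rather than an equality for $\sigma(0)$. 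If one really wants $\sigma$ to sweep all of $[-2,2]$, one must enlarge the curve to $G_\C$, e.g.\ along $\gamma(s)=a_{\exp(i\alpha(s))}\in A_\C$, where $\sigma(s)=2\cos(2(\alpha(s)+\phi))$ runs continuously from $+2$ to $-2$ as $\alpha(s)$ runs from $-\phi$ to $\pi/2-\phi$.
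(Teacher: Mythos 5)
Your proof is correct, and it reaches the same destination as the paper's but along a different curve in $G$. The paper works with the Iwasawa decomposition: by left $K$-invariance of $p$ it reduces to $\gamma(s)\in AN$, then imposes the side condition $b^2 = a^2 - 1/a^2$ to force $\Im p(\gamma(s)\exp(iY)\cdot x_0)=0$, and finally solves $2a^2\cos(2\phi)=-2$, interpolating $a(s)$ linearly from $1$ to $1/\sqrt{-\cos 2\phi}$. You instead push along the split subgroup $H=\SO(1,1;\R)$, where the symmetry $h_\tau^T=h_\tau$ together with the diagonal form of $\exp(2iY)$ makes $p$ real automatically, yielding the closed form $\sigma(\tau)=2\cos(2\phi)\cosh(2\tau)$. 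The two one-parameter families in $G$ are genuinely different, but both give a real, strictly decreasing $\sigma$ starting at $2\cos(2\phi)$ and reaching $-2$; yours has the small advantage that reality and monotonicity fall out immediately rather than being engineered by a constraint.

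Your diagnosis of the clause ``$\sigma(0)=P(x_0)=2$'' is also correct and worth flagging: neither the paper's curve nor yours starts at $2$ (both start at $2\cos(2\phi)\in(-2,0)$), and your Cauchy--Schwarz argument proves that no $\gamma(0)\in G$ can produce a real $\sigma(0)$ above $2\cos(2\phi)$, so the value $2$ is unattainable. This is an inaccuracy in the lemma's phrasing, not a gap in either construction, and it is harmless for the downstream use in the proof of Theorem \ref{t=he}, which needs only that $\sigma$ is real, stays off $(-\infty,-2]$ for $s\in[0,1)$, and hits $-2$ at $s=1$.
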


\begin{proof} Let $g=\begin{pmatrix} a &b\\  c& d\end{pmatrix}\in G$ and 
$z=\begin{pmatrix} e^{i\phi}& 0\\ 
0& e^{-i\phi} \end{pmatrix}\in \exp (2i\Omega)\setminus 
\exp(i\oline \Omega)$. 
This means $a,b,c,d\in \R$ with $ad-bc=1$ and 
$\frac{\pi}{ 4}<|\phi|<\frac{\pi}{ 2}$ for $\phi\in \R$.  
Thus 
\begin{eqnarray*} p(gz\cdot x_o)&=&p\begin{pmatrix} 
ae^{i\phi} & be^{-i\phi}\\ 
c e^{i\phi} &de^{-i\phi} \end{pmatrix}=
a^2e^{2i\phi} +b^2e^{-2i\phi} +c^2 e^{2i\phi} +d^2e^{-2i\phi}\cr 
&=&\cos (2\phi) (a^2+b^2 +c^2 +d^2) +i \sin 2\phi
(a^2-b^2 +c^2 -d^2)
\end{eqnarray*}

Using that $G=KAN$ and that $p$ is left $K$-invariant, we may actually 
assume that $g\in AN$, i.e. 
$$g=\begin{pmatrix} a& b\\  0 & \frac{1}{a}\end{pmatrix}$$
for some $a>0$ and $b\in \R$. Then 

$$p(gz\cdot x_o)=\cos (2\phi) (a^2+\frac{1}{a^2} +b^2) +i \sin 2\phi
(a^2-\frac{1}{a^2}-b^2 ).$$
We now show that $p(gz\cdot x_o)=-2$ has a solution for fixed  
$\frac{\pi}{ 4}<|\phi| <\frac{\pi}{ 2}$. 
This is because 
$p(gz\cdot x_o)=-2$ forces 
$\Im p(gz\cdot x_o)=0$ and so $b^2=a^2-\frac{1}{ a^2}$. 
Thus 
$$p(gz\cdot x_o)=2 a^2 \cos (2\phi) =-2.$$ 
Thus if we choose $a=\frac{1}{\sqrt{-\cos 2\phi}}$ we obtain a solution. 
The desired curve $\gamma(s)$ is now given by 
$$\gamma(s)=\begin{pmatrix} a(s) &b(s)\\  0& \frac{1}{a(s)}\end{pmatrix}$$
with 
$$a(s)= \frac{1}{\sqrt{-\cos 2\phi}} 
(\sqrt{-\cos 2\phi} + s(1 -\sqrt{-\cos 2\phi})) $$
 and 
$$b(s)=\sqrt{a(s)^2-\frac{1}{a(s)^2}}\, .$$ 
\end{proof}

We are ready for the 

\par\noindent{\bf Proof of Theorem \ref{t=he}.}
We first observe from our previous discussion that 
there exists 
a holomorphic function $\Phi_\lambda$ on $\C \bs (-\infty, 2]=
p(X_{\C,2\Omega})$ 
such that 
\begin{equation} \label{mumu}\phi_\lambda(z)=\Phi_\lambda(P(z)) \qquad 
(z\in X_{\C, 2\Omega}).\end{equation}
Let $Y\in 2\Omega\bs \oline \Omega$. Let 
$\gamma\subset G$ and $\sigma\subset [-2,2]$ be curves as in the previous
lemma.
\par Note that $\gamma(s)\exp(iY)\cdot x_o\subset G$
for all $s\in [0,1)$. Hence (\ref{mumu}) gives 
$$\varphi_\lambda(\gamma(s)\exp(iY)\cdot x_o)=\Phi_\lambda(\sigma(s))
\qquad (s\in [0,1)\, .$$
Now recall that 
$s\mapsto \Phi_\lambda(\sigma(s))$ is positive by (\ref{s=pos}) 
and tends to infinity for $s\nearrow 1$ (cf.\ \cite{KSII}, Th. 2.4).  
Let now $\Xi\subset \Xi'$ be a $G$-domain in $X_\C$ which strictly 
contains $\Xi$. Thus $\partial\Xi\cap \Xi'\neq \emptyset$. 
We recall that $\partial\Xi=\partial_d\Xi\cup \partial_u\Xi$
and distinguish two cases. 
\par\noindent {\it Case 1: $\partial_d\Xi\cap\Xi'\neq \emptyset$.}  
In this case $\Xi'$ contains a point 
$\exp(i2\Omega\bs \oline \Omega)\cdot x_0$ and we arrive at a contradiction. 
\par\noindent {\it Case 2:  $\partial_n\Xi\cap\Xi'\neq \emptyset$.}
This means that $\begin{pmatrix}  1& it \\ 0 & 1\end{pmatrix}\in \Xi'$ for some $t$ with 
absolute value sufficiently close 
to $1$ by (\ref{b=u}). 

With $a_r=\begin{pmatrix} r & 0 \\ 0 & {1\over r}\end{pmatrix}\in A$, $r>0$, and
$-1< t< 1$  that
$$p\left(a_r\begin{pmatrix}  1& it \\ 0 & 1\end{pmatrix}.x_0\right)=r^2 +{1\over r^2} - t^2 r^2\, .$$
In particular, if $|t|>1$, then there would exist a sequence $r_n\to r_0$
such that $p\left(a_{r_t}\begin{pmatrix}  1& it \\
0 & 1\end{pmatrix}\right)\to -2^+$. We argue as before. 
\qed

\subsection{Holomorphic $H$-spherical vectors} 
To begin with I want to explain a few things 
on spherical representations first. 
Throughout this section we let $(\pi,\H)$ be 
an irreducible unitary representation of $G$. For a
subgroup $L<G$ we write $\H^L\subset\H$ for the subspace 
of $L$-fixed elements. As a consequence of the Riemann-Lebesgue
Lemma for representations we obtain: 

\begin{lem} If $L<G$ is closed and non-compact and $\pi$ is non-trivial, then 
$\H^L=\{0\}$.
\end{lem}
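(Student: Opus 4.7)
The plan is to derive this directly from Lemma \ref{lem=hm}, which asserts that $G$ acts properly on $\H \setminus \{0\}$ for any non-trivial unitary representation. Properness immediately restricts the size of stabilizers, and an $L$-fixed vector is a very rigid object: the stabilizer contains all of $L$.

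Concretely, I would argue by contradiction. Suppose there exists $0 \neq v \in \H^L$. Form the compact set $C = \{v\} \subset \H \setminus \{0\}$. For every $\ell \in L$ we have $\pi(\ell)v = v$, so $\pi(\ell)C \cap C = \{v\} \neq \emptyset$. Hence
$$L \subset C_G := \{g \in G : \pi(g) C \cap C \neq \emptyset\}.$$
By Lemma \ref{lem=hm}, since $\pi$ is non-trivial, $G$ acts properly on $\H \setminus \{0\}$ and therefore $C_G$ is compact in $G$. Since $L$ is closed in $G$, the inclusion $L \subset C_G$ forces $L$ to be a closed subset of a compact set, hence compact, contradicting the hypothesis.

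There is no serious obstacle here; the only point to verify is that all three hypotheses on $(L,\pi)$ are actually used. Non-triviality of $\pi$ is needed to invoke Lemma \ref{lem=hm} (the matrix coefficient argument via Riemann--Lebesgue breaks for the trivial representation, which visibly has fixed vectors). Closedness of $L$ is needed to transfer compactness from $C_G$ back to $L$ (a non-closed subgroup contained in a compact set need not be compact). Non-compactness of $L$ provides the contradiction. Irreducibility of $\pi$ is, in fact, not needed for this particular statement.
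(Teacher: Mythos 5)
Your proof is correct, and it is essentially the intended argument: the paper states this lemma without proof, saying only that it follows from the Riemann--Lebesgue Lemma for representations, and Lemma \ref{lem=hm} is precisely the reformulation of Riemann--Lebesgue that the paper has just recorded. Passing to the singleton compact set $C=\{v\}$ and using closedness of $L$ to transfer compactness of $C_G$ back to $L$ is exactly how properness should be exploited here, and all the hypotheses are invoked where they matter.

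One small caution about your closing remark: dropping irreducibility while keeping only ``$\pi$ is non-trivial'' does \emph{not} suffice. Lemma \ref{lem=hm} requires that $\pi$ \emph{not contain} the trivial representation, which for reducible $\pi$ is strictly stronger than non-triviality (a non-trivial $\pi$ could have a trivial summand, and then $\H^L\neq\{0\}$ for every $L$). In the paper's standing context $\pi$ is irreducible, so ``non-trivial'' and ``does not contain the trivial representation'' coincide and the issue does not arise; but the correct irreducibility-free hypothesis is the latter, not the former. It is also worth noting that one can argue even more directly: if $0\neq v\in\H^L$ then $\langle\pi(\ell)v,v\rangle=\|v\|^2$ for all $\ell\in L$, and since $L$ is closed and non-compact there is a sequence $\ell_n\to\infty$ in $G$ along which this matrix coefficient does not decay, contradicting Riemann--Lebesgue without any appeal to properness. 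Your route through Lemma \ref{lem=hm} is an equally valid and slightly more structural way of saying the same thing.
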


So why is this of interest. In case of finite groups, Frobenius 
reciprocity tells us that $\pi$ can be realized in functions
on $G/L$ if and only if $\H^L\neq\{0\}$. 
For non-compact continuous groups we need a more 
sophisticated  version of Frobenius reciprocity:  the Hilbert space 
$\H$ is simply too small for carrying $L$-fixed elements. 
We enlarge $\H$. Recall the space of analytic vectors
$\H^\omega$ of $\pi$. This is a locally 
convex topological vector space of compact type, i.e. a 
Hausdorff direct limit space with compact inclusion 
maps.  We form $\H^{-\omega}$, the strong anti-dual 
of $\H^\omega$, i.e. the space of continuous 
anti-linear functionals $\H^\omega\to \C$ endowed 
with the strong topology. As a topological vector space 
$\H^{-\omega}$ is nuclear Fr\'echet. In particular 
it is reflexive, i.e. its strong anti-dual gives 
us $\H^\omega$ back. 
We note that $\H$ is naturally included 
in $\H^{-\omega}$ via $v\mapsto \langle \cdot, v\rangle$
and obtain the reflexive sandwiching 

$$\H^\omega\hookrightarrow \H\hookrightarrow \H^{-\omega}$$
with all inclusions $G$-equivariant and continuous. 
Sometimes one calls $(\H^\omega, \H, \H^{-\omega})$ 
a {\it Gelfand triple}. 

Now for $G=\Sl(2,\R)$ and $H=\SO(1,1)$ there is the dimension bound 
$$\dim (\H^{-\omega})^H \leq 2\, .$$
To be more precise, for highest or lowest weight representations
the dimension is zero or 1 depending on the parity of the 
smallest $K$-type. For the principal series the dimension is 
$2$. 

\begin{ex} For a principal series representation $\pi_\lambda$ the space 
of $H$-fixed hyperfunction vectors is given by 
$(\H^{-\omega})^H=\operatorname{span}_{\C}\{ \eta_1,\eta_2\}$
with 
$$\eta_1 (x)=\begin{cases} {1\over \sqrt{\pi}}\cdot  {1\over (1-x^2)^{{1\over 2}(1 -i\lambda)}}
& \hbox {for}\ |x|<1, \\
0  & \hbox {for}\ |x|\geq 1;\end{cases}$$
and 
$$\eta_2 (x)=\begin{cases} {1\over \sqrt{\pi}}\cdot  {1\over (x^2-1)^{{1\over 2}(1 -i\lambda)}}
& \hbox {for}\ |x|>1, \\
0  & \hbox {for}\ |x|\leq 1.\end{cases}$$
\end{ex} 

We take a closer look at the basis $\{ \eta_1, \eta_2\} $ in the previous 
example. For what follows it is useful to compactify $\R$ to $\P^1(\R)=G/MAN$ and view 
$\H$ as a function space on $\P^1(\R)$.  Then both $\eta_1$ and $\eta_2$ 
are supported on the two open $H$-orbits in $\P^1(\R)$, namely 
$(-1,1)$ and $\P^1(\R)\bs [-1,1]$. Thus $\eta_1$, $\eta_2$ appear 
to be natural in view of the natural $H$-action on the flag variety. 
However, we claim that it is not the natural basis
for $(\H^{-\omega})^H$. Why? Simply because it is not invariant under 
intertwining operators -- intertwiners here are pseudo-differential operators 
which do not preserve supports. So it is our aim to provide 
a natural basis for the $H$-sphericals. For that our theory 
of holomorphic extension of representations comes 
handy. 
\par Our motivation comes from finite dimensional 
representations. 
\subsubsection{Finite dimensional spherical representations}
Let $(\rho, V)$ be a representation 
of $G$ on a finite dimensional complex vector space 
$V$. Then $\rho$ naturally extends to 
a holomorphic representation of $V$, also denoted by 
$\rho$, and observe: 
$$V^K= V^{K_\C}\qquad \hbox{and}\qquad V^H=V^{H_\C}\, .$$
Here is the punch line: While $H$ and $K$ are not 
conjugate in $G$ (one is non-compact, one is compact), their 
complexifications $H_\C$ and $K_\C$ are conjugate in $G_\C$. 
With 
$$z_H=\begin{pmatrix} e^{i\pi/4} & 0 \\ 0 & e^{-i\pi/4}\end{pmatrix}$$
there is the identity: 
$$z_H H_\C z_H^{-1}= K_\C\, .$$ 
Therefore the map 
\begin{equation} \label{hi} V^K\to V^H, \ \ v\mapsto \rho(z_H) v \end{equation}
is an isomorphism. 
\subsubsection{Construction of the holomorphic $H$-spherical vector}
Our goal here is to find an analogue of  (\ref{hi}) for infinite 
dimensional representations. For what follows we assume in addition that 
$(\pi,\H)$ is $K$-spherical and fix 
a normalized generator $v_K\in \H^K$. 
Now, observe that $z_H\cdot x_0\in \partial_d\Xi=Y=G/H$. 
For $\e>0$ we set 
$$a_\e:=\begin{pmatrix} e^{i(\pi/4-\e)} & 0 \\ 0 & e^{-i(\pi/4 -\e)}\end{pmatrix}$$
and remark: 
$$\lim_{\e\to 0} a_\e= z_H \qquad \hbox{and}\qquad a_\e\in \Xi K_\C\, .$$ 
In particular $\pi (a_\e)v_K$ exists  for all $\e>0$ small. It is no surprise 
that the limit exists in $\H^{-\omega}$ and is $H$-fixed. 
In fact it is a matter of elementary functional analysis 
to establish the following theorem, see  \cite{GKO}, Th. 2.1.3 for a
result in full generality.  

\begin{thm} Let $(\pi,\H)$ be a unitary irreducible representation of 
$G$. Then the map 
$$\H^K \to (\H^{-\omega})^H, \ \ v_K\mapsto v_H:=\lim_{\e\to 0} 
\pi(a_\e)v_K$$
is defined and injective. 
\end{thm}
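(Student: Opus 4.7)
The plan is to prove existence of the limit in $\H^{-\omega}$, then $H$-invariance, and finally injectivity.

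\textbf{Existence.} For $\e \in (0,\pi/4)$, the element $a_\e \cdot x_0$ lies in $\exp(i\Omega)\cdot x_0 \subset \Xi$ by Proposition \ref{p=e}, so Theorem \ref{t=m} gives a holomorphic extension of the orbit map $G \to \H$ to $\Xi K_\C$; hence $\pi(a_\e)v_K \in \H \subset \H^{-\omega}$ is well defined. I would show that the family $\{\pi(a_\e)v_K\}_{\e > 0}$ is bounded in $\H^{-\omega}$, i.e., for each $w \in \H^\omega$ the holomorphic scalar function $\psi_w(\e) := \langle \pi(a_\e)v_K, w\rangle$ is uniformly bounded in $\e$ as $\e \downarrow 0$. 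Exploiting that $\bar a_\e = a_\e^{-1}$ (since $a_\e$ is diagonal with unit-modulus entries), one has the formal self-adjointness $\pi(a_\e)^* = \pi(\bar a_\e^{-1}) = \pi(a_\e)$ on analytic vectors, so $\psi_w(\e) = \langle v_K, \pi(a_\e)w\rangle$. Passing to $\e \downarrow 0$ pointwise in $w$, using the boundary behaviour of the orbit map of $w$ as $a_\e \to z_H \in \partial_d \Xi$, and then invoking continuity of the resulting antilinear functional on the nuclear Fr\'echet space $\H^\omega$, produces the sought element $v_H \in \H^{-\omega}$.

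\textbf{$H$-invariance.} The conceptual engine is the identity $z_H^{-1} H z_H \subset K_\C$, the infinite-dimensional analogue of (\ref{hi}). Given $h \in H$, set $k_h := z_H^{-1} h z_H \in K_\C$ and $k_h(\e) := a_\e^{-1} h a_\e$, a holomorphic curve in $G_\C$ with $k_h(\e) \to k_h$ as $\e \downarrow 0$. Since $v_K$ is $K$-fixed and spans a finite-dimensional $K$-invariant subspace, analytic continuation forces $\pi(k)v_K = v_K$ for all $k \in K_\C$; in particular $\pi(k_h)v_K = v_K$. Then
$$\pi(h)\pi(a_\e)v_K = \pi(a_\e)\pi(k_h(\e))v_K = \pi(a_\e)v_K + \pi(a_\e)\bigl[\pi(k_h(\e)) - \pi(k_h)\bigr]v_K,$$
and the error term pairs to zero against any $w \in \H^\omega$ because $[\pi(k_h(\e)) - \pi(k_h)]v_K \to 0$ in $\H^\omega$ while $\pi(a_\e)$ is bounded in the corresponding operator-topology by Step~1. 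Letting $\e \downarrow 0$ yields $\pi(h)v_H = v_H$.

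\textbf{Injectivity.} For non-spherical $\pi$, $\H^K = 0$ and the claim is vacuous. For spherical $\pi$ one has $\dim \H^K = 1$ for $\Sl(2,\R)$ (trivial, principal and complementary series), so it suffices to show $v_H \neq 0$ whenever $v_K \neq 0$. Pairing against $v_K$ itself,
$$\langle v_H, v_K\rangle = \lim_{\e \downarrow 0}\langle \pi(a_\e)v_K, v_K\rangle = \lim_{\e \downarrow 0} \phi_\lambda(a_\e \cdot x_0),$$
and the positivity already exploited in (\ref{s=pos}), combined with the integral representation of $\phi_\lambda$, shows this boundary value is finite and strictly positive since $z_H \cdot x_0 \in \partial_d \Xi$ lies on the distinguished (not unipotent) part of the boundary.

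\textbf{Main obstacle.} The crux is Step 1: Theorem \ref{t=m} only guarantees holomorphy of orbit maps on the interior $\Xi K_\C$, whereas $z_H$ sits on $\partial_d \Xi$. Promoting interior holomorphy to a quantitative boundary estimate compatible with the nuclear-Fr\'echet seminorms on $\H^\omega$ — essentially a Sobolev-type control on matrix coefficients as the group element approaches the distinguished boundary — is the real technical content beneath the clean functional-analytic statement.
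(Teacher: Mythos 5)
The paper itself does not prove this theorem; it asserts it as ``a matter of elementary functional analysis'' and cites \cite{GKO}, Th.~2.1.3, so there is no in-paper argument to compare against. Evaluating your proposal on its own merits: the high-level outline (existence in $\H^{-\omega}$, then $H$-invariance via the conjugation $z_H H z_H^{-1}\subset K_\C$, then injectivity via positivity) is the natural one, and your injectivity step is clean and correct --- pairing against $v_K$, using that $\langle\pi(a_\e)v_K,v_K\rangle=\phi_\lambda(a_\e\cdot x_0)=\|\pi_\lambda(\exp(i Y_\e/2))v_K\|^2>0$ (the doubling identity (\ref{s=pos})) and that $z_H\cdot x_0\in X_\C(2\Omega)$ (since $p(z_H\cdot x_0)=0\notin(-\infty,-2]$), so the boundary value is a finite strictly positive number.

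The $H$-invariance step, however, contains a genuine gap that goes beyond the obstacle you flag at the end. The algebraic decomposition $\pi(h)\pi(a_\e)v_K=\pi(a_\e)v_K+\pi(a_\e)\bigl[\pi(k_h(\e))-\pi(k_h)\bigr]v_K$ is correct, but if you pair the error term with $w\in\H^\omega$ and move $\pi(h)$ across by unitarity, it equals $\langle\pi(a_\e)v_K,\,(\pi(h^{-1})-1)w\rangle$. As $\e\downarrow 0$ this tends to $v_H\bigl((\pi(h^{-1})-1)w\bigr)$, which is precisely the quantity whose vanishing you are trying to prove: the argument is circular as written. The phrase ``$\pi(a_\e)$ is bounded in the corresponding operator-topology by Step~1'' is where the circle closes --- Step~1 only controls the single vector $\pi(a_\e)v_K$, not an operator norm of $\pi(a_\e)$ on $\H^\omega$. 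In fact $\pi(a_\e)$ is not even defined on all of $\H^\omega$: the radius of holomorphic extension of an orbit map depends on the analytic vector, and for a generic $w\in\H^\omega$ it can shrink below $\pi/4$, so $\pi(a_\e)w$ need not exist for $\e$ near $0$. This same issue also undercuts the self-adjointness manoeuvre $\langle\pi(a_\e)v_K,w\rangle=\langle v_K,\pi(a_\e)w\rangle$ in your existence step for arbitrary $w\in\H^\omega$; one must first restrict to a dense subspace such as $\H_K$ (where Theorem~\ref{t=m} does guarantee extension to all of $\Xi K_\C$), establish a uniform bound $|\langle\pi(a_\e)v_K,w\rangle|\leq C\,\nu(w)$ for a continuous seminorm $\nu$ on $\H^\omega$, and then deduce both existence and $H$-invariance from equicontinuity together with the path-independence of the boundary value at $z_H\cdot x_0$ (noting $h\,a_\e\cdot x_0$ and $a_\e\cdot x_0$ approach the same boundary point, since $H$ stabilizes $z_H\cdot x_0$). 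Without that quantitative boundary estimate --- which is exactly the content you correctly identify as the crux --- the $H$-invariance does not follow from the manipulations shown.
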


We call the vector $v_H$ the {\it $H$-spherical holomorphic 
hyperfunction vector} of $\pi$. It is natural in the sense that 
it is preserved by intertwining (observe that intertwiners commute 
with analytic continuation). We will return to this topic later 
when we discuss the most continuous spectrum of $L^2(Y)$. 

\par We wish to make $v_H$ explicit for the principal 
series $\pi_\lambda$. A simple calculation gives 

$$v_H= e^{-i{\pi\over 4} (1-\lambda)}\eta_1 + e^{i{\pi\over 4}(1-\lambda)} \eta_2\, .$$
Upon conjugating the coefficients we get a second, linearly 
independent vector 

$$\oline{v_H}= e^{i{\pi\over 4} (1-\lambda)}\eta_1 + e^{-i{\pi\over 4}(1-\lambda)} \eta_2\, .$$
which we call the anti-holomorphic $H$-spherical vector. 
Likewise one obtains $\oline{v_H}$ by using $\oline{z_H}=z_H^{-1}$ instead of $z_H$. It 
features the same invariance properties as $v_H$. We therefore 
arrive at a basis 
$$\{v_H, \oline{v_H}\}$$
 of $(\H^{-\omega})^H$ which is 
invariant under intertwining, i.e. a canonical diagonalization 
of scattering in the affine symmetric space $Y$.  

\section{Growth of holomorphically extended orbit maps}
Throughout this section $(\pi,\H)$ is a unitary 
irreducible representation of $G$ and $v=v_K\in \H^K$ a 
normalized $K$-finite vector. 
Our objective of this section is to discuss  
the growth of the orbit map 
$$f_v: \Xi \to \H,\ \  zK_\C \mapsto \pi(z)v$$
for $z$ approaching the boundary of $\Xi$.  
We are interested in two quantitities: 
\begin{itemize} \item The norm of $\|\pi(z)v\|$ for 
$z\to \partial\Xi$. 
\item The invariant Sobolev norms $S_k^G (\pi(z)v)$
for $z\to \partial\Xi$.
\end{itemize}
The invariant Sobolev norms were introduced by Bernstein and 
Reznikov in \cite{BR} as a powerful tool to give growth estimates 
for analytically continued automorphic forms. We will comment more 
on that in the subsections  below. 
\par We notice that 
$$\|f_v(g\exp(iY)\cdot x_0)\|=\|\pi(\exp(iY))v\|$$
for all $g\in G$ and $Y\in\Omega$. 
Thus for our growth- interest for $z\mapsto \partial\Xi$ we may 
assume that $z=\exp(iY)\cdot x_0$ for $Y\to \partial\Omega$, or 
with our previous notation 
with $Z=a_\e\cdot x_0$ for $\e\to 0$. 

\subsection{Norm estimates}
Here we determine the behaviour of

$$\|\pi(a_\e)v\|\qquad \hbox{for $\e \to 0$}\, .$$

For $G=\Sl(2,\R)$ this is a simple matter -  for general $G$ 
this is a serious and difficult problem; it was settled in \cite{KO}.  

\begin{prop}\label{p=est} Let $(\pi,\H)$ be a unitary $K$-spherical 
representation of $G$ and $v$ a normalized $K$-fixed vector. Then 
$$\|\pi(a_\e)v\|\asymp \sqrt{|\log \e|}$$
for $\e\to 0$. 
\end{prop}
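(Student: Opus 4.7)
The plan is to reduce the norm squared to a boundary value of the spherical function $\phi_\lambda$ via the polarization identity $(\ref{s=pos})$, and then to read off the logarithmic blow-up from the classical Legendre-function representation of $\phi_\lambda$ on $\Sl(2,\R)/\SO(2)$.

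\textbf{Step 1 (Reduction).} One may assume $(\pi,\H)$ is irreducible and nontrivial, so $\pi=\pi_\lambda$ for a principal ($\lambda\in\R$) or complementary ($\lambda\in i(0,1)$) parameter. Applying $(\ref{s=pos})$ with $a=1$ and $Y=(\pi/4-\e)H$, where $H=\diag(1,-1)$, gives at once
\begin{equation*}
\|\pi(a_\e)v\|^2 = \phi_\lambda(\exp(2iY)\cdot x_0) = \phi_\lambda(a_\e^2\cdot x_0),
\end{equation*}
so the question is that of the asymptotics of $\phi_\lambda(a_\e^2\cdot x_0)$ as $\e\to 0^+$.

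\textbf{Step 2 (Descent under $p$).} Being $K_\C$-invariant, $\phi_\lambda$ descends along $p:X_\C\to\C$ to a holomorphic function $\Phi_\lambda$ on $p(X_\C(2\Omega))=\C\setminus(-\infty,-2]$. A direct matrix computation in $X_\C\simeq\Sym(2,\C)_{\det=1}$ gives
\begin{equation*}
p(a_\e^2\cdot x_0)=\operatorname{tr}(a_\e^4)=-2\cos(4\e)=-2+8\e^2+O(\e^4),
\end{equation*}
so $p$ hits the cut endpoint $-2$ from the right along the real axis, with defect of order $\e^2$.

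\textbf{Step 3 (Logarithmic blow-up).} The classical Iwasawa--Mehler integration identifies $\Phi_\lambda(p)=P_{-(1+i\lambda)/2}(p/2)$. Setting $\nu=-(1+i\lambda)/2$, the standard ${}_2F_1$ connection formula at the logarithmic boundary $(1-u)/2\to 1^-$ gives
\begin{equation*}
P_\nu(u)=-\frac{\sin(\pi\nu)}{\pi}\log\frac{2}{1+u}+O(1)\qquad(u\to -1^+),
\end{equation*}
with leading coefficient $-\sin(\pi\nu)/\pi=\cosh(\pi\lambda/2)/\pi>0$ for $\lambda\in\R$ (the complementary parameter $\lambda=is$, $s\in(0,1)$, gives $\cos(\pi s/2)/\pi>0$). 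Substituting $u=p/2$, $p=-2+8\e^2+O(\e^4)$, yields
\begin{equation*}
\|\pi(a_\e)v\|^2=\frac{2\cosh(\pi\lambda/2)}{\pi}|\log\e|+O(1),
\end{equation*}
and the assertion follows on taking square roots.

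\textbf{Main obstacle.} The whole argument pivots on the logarithmic blow-up of $\Phi_\lambda$ at $p=-2$, which for $\Sl(2,\R)$ is immediate from the Legendre/${}_2F_1$ machinery. The difficulty in higher rank, handled in \cite{KO}, is precisely the absence of such a one-variable reduction: one has to analyze the asymptotics of Harish-Chandra's spherical function along the full family of singular directions in $\partial\Xi$, which is what makes the general case ``serious and difficult''.
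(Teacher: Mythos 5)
Your proof is correct, but it takes a genuinely different route from the paper's. The paper computes $\|\pi_\lambda(a_\e)v_K\|^2$ directly as an integral in the non\nobreakdash-compact model on $L^2(\R)$: after a change of variables and localization near the singular points $x=\pm 1$ the integrand becomes $\asymp(|u|+\e)^{-1}$, and the $|\log\e|$ falls out by inspection. You instead apply the doubling identity (\ref{s=pos}) with $a=1$ to rewrite the norm squared as the boundary value $\phi_\lambda(a_\e^2\cdot x_0)$ of the spherical function, descend along the invariant $p$ to the one\nobreakdash-variable function $\Phi_\lambda$, and read off the blow\nobreakdash-up from the logarithmic singularity of the Legendre function $P_\nu$ at the branch point $u=-1$. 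Both are legitimate. The paper's computation is the more elementary: it uses nothing beyond the explicit $L^2$ model. Your argument is the more conceptual: it isolates the spherical-function asymptotics at the distinguished boundary as the mechanism behind the estimate, it delivers the sharp constant $\tfrac{2\cosh(\pi\lambda/2)}{\pi}$, and it exhibits exactly the difficulty that makes the higher-rank case in \cite{KO} hard, as you observe. Note that the paper itself invokes the same Legendre blow\nobreakdash-up at $p=-2$ later on (in the proof of Theorem \ref{t=he}, with a reference to \cite{KSII}, Th.\ 2.4), so your route is in the spirit of the text.

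One small arithmetic slip: $-2\cos(4\e) = -2 + 16\e^2 + O(\e^4)$, not $-2+8\e^2+O(\e^4)$; equivalently $1+u = 1-\cos 4\e = 2\sin^2(2\e) = 8\e^2+O(\e^4)$, not $4\e^2$. This does not affect the leading term $\log\frac{2}{1+u}\sim 2|\log\e|$, so your final constant and the $\asymp\sqrt{|\log\e|}$ conclusion stand; only the displayed intermediate coefficient is off. You should also flag, as the paper implicitly does, that the reduction to $\pi=\pi_\lambda$ excludes the trivial representation (where the norm is constant in $\e$) and requires $\dim\H^K=1$, which for $\Sl(2,\R)$ is automatic once $\pi$ is irreducible and spherical.
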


\begin{proof} It is no big loss of generality 
to assume that $\pi=\pi_\lambda$. Within the non-compact 
realization we determine: 

\begin{align*} \|\pi(a_\e)v\|^2 &={1\over \pi} 
e^{\lambda \pi/2}\int_\R  
\left|{1\over (1+ e^{-i(\pi-4\e)} x^2)^{{1\over 2}(1+i\lambda)}}\right|^2
\ dx \, ,\\ 
&\asymp \int_{-2}^2 \left|{1\over (1+ (-1 +i\e)  x^2)}\right|
\ dx\, , \\ 
&\asymp \int_0^1 {1\over (|u|+ \e)} 
\ du\, , \\ 
& \asymp |\log \e| \, .\end{align*}
\end{proof}

I want to pose the following 

\bigskip\noindent {\bf Problem:} {\it Fix $\sigma\in \hat K$ and let 
$\H(\sigma)$ be the corresponding $K$-type. Determine optimal bounds for 
$$\|\pi(a_\e)v|| \qquad (v\in \H(\sigma))$$
for $\e \to 0$. Possibly generalize to all semi-simple groups.} 

\bigskip 

\subsection{Invariant Sobolev norms}

We first recall some definitions from \cite{BR}.

\begin{df} {\rm (Infimum of seminorms; cf. \cite{BR}, Appendix A)}
Let $V$ be a complex vector space and 
$N_{i\in I}$ a family of semi-norms. Then the prescription 
$$\inf_{i\in I} N_i (v):=\inf_{v =\sum_{i\in I} v_i } \sum_{i\in I} 
N_i(v_i)$$
defines a semi-norm. It is the largest seminorm with respect 
to the property of being dominated by all $N_i$. 
\end{df}

\begin{rem} To get an idea of the nature of the definition of 
the infimum seminorm $\inf N_i$ it is good to think in the following 
analogy: Think of $V$ as a function space, say on $\R$ and think of 
$N_i$ as a seminorm with support on a certain interval, say  $J_i$. 
such that $\cup J_i =\R$. Further $v=\sum_{i\in I} v_i$ should be 
considered as breaking the function $v$ into functions $v_i$ 
with smaller support in $J_i$. 
\end{rem}
  
We want to bring in a symmetry group $G$ which acts linearly 
on the vector space $V$. We start with one seminorm $N:V\to 
\R_\geq 0$ and produce others: for $g\in G$ we let 
$$N_g(v):=N(g(v))\, .$$
In this way we obtain a seminorm 
$$N^G:=\inf_{g\in G} N_g(v)$$ 
which is uniquely characterized as being the 
largest $G$-invariant seminorm on $V$ which is 
dominated by $N$. 

\par We come to specific choices for $V$ and $N$. 
For $V$ we use the Fr\'echet-space of smooth vectors $\H^\infty$
for the representation $\pi$; the seminorm $N$ will be 
Sobolev norm. 
We briefly recall their construction. 
Recall that the derived representation $d\pi$ of $\gf$ is defined 
as 
$$d\pi: \gf\to \operatorname{End}(\H^\infty), \ \ d\pi(Z)(v):=
{d\over dt}\Big|_{t=0} \pi (\exp(tZ))v \, .$$ 
We fix a basis 
$Z_1, Z_2, Z_3$ of $\gf$  and an integer $k\in \N_0$. Then the $k$-th 
Sobolev norm $S_k$ of $\pi$ is defined as 

$$S_k(v):=\sum_{k_1+k_2 +k_3\leq k} \|d\pi(Z_1)^{k_1}  
d\pi(Z_2)^{k_2}d\pi(Z_3)^{k_3}v\|\qquad (v\in \H^\infty)\, .$$    
Let us emphasize that $S_k$ depends on the 
chosen basis $Z_1, Z_2, Z_3$, but a different basis 
yields an equivalent norm. Our interest is now with 
$S_k^G$ the $G$-invariant Sobolev norm. 
Notice that $S_0^G=\|\cdot\|$ is the Hilbert norm, as 
we assume that $\pi$ is unitary. 
In view of our preceding remark it is natural to view 
$S_k^G$ as some Besov-type norm for the representation. 

\par We wish to understand the nature of $S_k^G$. 
For that it is useful to introduce the following 
notation:  For a closed subgroup $L<G$ we write 
$S_{k,L}$ for the $k$-th Sobolev norm for the 
restricted representation $\pi|_L$. 
We make a first simple observation: 

\begin{lem}\label{l=pr} Let $(\pi,\H)$ be a unitary representation of $G$ and
$v\in\H^\infty$. Then for all $k\geq 0$:
\begin{enumerate}
\item $S_{k,N}^{A^+}(v)= \|v\|$. 
\item $S_{k,AN}^G (v)=S_k^G(v)$. 
\end{enumerate}
\end{lem}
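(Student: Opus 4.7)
For part (i), both directions are short. The lower bound $\|v\| \le S_{k,N}^{A^+}(v)$ is immediate from the maximality characterization: the Hilbert norm is $G$-invariant, hence $A^+$-invariant, and since $\|\cdot\| = S_{0,N} \le S_{k,N}$ it lies in the class of $A^+$-invariant seminorms dominated by $S_{k,N}$, so $\|\cdot\| \le S_{k,N}^{A^+}$. For the upper bound I compute $S_{k,N}(\pi(a_t)v)$ explicitly. Writing $X$ for the basis vector of $\nf$, the intertwining identity $d\pi(X)\pi(g) = \pi(g)\,d\pi(\Ad(g^{-1})X)$ together with $\Ad(a_t^{-1})X = t^{-2}X$ iterates to $d\pi(X)^j\,\pi(a_t) = t^{-2j}\,\pi(a_t)\,d\pi(X)^j$. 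Unitarity of $\pi(a_t)$ then gives $S_{k,N}(\pi(a_t)v) = \sum_{j=0}^k t^{-2j}\|d\pi(X)^j v\|$, which tends to $\|v\|$ as $t \to \infty$ through $A^+$. Plugging the trivial decomposition $v_1 = v$, $a_1 = a_t$ into the infimum yields $S_{k,N}^{A^+}(v) \le \|v\|$, closing the bound.

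For part (ii), the inequality $S_{k,AN}^G \le S_k^G$ is immediate from $S_{k,AN} \le S_k$ (fewer basis directions enter the sum) combined with the maximality of $S_{k,AN}^G$: any $G$-invariant seminorm dominated by $S_{k,AN}$ is automatically dominated by $S_k$. For the reverse, maximality again reduces the claim $S_k^G \le S_{k,AN}^G$ to the pointwise bound $S_k^G(v) \le S_{k,AN}(v)$, since $S_k^G$ is itself $G$-invariant. To establish this it suffices to exhibit, for each $\varepsilon>0$, a decomposition $v = \sum_i v_i$ with Iwasawa factors $g_i = k_i p_i \in KAN$ such that $\sum_i S_k(\pi(g_i)v_i) \le S_{k,AN}(v) + \varepsilon$.

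The idea is to use a partition of unity on the compact group $K$ to break $v$ into pieces adapted to $K$-translates, then rewrite the $\kf$-derivatives of $\pi(k_i)\pi(p_i)v_i$ as derivatives in the directions $\Ad(k_i^{-1})(\af\oplus\nf)$ applied to $\pi(p_i)v_i$, with coefficients controlled uniformly by compactness of $K$. The main obstacle will be that no single translation in $G$ can carry a nonzero $\kf$-direction into $\af\oplus\nf$ (the adjoint orbit of an elliptic element of $\kf$ is disjoint from $\af\oplus\nf$), so the $\kf$-derivatives can only be absorbed at the level of the infimum of seminorms; matching $S_{k,AN}(v)$ on the nose rather than up to a multiplicative constant is the delicate point where the compactness of $K$ has to be used sharply.
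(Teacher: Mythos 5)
Part (i) of your argument is correct and is almost certainly what the paper intends: the lower bound by the ``largest invariant seminorm dominated by $N$'' characterization, and the upper bound by the contraction $\Ad(a_t^{-1}){\bf e} = t^{-2}{\bf e}$ together with unitarity, so that $S_{k,N}(\pi(a_t)v) = \sum_{j\le k} t^{-2j}\|d\pi({\bf e})^j v\| \to \|v\|$ as $t\to\infty$ through $A^+$.

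For part (ii) there is a genuine gap, and I think it is worth being precise about its source. You correctly observe that, by the maximality characterization, the non-trivial inclusion $S_k^G \le S_{k,AN}^G$ is \emph{equivalent} to the pointwise bound $S_k^G(v) \le S_{k,AN}(v)$. You also correctly observe that this is delicate: no $g\in G$ satisfies $\Ad(g^{-1})\kf \subset \af\oplus\nf$ (elliptic elements cannot be conjugated into a Borel), so the extra $\kf$-derivative in $S_k$ cannot be absorbed by a single translate, only by a genuine decomposition $v = \sum_i v_i$. At that point you stop, and the sketch you give (partition of unity on $K$, compressing $\kf$-derivatives into $\af\oplus\nf$-directions) would at best produce a bound with a multiplicative constant $C = C(k)>1$, not the claimed equality. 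Indeed the theorem proved later in this very section --- $S_k^G \le C\, S_{k,A}^G$ --- carries exactly such a constant and takes several pages; an on-the-nose inequality $S_k^G \le S_{k,AN}$ would be a stronger and less plausible statement.

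The underlying issue is that the statement of (ii) as printed appears to be a misprint. It is invoked in the proof of the theorem on invariant Sobolev norms to pass from $S_{k,A\overline N}^G$ to $S_{k,A}^G$; that step requires $S_{k,AN}^G = S_{k,A}^G$ (applied to $\overline N$ in place of $N$), not $S_{k,AN}^G = S_k^G$. The corrected statement does have an easy proof, and it runs parallel to (i): the inclusion $S_{k,A}^G \le S_{k,AN}^G$ is immediate from $S_{k,A}\le S_{k,AN}$; for the reverse, by maximality it suffices to show $S_{k,AN}^G \le S_{k,A}$, and for the trivial decomposition $v = v$ with $g = a_t$ one computes (using $\Ad(a_t^{-1}){\bf h} = {\bf h}$ and $\Ad(a_t^{-1}){\bf e} = t^{-2}{\bf e}$)
\[
S_{k,AN}(\pi(a_t)v) \;=\; \sum_{\alpha_1+\alpha_2\le k} t^{-2\alpha_2}\,\|d\pi({\bf h})^{\alpha_1} d\pi({\bf e})^{\alpha_2} v\| \;\longrightarrow\; S_{k,A}(v)
\]
as $t\to\infty$, so $S_{k,AN}^G(v)\le S_{k,A}(v)$. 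This matches the paper's description of the lemma as ``easy'' and matches the actual use made of it; I would suggest you verify the statement against \cite{KSI}, Lemma~6.5 and then give this short argument rather than attempting the literal $S_{k,AN}^G = S_k^G$, for which I do not see a proof.
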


\begin{proof} Easy; see \cite{KSI}, Lemma 6.5 for the 
general statement. 
\end{proof}

The following Theorem is fundamental (\cite{KSI}, Prop. 6.6). 

\begin{thm} Let $(\pi,\H)$ be an irreducible unitary 
representation of $G$. Let $k\in \Z_{\geq 0}$. Then there 
exists a constant $C=C(k,\pi)$ such that 
$$S_k^G(v)\leq C \cdot S_{k,A}^G(v) \qquad (v\in\H^\infty)\, .$$
\end{thm}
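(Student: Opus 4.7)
My plan combines Lemmas \ref{l=pr}(i) and (ii) with an Iwasawa-based refinement argument.

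By Lemma \ref{l=pr}(ii), $S_k^G = S_{k,AN}^G$, so it suffices to establish
$$ S_{k,AN}^G(v) \le C(k,\pi)\cdot S_{k,A}^G(v). $$
The reverse inequality $S_{k,A}^G \le S_{k,AN}^G$ is immediate from $\af\subset \af+\nf$.

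I would start from a near-optimal $G$-decomposition $v = \sum_i \pi(g_i) v_i$ with $\sum_i S_{k,A}(v_i)\le S_{k,A}^G(v)+\epsilon$. For each $v_i$, apply Lemma \ref{l=pr}(i): choose an $A^+$-refinement $v_i=\sum_j \pi(a_{ij}) w_{ij}$ with $\sum_j S_{k,N}(w_{ij})\le\|v_i\|+\epsilon_i$. Since $\af$ is abelian, $d\pi(H)$ commutes with $\pi(a)$ for $a\in A$, so this refinement can be arranged so as to preserve $\af$-Sobolev control: $\sum_j S_{k,A}(w_{ij})\le S_{k,A}(v_i)+\epsilon_i'$. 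Assembling the refined pieces yields a $G$-decomposition $v=\sum_{i,j}\pi(g_i a_{ij})w_{ij}$, from which
$$ S_{k,AN}^G(v) \le \sum_{i,j} S_{k,AN}(w_{ij}). $$

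The heart of the matter is then to bound each $S_{k,AN}(w_{ij})$ by a combination of $S_{k,A}(w_{ij})$ and $S_{k,N}(w_{ij})$. Using $[H,X]=2X$ for $H\in\af$ and $X\in\nf$, a short induction gives the operator identity $d\pi(H)^\alpha d\pi(X)^\beta=d\pi(X)^\beta(d\pi(H)+2\beta)^\alpha$. Expanding binomially, every mixed derivative of order at most $k$ appearing in $S_{k,AN}(w)$ is a polynomial combination (with $k$-dependent coefficients) of products $d\pi(X)^\beta d\pi(H)^\alpha w$ with $\alpha+\beta\le k$. The inner factor $d\pi(H)^\alpha w$ is controlled by $S_{k,A}(w)$, and the outer $d\pi(X)^\beta$-action contributes a further $S_{k,N}$-type factor which can be absorbed via Lemma \ref{l=pr}(i) on the level of $A^+$-infima. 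Combining gives a constant $C(k,\pi)$ with
$$ \sum_{i,j} S_{k,AN}(w_{ij}) \le C(k,\pi)\sum_i\bigl(S_{k,A}(v_i)+\|v_i\|\bigr)\le 2C(k,\pi)\sum_i S_{k,A}(v_i), $$
using $\|v_i\|\le S_{k,A}(v_i)$. Taking infimum over the initial $G$-decompositions then yields the theorem.

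The principal obstacle is the reordering step: the naive pointwise estimate $S_{k,AN}(w)\le C(S_{k,A}(w)+S_{k,N}(w))$ cannot hold without some use of the $G$-infimum, since mixed derivatives genuinely cannot be dominated by the pure ones on $\H^\infty$. The proof thus requires a careful interleaving of the PBW reordering with the $A^+$-refinement of Lemma \ref{l=pr}(i), so that the infimum operation absorbs the ``bad'' mixed contributions at each stage. This is also where the representation-dependence of the constant $C(k,\pi)$ enters, via the polynomial coefficients produced by the commutator expansion.
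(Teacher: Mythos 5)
The proposal contains genuine gaps at the two steps you yourself flag as "the heart of the matter," and these gaps are in fact where all of the work in the paper's proof lives.

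First, the claim that the $A^+$-refinement of Lemma \ref{l=pr}(i) "can be arranged so as to preserve $\af$-Sobolev control," i.e.\ $\sum_j S_{k,A}(w_{ij}) \le S_{k,A}(v_i) + \epsilon_i'$, is not justified, and there is no reason to expect it. Lemma \ref{l=pr}(i) only asserts that the $A^+$-invariant $N$-Sobolev norm collapses to the Hilbert norm; the decompositions that approach that infimum are chosen to minimize $N$-derivatives and may badly inflate $A$-derivatives. The fact that $d\pi(H)$ commutes with $\pi(a)$ lets $A$-derivatives distribute through the sum term by term, but it gives no control over $\sum_j \|d\pi(H)^m w_{ij}\|$ versus $\|d\pi(H)^m v_i\|$ — the pieces are not orthogonal. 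You would need to produce a single decomposition that is simultaneously near-optimal for the $N$-infimum and does not blow up the $A$-Sobolev norm, which is a nontrivial statement that must be proved, not arranged.

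Second, the absorption step is not actually performed: "the outer $d\pi(X)^\beta$-action contributes a further $S_{k,N}$-type factor which can be absorbed via Lemma \ref{l=pr}(i) on the level of $A^+$-infima" is an assertion, not an argument. The vector $d\pi(H)^\alpha w$ is not $w$; nothing in the proposal relates $S_{k,N}(d\pi(H)^\alpha w)$ to $S_{k,A}(w)$ or $\|w\|$. You correctly observe that the naive pointwise bound $S_{k,AN}(w) \le C\bigl(S_{k,A}(w)+S_{k,N}(w)\bigr)$ is false, which is precisely why the PBW-reordering plus $A^+$-shifting cannot close the argument in the abstract. The paper's actual proof is different in kind: it passes to the explicit non-compact realization of $\pi_\lambda$, where the generators act by the concrete vector fields (\ref{(6.2)})--(\ref{(6.6)}), and exploits that $d\pi_\lambda(\mathbf{f})$ dominates the derivative near $|x|=\infty$, that $d\pi_\lambda(\mathbf{e})$ dominates on bounded sets, and that on dyadic annuli $2^{-j-1}\le|x|\le 2^{-j+1}$ the $N$-derivatives become radial (hence $A$-)derivatives. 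It then builds a Littlewood--Paley partition $\mathbf{1} = \tau + \tau_m + \sum \phi_j$, moves each dyadic piece by a specific $b_{2^{-j}} \in A$ to a fixed annulus, and runs careful Leibniz/$L^\infty$ estimates on the cutoffs. This spatial localization is exactly what supplies the control that your algebraic reordering cannot: the "bad" mixed contributions are not absorbed by an abstract infimum, they are killed region by region by choosing the right shift. Your outline identifies the relevant tools (Lemma \ref{l=pr}, the commutator $[\mathbf{h},\mathbf{e}]=2\mathbf{e}$) but does not supply the geometric decomposition that makes the theorem true.
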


\begin{proof} We will only treat the case of $\pi=\pi_\lambda$. 
We remark that 
$$\H^\infty=\{ f\in C^\infty(\R)\: |x|^{i\lambda -1} f({1\over 
x})\in C^\infty(\R)\} $$
and introduce some standard notation

We use a usual basis for the Lie algebra of $\g$  
$${\bf h} =\begin{pmatrix} 1 & 0 \\ 0 & -1\end{pmatrix} , \qquad {\bf e}=\begin{pmatrix} 0 & 1\\ 0 & 0
\end{pmatrix}, 
\qquad {\bf f}=\begin{pmatrix} 0 & 0 \\ 1& 0\end{pmatrix}\, .$$
Then $\af=\R {\bf h}$, $\nf=\R {\bf e}$ and $\oline \nf=\R {\bf f}$. 
With ${\bf u} ={\bf e}-{\bf f} $ we have $\kf=\R {\bf u}$. 
Differentiating the action (\ref{(6.1)}) one obtains the formulas 

\begin{align} \label{(6.2)}
d\pi_\lambda({\bf h})&= (i\lambda -1 ) -2 x{d\over dx}\,, \\ 
\label{(6.3)} d\pi_\lambda({\bf e})&= - {d\over dx}\,,  \\  
\label{(6.4)} d\pi_\lambda({\bf f})&= (1-i\lambda)x + x^2 {d\over dx}\,,\\
 \label{(6.5)} d\pi_\lambda({\bf u})&= (i\lambda-1) - (1+x^2) {d\over dx}\,, \\ 
 \label{(6.6)} d\pi_\lambda({\bf e}+{\bf f})&= (1-i\lambda)x - (1-x^2) {d\over dx}\, .
\end{align}

We also define the {\it radial  operators} 
by 
$$(R_jf)(x)=(x^j {d^j\over dx^j}f)(x)$$
and define the {\it radial Sobolev norms} by 

$$S_{k, {\rm rad}}(f)=\sum_{j=0}^k \|R_j f\|.$$
{}From the action of $d\pi_\lambda({\bf h})$ and $R^j$ it is clear that 
there exists  a constant $C>0$, depending on $k$ and $\lambda$, such that for 
all $f\in {\mathcal S}(\R)$
\begin{equation} \label{(6.7)} {1\over C} S_{k,{\rm rad}} (f)\leq 
S_{k,A}(f)\leq C S_{k, {\rm rad}}(f)\, .\end{equation} 

We wish to point out that in 
(\ref{(6.2)}) and (\ref{(6.4)}) the coefficient of the derivative 
term has a zero, consequently $S_k(v)$ can not be majorized by $S_{k,A\oline 
N}(v)$ or by $S_{k,A}(v)$ in general. However, we shall show in the next 
Proposition that there is such a relationship for the $G-$invariant Sobolev 
norms.  

\par The $A$ action on $K/M\cong S^1$ has two fixed points, corresponding to 
the two Bruhat cells. In the non-compact realization $N$ they become the origin 
and the point at infinity. We shall estimate $ S_k^G (f)$ by using first a 
cutoff function at infinity, $\oline \nf$, and an elementary estimate there. Near 
the origin a dilated cutoff localizes sufficiently high derivatives of $f$ to 
get an estimate. Away from the fixed points, motivated by an argument in \cite{BR} 
 and classical Littlewood-Paley theory, we use a family of suitably dilated cutoff functions 
which compress the $\nf$ derivatives in the definition of $G$-invariant norm to {\it radial} 
derivatives thereby obtaining the desired 
estimate. 

For $j\in \Z$ we denote by $I_j$ the set $\{ x\in\R \: 2^{-j-1}\leq |x|\leq 
2^{-j+1}\}$. 
For a function $\psi$ on $\R$ we write $\psi_j(x)=\psi(2^j x)$. Notice that if 
$\psi$ is supported in $I_0$ then $\psi_j$ is supported in $I_j$, and 
$${\rm supp}(\psi_j)\ \cap \ {\rm supp} (\psi_{j+1}) \subseteq \lbrack {1\over 
2^{j+1}}, {1\over 2^j}\rbrack.$$  
We take a smooth, non-negative function $\phi$ supported in $I_0$ and such that 
for every $m\in \N_0$ we have 
$$\sum_{j=0}^m \phi_j(x)=\begin{cases} 0 & \hbox{if}\   |x|\leq  2^{-m-1}\, ,\\
1 & \hbox{if}\  2^{-m}\leq |x|\leq 1\,, \\ 
0 & \hbox{if} \  2\leq |x|\end{cases}$$

\par Choose a nonnegative function $\tau\in C^\infty(\R)$ with support in $\{ 
x\in \R\: 1\leq |x|\}$ 
such that $(\tau+\phi)(x)=1$ for $|x|\geq 1$. 
Finally for each $m\in \N$ define the function $\tau_m\in C_c^\infty(\R)$ by 
$\tau_m ={\bf 1} -\tau-\sum_{j=0}^m \phi_j$. Notice that $\operatorname{supp} 
\tau_m\subset \{x\in\R\mid   |x|\leq 2^{-m}\}$
and $\tau_m(x)=1$ for $|x|\leq 2^{-m-1}$. From the properties of the $\phi_j$ 
and $\tau$ it is easy to see that for any $l\geq 1$, $\tau^{(l)}_m (x) = 
-2^{lm}\phi^{(l)}(2^mx)$.
\par Let $f\in \H^\infty$. Since
\begin{align*} {\bf 1} & =  \tau + {\bf 1} - \tau\cr & = \tau + \tau_m + \sum_{j=0}^m 
\phi_j\\
& = \tau + \phi + \tau_m + \sum_{j=1}^m \phi_j\, ,\end{align*}

then $$ f = (\tau + \phi)f + \tau_mf + \sum_{j=1}^m {\phi_jf}. $$
For any choices of $g, g_1,\ldots, g_m\in G$, using the definition of $S_k^G$, 
we get
\begin{equation} \label{(6.8)} S_k^G(f)\leq S_k((\tau +\phi) f) + S_k(\pi_\lambda(g)(\tau_m f)) + \sum_{j=1}^m 
S_k(\pi_\lambda(g_j) (\phi_j f))\, . \end{equation}
\par First we consider the term $S_k((\tau +\phi)f)$. From an examination of 
formulas (\ref{(6.2)}) - (\ref{(6.4)}) one sees that $S_k((\tau +\phi) f)\leq C S_{k,\oline 
N}((\tau +\phi)f)$
for all $f\in \H^\infty$. 
(Throughout this proof $C$ will denote a constant depending only on $k$, $\tau$, $\phi$ and $\lambda$.)
Hence we have 
$$S_{k}((\tau +\phi)f)\leq CS_{k,\oline N}((\tau +\phi) f)\leq C S_{k,\oline 
N}(f)$$
for all $f\in \H^\infty$.  Majorizing this term in (6.8) we get 
\begin{equation} \label{(6.9)} S_k^G(f)\leq C S_{k,\oline N}(f) + S_k((\pi_\lambda(g)\tau_m f))+ 
\sum_{j=1}^m S_k(\pi_\lambda(g_j) (\phi_j f))\end{equation}
for all $f\in \H^\infty$. 

\par Next we specify a good choice of the elements 
$g, g_1,\ldots, g_m\in G$. For every $t>0$ denote by $b_t$ the element 
$$b_t =\begin{pmatrix} {1\over \sqrt t} & 0 \\ 0 & \sqrt t\end{pmatrix} \in A.$$
From (\ref{(6.1)}) it follows that 
$$(\pi_\lambda(b_t)f)(x)= t^{{1\over 2}(1-\lambda)} f(tx)$$
for all $t>0$ and $x\in \R$. 
Take $g_j=b_{2^{-j}}$ for all $1\leq j\leq m$ and $g = b_{2^{-(m+1)}}$. 
Notice that for every $m$ all the $\pi_\lambda(g_j) (\phi_j f)$ are  
supported in $[-2,2]$, as is $\pi_\lambda(g) (\tau_m f)$. For any smooth 
function $h$ supported in $[-2,2]$ we can conclude from the formulas (\ref{(6.2)}) - 
(\ref{(6.5)}) that
$S_k(h)\leq CS_{k,N}(h)$. Using this in (6.9) we get
$$S_k^G(f)\leq C S_{k,\oline N}(f) +C S_{k,N}(\pi_\lambda(g) (\tau_mf)) + 
C\sum_{j=1}^m S_{k,N}(\pi_\lambda(g_j) (\phi_j f))\leqno(6.10)$$
for all $f\in \H^\infty$. 
\par We estimate  $S_{k,N}(\pi_\lambda(g) (\tau_mf))$.  For this we use Leibniz 
on $\tau_m f$ and $L^\infty$ estimates on $\tau_m^{(j)} = 
-2^{jm}\phi^{(j)}(2^mx)$. From (\ref{(6.3)}) one sees that $S_{k,N}(h)=\sum_{l=0}^k 
\|h^{(l)}\|$. Then
\begin{align*}S_{k,N}&(\pi_\lambda (g) (\tau_m f))  = \sum_{l=0}^k  \| 
{d^l\over dx^l} 2^{-{(m+1)\over 2}(1-\lambda)} (\tau_m f)(2^{-(m+1)}\cdot)\|\\
& =\sum_{l=0}^k \vert 2^{-{(m+1)\over 2}(1-\lambda)}\vert\Big[
\int\Big|\sum_{n=0}^l 2^{-(m+1)l}{l \choose l-n} \cdot \\ 
& \qquad \cdot \tau_m^{(l-n)}(2^{-(m+1)}x) f^{(n)}
(2^{-(m+1)}x)\Big|^2 \ dx\Big]^{1\over 2}\\
& \leq\sum_{l=0}^k \vert 2^{-{(m+1)\over 2}(1-\lambda)}\vert \sum_{n=0}^l
\Big[\int_{|x|\leq 2}\Big| 2^{-(m+1)l}{l \choose l-n}\cdot \\ 
& \qquad \cdot  \tau_m^{(l-n)}(2^{-(m+1)}x) 
f^{(n)}(2^{-(m+1)}x)\Big|^2 \ dx \Big]^{1\over 2}\\
& =\sum_{l=0}^k \Big|2^{{(m+1)\over 2}\lambda}\Big| \sum_{n=0}^l
\Big[\int_{|y|\leq {1\over 2^m}} \Big|2^{-(m+1)l} {l \choose l-n} \tau_m^{(l-n)}(y) f^n(y) \Big|^2\  dy 
\Big]^{1\over 2}\\
& \leq\sum_{l=0}^k | 2^{{(m+1)\over 2}\lambda}| \sum_{n=0}^l{l \choose l-n}{ \|2^{(l-n)m}\phi^{(l-n)}\|_\infty\over 2^{(m+1)l}}
\Big[\int_{|y|\leq {1\over 2^m}} |f^{(n)} (y) |^2\  dy \Big]^{1\over 2}\\
& =\sum_{n=0}^k | 2^{{(m+1)\over 2}\lambda}| {1\over 2^{mn}}
\sum_{l=n}^k{l \choose l-n}{\|\phi^{(l-n)}\|_\infty\over 2^l} \Big[\int_{|y|\leq {1\over 2^m}} 
|f^{(n)}(y)|^2\ dy\Big]^{1\over 2}\\
& =\sum_{n=0}^k | 2^{{(m+1)\over 2}\lambda}| {1\over 2^{(m+1)n}}
\sum_{j=0}^{k-n}{j+n \choose n} {\|\phi^{j}\|_\infty\over 2^j} \Big[\int_{|y|\leq {1\over 2^m}} 
|f^{(n)}(y)|^2\ dy\Big]^{1\over 2}\\
& \leq\big(\sum_{j=0}^k{\|\phi^{(j)}\|_\infty\over {j!2^j}}\big)\sum_{n=0}^k 
{k!\over 
n!2^{(m+1)n}}\Big[\int_{|y|\leq {1\over 2^m}}| f^{(n)}(y)|^2 \ dy\Big]^{1\over 
2}.\end{align*}
Now $k$ is fixed and each of the at most $k$ derivatives $f^{(n)}$ is in $L^2$, 
hence the integrals can be made uniformly small. So for each $f$ we can choose 
an $m$ so that the last line above is at most $\|f\|$. Then we have
 
$$S_k^G(f)\leq C S_{k,\oline N}(f) + C\|f\| + 
C \sum_{j=1}^m S_{k,N}(\pi_\lambda(g_j) (\phi_j f))$$
for any $f\in \H^\infty$.   Thus 
 we obtain   that 
\begin{equation} \label{(6.12)} S_k^G(f)\leq C S_{k,\oline N }(f) + C\|f\| +
C\sum_{l=0}^k \sum_{j=1}^m \| {d^l\over dx^l} (2^{-{j\over 2}(1-i\lambda)}\phi 
f(2^{-j}\cdot))\|\, .\end{equation} 
As in the long computation above,  
using Leibniz on $\phi f$, $L^\infty$ estimates on $\phi^{(j)}$, and majorizing the binomial coefficients we get
\begin{align*}\sum_{l=0}^k \sum_{j=1}^m \| {d^l\over dx^l} (2^{-{j\over 2}}\phi 
f(2^{-j}\cdot))\|
& \leq C 
\sum_{l=0}^k \sum_{j=1}^m \Big(\int_{I_0}  2^{-j-2l}|f^{(l)} (2^{-j}x)|^2 \ 
dx\Big)^{1\over 2}\\ 
& =  C \sum_{l=0}^k \sum_{j=1}^m \Big(\int_{I_j} 2^{-2l} |f^{(l)} (x)|^2 \ 
dx\Big)^{1\over 2}\\ 
&\leq 4  C \sum_{l=0}^k \sum_{j=1}^m \Big(\int_{I_j}  | x^lf^{(l)} (x)|^2 \ 
dx\Big)^{1\over 2}\\ 
&\leq 4  C S_{k,{\rm rad}} (f) \leq 4 C S_{k,A} (f), 
\end{align*}
where the last inequality follows from (\ref{(6.7)}) and again $ C$ depends only on $\tau$, $\phi$, $k$ and $\lambda$. 
Thus we get from (\ref{(6.12)}) and (\ref{(6.13)}) that 
$$S_k^G (f) \leq C S_{k,\oline N}(f) +  C\|f\|  + C S_{k,A}(f)\leq  C\|f\| + C 
S_{k,A\oline N}(f)$$
for all $f\in \H^\infty$. Thus 
$$S_k^G \leq C S_{k,A\oline N}^G$$ and, using Lemma \ref{l=pr}(ii),  $S_k^G \leq C 
S_{k,A}^G$ as was to be shown.
\end{proof}

With regard to the above theorem I want to pose 
the following 

\bigskip\noindent  
{\bf Problem:} {\it Formulate and possibly prove the above 
result for all semisimple groups.}

\bigskip We come to the main result of this section, see \cite{KSI}, Th. 6.7: 
the estimate for $S_k^G(\pi(a_\e)v)$.  
We will only explain the idea and refer to \cite{KSI} for a discussion 
in full detail. 
We fix on the case $\pi=\pi_\lambda$ and observe 
that, up to constant: 
$$[\pi(a_\e)v](x)= {1\over (1+e^{i\pi(1-\e)}x^2)^{{1\over 2}(1-i\lambda)}}\qquad (x\in\R)$$ 
Hence $\pi(a_\e)v(x)$ develops singularities at 
$x=\pm 1$ which are logarithmic in the $L^2$-sense, see Proposition \ref{p=est} from above.  
Taking the $k$-th Sobolev norm increases the singularity 
accordingly; one verifies for $k\geq 1$ that 

$$S_k(\pi(a_\e)v )\asymp \e^{-k}\, .$$
It is so remarkable that the situation is much different 
for $S_k^G (\pi(a_\e)v$. Why? Observe that 

\begin{equation} \label{e=ah} S_{k,H}(\pi(a_\e)v)\asymp \|\pi(a_\e)v\|\end{equation}
as the fixed points of $H$ are precisely $x=\pm 1$, the 
loci where the function $\pi(a_\e)v$ develops 
singularities (cf. with (\ref{(6.6)})). 
Now with 
$$k_0={1\over\sqrt{2}} \begin{pmatrix} 1 & 1 \\ -1 & 1\end{pmatrix}\in K$$
there is an element which rotates $\af$ to $\h$. Hence 
$$S_{k,A} (\pi(k_0)\pi(a_\e)v)= S_{k,H}(\pi(a_\e)v)\, $$
and combined with (\ref{e=ah}) we arrive 
at the hardest result in this article. 

\begin{thm}\label{t=so} Let $(\pi,\H)$ be a unitary irreducible 
representation of $G$ and $v\in\H$ a $K$-fixed vector.
Let $k\in\Z_{\geq 0}$.  
Then there exists a constant $C= C(\pi, k)$ such that 
$$S_k^G (\pi(a_\e)v)\leq C \|\pi(a_\e)v\|$$
for all $\e>0$ small. 
\end{thm}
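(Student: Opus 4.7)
The plan is to combine three ingredients already prepared in the section: the previous theorem bounding $S_k^G$ by $S_{k,A}^G$, the explicit formulas (\ref{(6.2)})-(\ref{(6.6)}) for the derived representation, and the geometric fact that $H$ and $A$ are conjugate inside $G_\C$ (they are even $K$-conjugate). I would restrict attention to the spherical principal series $\pi=\pi_\lambda$ and the normalized vector $v=v_K$, since the $K$-spherical assumption plus irreducibility reduces the analysis to this one-parameter family (complementary series and the trivial representation are handled by the same formulas, and discrete series have no $K$-spherical vector).

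The conceptual core is the key estimate
\begin{equation}\label{coreH}
S_{k,H}(\pi(a_\e)v)\;\asymp\;\|\pi(a_\e)v\|.
\end{equation}
The reason is structural: the function $[\pi(a_\e)v](x)$ develops its only singularities (logarithmic in $L^2$) at the two points $x=\pm 1$, and those two points are precisely the fixed points of $H$ on $\P^1(\R)$. Correspondingly, by (\ref{(6.6)}) the infinitesimal generator ${\bf e}+{\bf f}$ of $\h$ acts as $(1-i\lambda)x-(1-x^2)\tfrac{d}{dx}$, whose derivative coefficient $(1-x^2)$ vanishes to first order at $\pm 1$. Thus each application of $d\pi_\lambda({\bf e}+{\bf f})$ trades one $x$-derivative of $\pi(a_\e)v$ against a factor $(1-x^2)$, which is exactly the scale on which the singularity sits; iterating $k$ times, the higher $H$-Sobolev norms remain uniformly comparable to the $L^2$ norm as $\e\to 0$. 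Making this precise is the main technical obstacle: one must check, by induction on $k$, that $d\pi_\lambda({\bf e}+{\bf f})^j\bigl(\pi(a_\e)v\bigr)$ stays in $L^2$ with norm $O(\|\pi(a_\e)v\|)$, and this is the computation that I expect to be the most delicate, since one has to track how the binomial expansion of the first-order operator interacts with the $\e$-dependent singularity at $x=\pm 1$.

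Once (\ref{coreH}) is in hand, I would transport the estimate from $H$ to $A$ using the element
$$k_0=\tfrac{1}{\sqrt 2}\begin{pmatrix} 1 & 1 \\ -1 & 1\end{pmatrix}\in K,\qquad \Ad(k_0)\af=\h.$$
For any $w\in\H^\infty$ the identity $d\pi(\Ad(k_0)Z)=\pi(k_0)\,d\pi(Z)\,\pi(k_0)^{-1}$ together with the unitarity of $\pi(k_0)$ yields
$$S_{k,A}\bigl(\pi(k_0)w\bigr)=S_{k,H}(w).$$
Applying this to $w=\pi(a_\e)v$ and using the definition of the $G$-infimum seminorm gives
$$S_{k,A}^G(\pi(a_\e)v)\;\leq\;S_{k,A}(\pi(k_0)\pi(a_\e)v)\;=\;S_{k,H}(\pi(a_\e)v)\;\asymp\;\|\pi(a_\e)v\|.$$

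Finally, the previous theorem $S_k^G(w)\leq C\cdot S_{k,A}^G(w)$ (valid for all $w\in\H^\infty$) supplies the remaining step:
$$S_k^G(\pi(a_\e)v)\;\leq\;C\cdot S_{k,A}^G(\pi(a_\e)v)\;\leq\;C'\,\|\pi(a_\e)v\|,$$
which is the desired conclusion. To summarize the logic: one uses $K$-invariance to rotate the singular axis of $\pi(a_\e)v$ from the $H$-direction (where singularities are tame because of the vanishing coefficients) to the $A$-direction required by the reduction theorem, and then the reduction theorem closes everything up. The hard part is genuinely (\ref{coreH}); the rest is a clean transport-of-structure argument.
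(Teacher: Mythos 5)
Your proposal follows the same route as the paper's own sketch: the key estimate $S_{k,H}(\pi(a_\e)v)\asymp\|\pi(a_\e)v\|$ motivated by the vanishing at $x=\pm1$ of the derivative coefficient in $d\pi_\lambda({\bf e}+{\bf f})$, followed by the rotation via $k_0\in K$ to produce an $S_{k,A}$-bound that dominates $S_{k,A}^G$, and finally the reduction theorem $S_k^G\leq C\,S_{k,A}^G$. You also correctly flag the estimate $S_{k,H}(\pi(a_\e)v)\asymp\|\pi(a_\e)v\|$ as the genuinely hard point, which the paper itself only sketches and delegates to \cite{KSI}.
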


I expect the theorem from above to be true for all 
$K$-finite vectors $v$ with the reservation that 
$C=C(\pi,K)$ depends on the occuring $K$-types in the 
support of $v$ in addition. In \cite{KSI} we conjecture 
(Conjecture C) that the the estimate holds even for arbitrary 
semisimple Lie groups. This is very difficult. For real 
rank one we could establish this for the $K$-fixed vector 
in \cite{KSI}.

\section{Harmonic analysis on the crown}
\subsection{Holomorphic extension of eigenfunctions}
Let 
$$\Delta= -y^2(\partial_x^2 + \partial_y^2)$$
be the Laplace-Beltrami operator on $X$. For 
$\mu\in\C$ we consider the eigenvalue problem 

$$\Delta \phi = \mu(1-\mu)\phi\, .$$
We observe that solutions $\phi$ are necessarily analytic 
functions as $\Delta$ is an elliptic operator. 
Analytic functions admit holomorphic 
extensions to some complex neighborhood of $X$ in $X_\C$. 
Further, as $G$ commutes with $\Delta$, the resulting domain 
$D_\phi\subset X_\C$ attached to $\phi$ is $G$-invariant. 
By now it should be no surprise that $D_\phi=\Xi$ for generic 
choices of $\phi$. In fact it is just a disguise of the non-unitary 
version of Theorem \ref{t=m}, see  \cite{KSII}, Th. 1.1 and 
Prop. 1.3. 

\begin{thm} All $\Delta$-eigenfunctions on $X$ extend to holomorphic 
functions on $\Xi$. 
\end{thm}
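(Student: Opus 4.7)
The plan is to realize each eigenfunction as a Poisson integral over the boundary circle $K/M\simeq \P^1(\R)$ and then push the Poisson kernel holomorphically into $\Xi$ using the tools already established in the paper, most notably Proposition \ref{p=h}(iii) and the complex convexity Theorem \ref{t=cc}.

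First I would invoke Helgason's theorem: every $\Delta$-eigenfunction $\phi$ on $X$ with eigenvalue $\mu(1-\mu)$ admits a Poisson representation
$$\phi(gK)=\int_{K/M} a(g^{-1}k\cdot x_0)^{-2\mu\rho}\,dT(kM)$$
for a (possibly non-unique) hyperfunction $T$ on $K/M$, where $a:X\to A$ is the $A$-projection in the Iwasawa decomposition $G=NAK$. Thus it suffices to extend the kernel $(g,k)\mapsto a(g^{-1}k\cdot x_0)^{-2\mu\rho}$ holomorphically in the first slot to $\Xi$ with enough regularity in the second slot to allow integration against hyperfunctions.

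The holomorphic extension in the first slot is precisely what Proposition \ref{p=h} delivers: $\Xi\subset N_\C A_\C\cdot x_0$, and on $\Xi$ the $A$-projection $a_\C$ admits a single-valued holomorphic logarithm with $\log a_\C(x_0)=0$. Hence $z\mapsto a_\C(k^{-1}z)^{-2\mu\rho}$ is a well-defined holomorphic function on $\Xi$ for every $k\in K$, and these assemble into a map $\Xi\to \mathcal{O}(K)$ that is continuous in the compact-open topology by elementary estimates. This already shows that the Poisson integral against a real-analytic density extends holomorphically, which handles the analytic case.

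The main obstacle is to upgrade this to genuine hyperfunctions on $K/M$: since hyperfunctions are not dual to continuous functions but to real-analytic ones, one must extend the kernel $k\mapsto a_\C(k^{-1}z)^{-2\mu\rho}$ real-analytically to a complex neighborhood of $K/M$ inside $K_\C/M_\C$, with bounds uniform for $z$ ranging over compacta of $\Xi$. This is exactly where Theorem \ref{t=cc} enters: for $z=g\exp(iY)\cdot x_0$ with $Y\in\Omega$ and $k$ varying over $K$, the complex convexity theorem confines $\Im\log a_\C(k^{-1}z)$ to the segment $[-Y,Y]$, which gives the uniform bounds required to complexify $k$ slightly and to verify that the resulting kernel lies in the space of real-analytic functions on $K/M$, varying holomorphically in $z\in\Xi$. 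Pairing with $T$ then produces a holomorphic function on $\Xi$ that restricts to $\phi$ on $X$. The conclusion $D_\phi=\Xi$ (rather than something larger) is already built in, since a larger $G$-domain would force holomorphic extension of the spherical function $\phi_\lambda$ beyond $\Xi$, contradicting Theorem \ref{t=he}.
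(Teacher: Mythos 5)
Your proposal is correct, and it takes a genuinely different (and more structure-theoretic) route than what the paper does. The paper's own argument is fully explicit and specific to $\Sl(2,\R)$: it takes the boundary value $\phi_\R$ of the eigenfunction on $\partial X = \R$, writes $\phi$ as a Poisson integral against the halfplane Poisson kernel $P(z)=\tfrac{1}{\pi}\tfrac{\Im z}{z\overline z}$, and then observes that $P$ \emph{polarizes}: replacing $\overline z$ by an independent variable $w\in\overline X$ yields $P^\sim(z,w)=\tfrac{1}{2\pi i}\tfrac{z-w}{zw}$, holomorphic on $\Xi=X\times\overline X$. The holomorphic extension of $\phi$ is then obtained by integrating $(P^\sim)^\mu$ against $\phi_\R$. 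By contrast, you work in the Helgason picture: boundary hyperfunction $T$ on the compact boundary $K/M$, kernel $a_\C(k^{-1}z)^{-2\mu\rho}$ extended via Proposition~\ref{p=h}(iii), and the complex convexity Theorem~\ref{t=cc} to control $\Im\log a_\C$ uniformly so that the kernel varies real-analytically in $k$ and the pairing with a hyperfunction makes sense. Your route is precisely the one that generalizes to arbitrary semisimple $G$ (and is essentially the argument of \cite{KSII}), whereas the paper's polarization trick is an $\Sl(2,\R)$-specific shortcut exploiting the fact that $\Xi$ is a literal product $X\times\overline X$. It is worth noting that the paper itself flags that it is suppressing the convergence issues on the non-compact boundary $\R$ that the bounded (disc) model would resolve; your use of the compact boundary $K/M$ together with Theorem~\ref{t=cc} is exactly the disciplined way of dispatching those issues, so your sketch is, if anything, the more honest one. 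Your closing remark that maximality ($D_\phi=\Xi$) would follow from Theorem~\ref{t=he} is also correct, though the statement as given only asks for extension \emph{to} $\Xi$, not maximality.
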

\begin{proof} At this point it would better to switch from $X$ to its 
bounded realization: the unit disc. It has the advantage of circular 
symmetry on a compact boundary and results in a good grip 
concerning convergence problems of boundary value issues on $X$. 
However, I do not want to do that and thus certain convergence 
issues will remain untreated below. 

\par   To begin with we recall the Poisson-kernel $P$ on $X$: 
$$P(z)={1\over \pi} {\Im z\over z\cdot \oline z}\qquad (z\in X)\, .$$
Now if $\Delta \phi= \mu(1-\mu)\phi$ with $\mu\neq 0$, then there is a
generalized function $\phi_\R$ on $\R$ as boundary value 
of $\phi$ from which we can reconstruct $\phi$ via 
Poisson integration: 
$$\phi(z)=\int_\R \phi_\R (x) P^\mu (z-x)\ dx \, .$$
Now observe that $P$ admits a holomorphic extension $P^\sim$  
to $\Xi=X\times \oline X$ obtained by polarization: 

$$P^\sim(z,w)= {1\over 2\pi i} {z-w\over z\cdot w}\qquad ((z,w)\in \Xi)\, .$$
Thus $\phi$ admits a holomorphic extension $\phi^\sim$ to 
$\Xi$ by setting

$$\phi^\sim (z,w)=\int_\R \phi_\R (x) (P^\sim)^\mu (z-x,w-x)\ dx \, .$$
\end{proof}

\subsection{Paley-Wiener revisited}
Let us begin with a short disgression into history: the 
theorems  of Paley and Wiener \cite{PW} on the restriction 
of the Fourier transform to various meaningful function spaces. 
\par When dealing with Fourier analysis on $\R^n$ one often 
identifies $\R^n$ with its dual space. However, it is better not 
to do it in order to avoid confusion between the geometric
and spectral features.   

\par Let $V$ be a finite 
dimensional real vector space $V$. Its dual space shall be 
denoted $V^*$. We fix an Euclidean structure on $V$ (and hence on $V^*$) 
and normalize the 
resulting Lebesgue measures $dv$, $d\alpha$ such that the 
Fourier transform 
$$\F: L^1(V)\to C_0(V^*), \ \ f\mapsto \F(f):=\hat f; 
\ \hat f(\alpha):=\int_V e^{i\alpha(v)} f(v) \ dv \, $$
extends to an isometry $L^2(V)\to L^2(V^*)$. 
\par Actually we wish to view $V^*$ as $\hat V$, the unitary 
dual of the abelian group $(V,+)$. The isomorphism 
is given by 

$$V^*\to \hat V, \ \ \alpha\mapsto\chi_\alpha; 
\ \chi_\alpha(v)=e^{i\alpha(v)}\, .$$

For a general, say reductive, group $G$, we know from 
the work of Segal that there is a Fourier transform 
from $L^1(G)$ to a Hilbert-valued fiber bundle ${\mathcal V}\to 
\hat G_{\rm temp}$
over the tempered unitary dual $\hat G_{\rm temp}$ of $G$ which extends 
to an isometry $\F: L^2(G)\to \Gamma^2({\mathcal V})$. Here 
$\Gamma^2$ stands for the $L^2$-sections of the bundle with respect 
to the Plancherel measure which was determined explicitly by Harish-Chandra, 
 \cite{HC1}.  
  
\par Back to our original setup of $V$ and $V^*$. In the context
of Fourier transform one might ask about the image of 
certain function spaces, for instance test functions, Schwartz 
functions, their duals,  or of functions on $V$ which extend 
holomorphically to some tube domain in $V_\C =V+iV$. 
Paley-Wiener theory is concerned with the first and last 
mentioned examples in the uplisting. 
For a more serious discussion we need more
precision. 

\bigskip {\it The image of test functions.}
We want to characterize $\F(C_c^\infty(V))$. 
For that we define for every $R>0$ the 
subspace $C_R^\infty(V)$ of those test functions which are 
supported in the Euclidean ball of radius $R$. Likewise 
we define $\PW_R(V_\C^*)$ to be the space of those holomorphic 
functions $f$ on $V_\C^*$, the complexification of $V^*$, 
which satisfy the growth condition   
$$|f(\alpha +i\beta)| \ll e^{R \|\beta\|} (1+ \|\alpha\| +\|\beta\|)^{-N}
\qquad (\alpha, \beta\in V^*) $$
for all $N>0$. Then the smooth version of Theorem X   
of Paley and Wiener (cf. \cite{PW}) asserts that 
\begin{equation}\label{PW1} 
\F(C_R^\infty(V))=PW_R(V_\C^*)\qquad(\hbox{PW-I})\, .\end{equation}

\bigskip {\it The image of strip functions.} For $R>0$ we let 
$B_R$ be the ball of radius $R$ centered at the origin and define 
a tube domain in $V_\C$ by 
$$S_R = V + i B_R\, .$$
Further we define 
$$\S_R (V):=\{ f\in \O(S_R)\mid \sup_{w\in B_R} 
\int_V |f(v+iw)|^2 \ dv <\infty\}\, $$  
and simply call them strip functions. 
Then Theorem IV of Paley and Wiener \cite{PW} specializes to  
\begin{equation} \label{PW2} \F(\S_R(V))={\mathcal E}_R(V^*)
\qquad(\hbox{PW-II})\end{equation}
with 
$${\mathcal E}_R(V^*)=\{ f\in L^2(V^*)\mid  \int_{V^*} |f(\alpha)|^2 
e^{2R\|\alpha\|} \ d\alpha< \infty\}$$ 
the space of exponentially  
decaying functions $L^2$-functions on $V^*$ with decay exponent $R$. 

\bigskip We move from $V$ to $G$. As we remarked 
earlier, we have to be careful because of the symmetry break between $G$ and 
$\hat G$. So there are in fact four different types 
of Paley-Wiener theorems which are of interest: (PW-I) and (PW-II) 
and as well their inverse versions for $\F^{-1}$.   
\par Arthur did a case of (PW-I) in \cite{A} when he characterized 
the image of the $K\times K$-finite test functions 
$C_c^\infty(G)_{K\times K}$ under $\F$. 
We emphasize the subspace 
$$C_c^\infty(G)_{K\times {\bf 1}}^{{\bf 1}\times K}
\subset C_c^\infty(G)_{K\times K}$$
of functions which are fixed under right $K$-displacements. These 
functions naturally realize as $K$-finite functions on $X$. 
\footnote{As for analysis on $X$ one should 
think of it as $K$-invariant analysis on $G$.}
Then Arthur's Paley Wiener result gives us 
the image of $C_c^\infty(X)_K$ as certain entire sections 
over the complexification of the spherical unitary dual, i.e. 
$\af_\C^*/\W$. It became the bad habit to restrict even further 
to $K$-fixed functions on $X$ -- this makes the sections
scalar valued and matters reduced to some "Euclidean" Harmonic 
analysis with respect to a specific weighted measure space.   
In this simplified context a Paley-Wiener theorem for the  
inverse of (PW-I) was established for some class of 
examples \cite{P}.
A fully geometric version of the inverse of (PW-I) was recently 
obtained by Thangavelu in \cite{T}, when he showed 
that sections with compact support in a ball correspond 
to holomorphic functions on the crown with a certain 
growth condition related to the size of the support. 
We will not further delve into that 
but focus on (PW-II) instead. 

\par So far the discussion was general, but now I wish to return 
-- for the sake of the exposition -- 
to $G=\Sl(2,\R)$ and the upper halfplane $X$ where 
very concrete formulas hold. For 
$0<R\leq \pi/4$ we define a $G$-domain in $\Xi$ by 

$$\Xi_R=G\exp(i\pi/4(-R,R){\bf h})\cdot x_0\,.$$
For $R=\pi/4$ we obtain the crown and in general 
$(\Xi_R)_R$ is a filtration of $\Xi$ of $G$-invariant 
Stein domains (see \cite{GKII} for the general fact). 
We think of $\Xi_R$ as a strip domain around $X$ 
and define the  analogue of the space of strip 
functions by 

$$\S_R(X):=\{ f\in \O(\Xi_R)\mid  \sup_{r\in (0,R)} \int_G 
|f(g\exp(ir{\bf h})\cdot x_0)|^2 \ dg <\infty\}\, $$
By a theorem of Harish-Chandra , $\F$ identifies 
$L^2(X)$ with 
$$L^2\left (K/M\times i\af^*/\W, d(kM)\otimes 
{\lambda \tanh( \pi \lambda) d\lambda}\right)$$
where we have identified $i\af^*$ linearily with $\R$ subject to 
the normalization that the functional 
$c i {\bf h}\to c$ corresponds to 
$1\in\R$. 
As $\W=\Z_2$ acts as the flip on $\R$ we may safely identify 
$i\af^*/\W$ with $[0,\infty)$. Obviously $K/M$ identifies 
with the unit circle.
The Fourier transform on $G$, restricted to $K$-invariants 
is then given by 

$$\F f(kM, \lambda)=\int_X f(z)\phi_\lambda(k^{-1}z) \ dz$$

The Parseval identity for $G$ reduced to $X$ 
then states that:  
$$\int_X |f(z)|^2 \ dz= \int_{K/M} \int_0^\infty  
|(\F f)(kM,\lambda)|^2 \ d(kM) 
\ \lambda \tanh(\pi\lambda/2) d\lambda\, .$$
If we want to extend this identity by moving the $G$-orbit 
$X$ into $\Xi_R$, i.e. a contour shift, then we 
need the Plancherel theorem for $G$ (and not only of $X$). For a function 
$f\in S_R(X)$, we then get for all $r<R$: 

$$\begin{aligned}
&\int_G |f(g\exp(ir{\bf h})\cdot x_0)|^2 \ dg \\
& \qquad =
\int_{K/M} \int_0^\infty |(\F f)(kM,\lambda)|^2 
\phi_\lambda(\exp(i2r {\bf h}) \ \lambda\tanh(\pi\lambda/2) d\lambda\, .
\end{aligned}\label{Gutz}$$
In \cite{F1} Faraut named  this equality {\it Gutzmer identity} in the 
honour of Gutzmer who, in the 19th century,  investigated 
growth of Fourier coefficients with respect to analytic continuation
, \cite{F1}. 
We emphasize that $\phi_\lambda(\exp(i2r {\bf h})$ is a
positive quantity as we know from the doubling identity 
(\ref{s=pos}).   
\par Let us define the analogue of $\E_R(V^*)$ to be 

\begin{align*}\E_R(\hat G)&=\Big\{ f\in 
L^2\left (K/M\times i\af^*/\W, d(kM)\otimes \lambda\tanh(\pi\lambda/2) d\lambda\right)\mid \\
& \sup_{0\leq r<R}\int_{K/M} \int_0^\infty |(\F f)(kM,\lambda)|^2 
\phi_\lambda(\exp(i2r {\bf h}) \ \lambda \tanh(\pi\lambda/2) d\lambda<\infty\Big\}\, 
\end{align*}
and state the analogue of Theorem IV of Paley and Wiener. 

\begin{thm} For all $0\leq R\leq \pi/4$ 
$$f\in S_R(X)\iff \F(f) \in \E_R(\hat G)\, .$$ 
\end{thm}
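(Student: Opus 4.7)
The theorem is essentially a repackaging of the Gutzmer identity (\ref{Gutz}), which asserts an $L^2$-isometry between the contour $L^2(G)$-norm of $f$ at radius $r$ and the spectral $L^2$-norm weighted by $\phi_\lambda(\exp(2ir\mathbf{h}))$. Both directions are reductions to this isometry.

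For the forward direction, suppose $f \in S_R(X)$. The uniform $L^2$-bound and continuity yield, as $r\searrow 0$, a boundary value $f|_X\in L^2(X)$, so $\F f$ is defined. For each $r\in(0,R)$, apply (\ref{Gutz}) to the $L^2(G)$-function $g\mapsto f(g\exp(ir\mathbf{h})\cdot x_0)$: the left-hand side is finite by hypothesis and the right-hand side is exactly the spectral integral appearing in the definition of $\E_R(\hat G)$. Taking the supremum over $r\in(0,R)$ immediately gives $\F f\in\E_R(\hat G)$.

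For the converse, given $F\in\E_R(\hat G)$, define a candidate holomorphic extension
\[
\tilde f(z) \;=\; \int_{K/M}\int_0^\infty F(kM,\lambda)\,\phi_\lambda(k^{-1}z)\,\lambda\tanh(\pi\lambda/2)\,d\lambda \qquad (z\in\Xi_R),
\]
using the holomorphic continuation of $\phi_\lambda$ to $\Xi$ noted in the remark following the discussion of the spherical principal series. To see that this integral converges pointwise on each contour at radius $r<R$, fix $z=g\exp(ir\mathbf{h})\cdot x_0$; by $G$-equivariance and $K$-biinvariance of $\phi_\lambda$ one reduces to $z=\exp(ir\mathbf{h})\cdot x_0$ and applies Cauchy-Schwarz in the spectral integral, using the pointwise estimate $|\phi_\lambda(k^{-1}z)|^2 \leq \phi_\lambda(\exp(2ir\mathbf{h}))$. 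This estimate is nothing but Cauchy-Schwarz applied to the matrix-coefficient representation of $\phi_\lambda$ combined with the doubling identity (\ref{s=pos}). Holomorphicity of $\tilde f$ on $\Xi_R$ then follows by differentiation under the integral (Morera plus dominated convergence), and the $L^2$-bound on each contour is precisely (\ref{Gutz}) read in reverse, yielding $\tilde f\in S_R(X)$. Standard Fourier inversion for $L^2(X)$ confirms $\tilde f|_X=f$.

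The main obstacle is the reverse direction, specifically the combination of (a) producing the holomorphic extension and (b) controlling the holomorphically extended spherical function pointwise. A convenient workaround is to establish the correspondence first on the dense subspace of $F$ with compactly supported spectrum, where pointwise convergence of the defining integral is automatic, and then extend by continuity using the Gutzmer isometry to pass to all of $\E_R(\hat G)$.
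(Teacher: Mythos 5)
The paper does not supply a proof of this theorem: it is stated immediately after the Gutzmer identity (\ref{Gutz}) and presented as an obvious analogue of Theorem IV of Paley--Wiener, with no further argument given. So there is no paper proof to compare against; the evaluation is of your proposal on its own terms.

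Your overall strategy -- forward direction by applying (\ref{Gutz}) on each contour and taking the supremum, converse by synthesizing a holomorphic extension from the inverse Fourier integral -- is certainly what the paper has in mind, and it is the right way to go. Two technical points in the converse deserve more care, though. First, the Cauchy--Schwarz step as you have written it does not quite close: with the bound $|\phi_\lambda(k^{-1}z)|^2\leq\phi_\lambda(\exp(2ir\mathbf{h}))$ at the \emph{same} radius $r$ as the $\E_R$-control, the residual factor in Cauchy--Schwarz is $\int_0^\infty \lambda\tanh(\pi\lambda/2)\,d\lambda=\infty$. The remedy is to exploit the $\sup_{0\le r'<R}$ in the definition of $\E_R(\hat G)$: fix $r<r'<R$, weight the Cauchy--Schwarz by $\phi_{\lambda}(\exp(2ir'\mathbf{h}))$, and use the exponential decay in $\lambda$ of the ratio $\phi_\lambda(\exp(2ir\mathbf{h}))/\phi_\lambda(\exp(2ir'\mathbf{h}))$. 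This gives local uniform convergence of the integral on $\Xi_{r'}\supset\Xi_r$, hence holomorphicity. Second, your density argument (compactly supported spectrum, then extend by Gutzmer) is a legitimate workaround, but it leaves implicit why the $L^2$-on-contours limit of holomorphic functions is again holomorphic; one should spell out that contour $L^2$-convergence, via the sub-mean-value property, upgrades to locally uniform convergence on $\Xi_R$. Neither of these is a conceptual obstacle, but both need to be made explicit for the argument to be airtight. With those adjustments your proof is correct and is the natural proof the paper leaves to the reader.
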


To end this section I want to pose the following

\bigskip\noindent  
{\bf Problem:} {\it Formulate and possibly prove geometric 
Paley-Wiener theorems, i.e. (PW2) and inverse of (PW1), 
for $G$.}

\subsection{Hard estimates on extended Maa\ss{} cusp forms}
Let $\Gamma<G$ be a lattice. Then, an analytic function 
$\phi: X\to \C$ is called a Maa\ss{} automorphic form if  
\begin{itemize}
\item $\phi$ is $\Gamma$-invariant,  
\item $\phi$ is a $\Delta$-eigenfunction,  
\item $\phi$ is of moderate growth at the cusps 
of $\Gamma\bs X$. 
\end{itemize} 
We note that the third bulleted item is automatic if 
$\Gamma$ is co-compact, i.e. $\Gamma\bs X$ is compact.

\par A Maa\ss{} form $\phi$ is called a cusp form if 
it vanishes at all cusps of $\Gamma\bs X$, i.e.

$$\int_{N'\cap \Gamma\bs N'} \phi(n'x) dn' = 0 \qquad (x\in X)$$
for all unipotent groups $N'<G$ with $\Gamma\cap N'\neq 
\emptyset$. 

\par From now on we assume that $\phi$ is a cusp form. 
Frobenius reciprocity (see \cite{GGPS} and \cite{BR2} for a
quantitative version) tells us that 

$$\phi(gK)=(\pi(g)v_K, \eta) \qquad (g\in G)$$
for $(\pi,\H)$ a unitary irreducible representation of $G$, 
$v_K\in \H^K$ a normalized $K$-fixed vector and $\eta\in 
(\H^{-\infty})^\Gamma$ a $\Gamma$-invariant distribution 
vector ( in \cite{BR2}it is, perhaps  more appropriately, called 
{\it automorphic functional}).  
It is useful to allow arbitrary smooth vectors $v\in \H^\infty$
and build $\Gamma$-invariant smooth functions $\phi_v$ on $G$ by 

$$\phi_v(g)= (\pi(g)v_K, \eta) \qquad (g\in G)\, .$$
Langland's modification of the Sobolev Lemma for cusp
forms then reads as: 

\begin{equation} \label{so1}
\|\phi_v\|_\infty=\sup_{g\in G} |\phi_v(g)|\leq C \cdot S_2(v)
\qquad (v\in \H^\infty) \end{equation}
for $C>0$ a constant only depending on the 
geometry of $\Gamma\bs G$ (see \cite{BR}, Appendix B for an 
exposition). 
As $\|\cdot\|_\infty$ is $G$-invariant, we deduce from 
\ref{so1}
that 

\begin{equation} \label{so2}
\|\phi_v\|_\infty=\sup_{g\in G} |\phi_v(g)|\leq C \cdot S_2^G(v)
\qquad (v\in \H^\infty) \end{equation}
(cf. \cite{BR}, Section 3). 
One deep observations in \cite{BR} was that 
$S_2^G(v)$ can be considerably smaller as $S_2(v)$, for instance 
if $v=\pi(a_\e)v_K$. 
We combine \label{so2} with Theorem \ref{t=so} and Proposition \ref{p=est} 
(cf. \cite{KSI}, Th. 6.7)

\begin{thm}\label{t=br} Let $\phi$ be a Maa\ss{} cusp form. Then there 
exist constants $C, C'>0$ such that 
$$\sup_{g\in G} |\phi(ga_\e\cdot x_0)| \leq C 
\|\phi(\cdot a_\e)\|_{L^2(\Gamma\bs G)}\leq  C' 
\cdot \sqrt{|\log \e|}$$ 
\end{thm}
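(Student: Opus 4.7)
The plan is to realize $\phi(ga_\e\cdot x_0)$ as a matrix coefficient of a unitary representation evaluated against an automorphic functional, and then to chain the invariant Sobolev estimate (\ref{so2}), Theorem \ref{t=so}, and Proposition \ref{p=est}. By Frobenius reciprocity I write $\phi(gK)=(\pi(g)v_K,\eta)$ with $v_K\in\H^K$ normalized and $\eta\in(\H^{-\infty})^\Gamma$ an automorphic functional, cuspidality placing this embedding in the cuspidal spectrum of $L^2(\Gamma\bs G)$. Since $a_\e=\exp(i(\pi/4-\e){\bf h})$ lies in $\exp(i\Omega)\cdot x_0\subset\Xi$, the orbit map of $v_K$ extends holomorphically through $a_\e$ and the holomorphic extension of $\phi$ (granted by the previous theorem on extending $\Delta$-eigenfunctions) simply reads
\[ \phi(ga_\e\cdot x_0)=(\pi(g)\pi(a_\e)v_K,\eta)=\phi_{\pi(a_\e)v_K}(g),\qquad \pi(a_\e)v_K\in\H^\infty. \]

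I then apply (\ref{so2}) to $v=\pi(a_\e)v_K$, then Theorem \ref{t=so}, and finally the fact that $v\mapsto\phi_v$ embeds the irreducible $(\pi,\H)$ isometrically, up to a scalar $c_\eta$ attached to the automorphic functional, into the cuspidal part of $L^2(\Gamma\bs G)$:
\[ \sup_{g\in G}|\phi(ga_\e\cdot x_0)|\le C\cdot S_2^G(\pi(a_\e)v_K)\le C'\,\|\pi(a_\e)v_K\|=C''\,\|\phi(\cdot\, a_\e)\|_{L^2(\Gamma\bs G)}. \]
This delivers the left inequality of the theorem. For the right inequality, Proposition \ref{p=est} applied to the normalized $K$-fixed vector gives $\|\pi(a_\e)v_K\|\asymp\sqrt{|\log\e|}$ as $\e\to 0^+$, and feeding this into the chain above yields $\sup_g|\phi(ga_\e\cdot x_0)|\le C'''\sqrt{|\log\e|}$.

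The principal obstacle in this chain is not the bookkeeping but the invocation of Theorem \ref{t=so}, which is the hardest single input of the article: naively the Sobolev norm $S_2(\pi(a_\e)v_K)$ blows up like $\e^{-2}$ because of the logarithmic singularities that $\pi(a_\e)v_K$ develops at the two $H$-fixed points $x=\pm1$, yet the $G$-invariant variant $S_2^G$ remains comparable to the Hilbert norm precisely because conjugation by the rotation $k_0\in K$ exchanges $\af$ with $\h$ and thereby converts the offending $A$-derivatives into tangential $H$-derivatives that do not feel the singularities, cf. the discussion surrounding (\ref{e=ah}). Everything else — Frobenius reciprocity, the isometry of the cuspidal embedding, and the elementary one-dimensional integral producing the $\sqrt{|\log\e|}$ bound — is standard once Theorem \ref{t=so} is in hand.
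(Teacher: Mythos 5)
Your proof is correct and follows exactly the paper's intended argument: chain Langlands' invariant Sobolev bound (\ref{so2}) applied to $v=\pi(a_\e)v_K$, then Theorem \ref{t=so} to dominate $S_2^G$ by the Hilbert norm, then the cuspidal isometry and Proposition \ref{p=est} to produce $\sqrt{|\log\e|}$. The paper states this combination without spelling it out; you have merely made explicit the Frobenius-reciprocity identification $\phi(ga_\e\cdot x_0)=\phi_{\pi(a_\e)v_K}(g)$ and the isometric embedding into the cuspidal part of $L^2(\Gamma\bs G)$, which is the same route.
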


\begin{rem} In \cite{BR} a slightly weaker 
bound was established, namely: 
$$\sup_{g\in G} |\phi(ga_\e\cdot x_0)| \leq C \cdot |\log \e|, $$
see \cite{BR} Sect.1, Proposition part (3). 
\end{rem}

\section{Automorphic cusp forms}

In this section we explain how one can use the 
unipotent model for the crown domain in the 
theory of automorphic functions on the 
upper half plane. 

\par To avoid extra notation we will stick to  
$$\Gamma=\Sl(2, \Z)$$
for our choice of lattice. 

\par In the sequel we let $\phi$ be a Maa\ss{} cusp form. Let us fix $y>0$ and consider
the $1$-periodic function

$$F_y: \R \to \C, \ \ u\mapsto \phi(n_ua_y(i))=\phi(u+iy)\, .$$
This function being smooth and periodic admits a Fourier expansion

$$F_y(u)=\sum_{n\neq 0} A_n(y) e^{2\pi i n x}\, .$$
Here,  $A_n(y)$ are complex numbers depending on $y$.
Now observe that
$$n_ua_y=a_y a_y^{-1}n_u a_y= a_y n_{u/y}$$
and so
$$F_y(u)=\phi(a_y n_{u/ y}.x_0)\, .$$
As $\phi$ is a $\D(X)$-eigenfunction, it admits a
holomorphic continuation to $\Xi=X\times \oline X$. 
So we employ the crown model and conclude that 
$F_y$ admits a holomorphic continuation to the
strip domain
$$S_y=\{ w=u+iv \in \C\mid |v|< y\}\, .$$
Let now $\e>0$, $\e$ small. Then, for $n>0$, we
proceed with Cauchy

\begin{align*} A_n(y) & =\int_0^1 F_y( u -i(1-\e)y) e^{-2\pi i n (u-i(1-\e)y)}\ du \\
&= e^{-2\pi n (1-\e)y} \int_0^1 F_y( u-i(1-\e)y) e^{-2\pi i n u} \ du \\
&= e^{-2\pi n (1-\e)y} \int_0^1 \phi ( a_y n_{u/y} n _{-i(1-\e)}.x_0) e^{-2\pi i n u} \ du \, .
\end{align*}

Thus we get, for all $\e>0$ and $n\neq 0$ the inequality

\begin{equation}\label{eq=ineq}
|A_n(y)|\leq e^{-2\pi |n| y(1-\e)} \sup_{\Gamma g\in \Gamma\backslash G} |\phi(\Gamma g n_{\pm i(1-\e)}.x_0)|
\end{equation}

We need an estimate.

\begin{lem} \label{lem=esti}Let $\phi$ be a Maa\ss{} cusp form. Then there exists a
constant $C$ only depending on $\lambda$ such that
for all $0< \e < 1$
$$  \sup_{\Gamma g\in \Gamma\backslash G} |\phi(\Gamma g n_{i(1-\e)}.x_0)|\leq  C |\log \e|^{1\over 2}$$
\end{lem}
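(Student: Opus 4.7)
The plan is to reduce the unipotent estimate directly to the elliptic estimate in Theorem \ref{t=br} by using the matching of orbits provided by Lemma \ref{l=match}. Since the Maa\ss{} cusp form $\phi$ is $\Gamma$-invariant, we have
$$\sup_{\Gamma g\in\Gamma\bs G}|\phi(\Gamma g n_{i(1-\e)}\cdot x_0)|=\sup_{g\in G}|\phi(g n_{i(1-\e)}\cdot x_0)|,$$
so the statement concerns the sup-norm of $\phi$ on a single $G$-orbit through the unipotent point $n_{i(1-\e)}\cdot x_0$.

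First I would convert the unipotent parameter to an elliptic one. Lemma \ref{l=match} gives $Gn_{it}\cdot x_0 = G a_{e^{i\psi}}\cdot x_0$ whenever $t=\sin 2\psi$ with $\psi\in(-\pi/4,\pi/4)$. Setting $t=1-\e$ and writing $\psi=\pi/4-\e'$, the relation $\sin 2\psi=\cos 2\e'=1-\e$ yields
$$\e=2\sin^2\e',\qquad\text{so that}\qquad \e'\asymp \sqrt{\e}\quad\text{as }\e\to 0.$$
In particular, with the notation $a_{\e'}=\begin{pmatrix} e^{i(\pi/4-\e')} & 0\\ 0 & e^{-i(\pi/4-\e')}\end{pmatrix}$ from the previous section, we get the $G$-orbit equality
$$Gn_{i(1-\e)}\cdot x_0=G a_{\e'}\cdot x_0$$
for $\e$ small enough that $\e'\in(0,\pi/4)$.

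Next I would apply Theorem \ref{t=br}. That theorem asserts
$$\sup_{g\in G}|\phi(g a_{\e'}\cdot x_0)|\leq C'\sqrt{|\log\e'|},$$
with $C'$ depending only on $\lambda$ (through the representation $\pi_\lambda$ associated to the cusp form). Combining with the orbit identification above,
$$\sup_{g\in G}|\phi(g n_{i(1-\e)}\cdot x_0)|=\sup_{g\in G}|\phi(g a_{\e'}\cdot x_0)|\leq C'\sqrt{|\log\e'|}.$$
Finally, since $\e'\asymp\sqrt{\e}$, we have $|\log\e'|=\tfrac{1}{2}|\log\e|+O(1)$, so $\sqrt{|\log\e'|}\leq C''\sqrt{|\log\e|}$ for some absolute constant and all sufficiently small $\e$; absorbing constants gives the claimed bound.

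There is essentially no obstacle beyond the bookkeeping: all the hard work — the invariant Sobolev norm estimate (Theorem \ref{t=so}), the Hilbert norm growth (Proposition \ref{p=est}), and Langlands' refinement of the Sobolev embedding for cusp forms — is already packaged into Theorem \ref{t=br}. The one point requiring care is that the matching lemma requires $\psi\in(-\pi/4,\pi/4)$, equivalently $\e'>0$; this is fine for $\e\in(0,1)$ since then $\e<1$ gives $\cos 2\e'=1-\e>0$. The constant $C$ depends on $\phi$ only through the eigenvalue $\lambda$, exactly as in Theorem \ref{t=br}.
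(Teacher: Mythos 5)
Your proposal is correct and follows the same route as the paper: convert the unipotent parameter to an elliptic one via the orbit-matching Lemma \ref{l=match}, track the change of parameter $\e'\asymp\sqrt{\e}$, and apply Theorem \ref{t=br}. In fact your derivation of the relation $\e=2\sin^2\e'$ is slightly more careful than the paper's stated asymptotic $t_\e\approx\pi/4-\sqrt{2\e}$ (which carries a harmless constant error), and the conclusion $|\log\e'|\asymp|\log\e|$ is exactly what the paper uses.
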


\begin{proof} Let $-\pi/4< t_\e <\pi/4$ be such that $\pm (1-\e)=\sin 2t_\e$.
Then, by Lemma \ref{l=match}  we have
$G n_{\pm i(1-\e)}.x_0 =G a_{\e}.x_0$ with
$a_\e=\begin{pmatrix} e^{it_\e} & 0 \\ 0& e^{-it_\e}\end{pmatrix}$.
Now note that $t_\e \approx \pi/4 -\sqrt{2\e}$ and thus
Prop. \ref{p=est}  and Theorem \ref{t=br} , give that
$$\sup_{\Gamma g\in \Gamma\backslash G} |\phi(ga_\e.x_0)|\leq C |\log \e|^{1\over 2}\, .$$
This concludes the proof of the lemma.\end{proof}

We use the estimates in Lemma \ref{lem=esti} in (\ref{eq=ineq}) and
get

\begin{equation}\label{eq=ineq1}
|A_n(y)|\leq C e^{-2\pi |n| y(1-\e)} |\log \e|^{1\over 2}\, ,
\end{equation}
and specializing to $\e=1/y$ gives that
\begin{equation}\label{eq=ineq2}
|A_n(y)|\leq C e^{-2\pi |n| (y-1)} (\log y)^{1\over 2} \, .
\end{equation}
This in turn yields for $y>2$ that

\begin{align*} |\phi(iy)| & = |F_y(0)|\leq \sum_{n\neq 0} |A_n(y)|\\
&\leq C (\log y)^{1\over 2}\sum_{n\neq 0} e^{-2\pi |n| (y-1)}\\
&\leq C (\log y)^{1\over 2} \cdot e^{-2\pi y}\end{align*}
It is clear, that we can replace $F_y$ by $F_y(\cdot +x)$ for any $x\in \R$
without altering the estimate. Thus
we have proved:

\begin{thm} Let $\phi$ be a Maa\ss{} cusp form. Then there
exists a constant $C>0$, only depending on $\lambda$,  such that
$$|\phi(x+iy)|\leq C (\log y)^{1\over 2} \cdot e^{-2\pi y} \qquad (y>2)\, .$$
\end{thm}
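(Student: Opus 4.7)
The plan is to exploit the unipotent parameterization of the crown by writing $\phi$ as a Fourier series in the horocycle variable and shifting the contour of integration into the imaginary direction as far as the crown allows, which is exactly the strip of width $y$.

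First I would fix $y>0$ and expand the smooth $1$-periodic function $F_y(u)=\phi(u+iy)$ into its Fourier series $F_y(u)=\sum_{n\neq 0} A_n(y)e^{2\pi i n u}$, where the constant term vanishes because $\phi$ is a cusp form. Then I would observe, by rewriting $n_u a_y = a_y n_{u/y}$, that $F_y(u)=\phi(a_y n_{u/y}\cdot x_0)$, and by Proposition \ref{p=u} combined with the holomorphic extension of $\Delta$-eigenfunctions from the previous subsection, that $u\mapsto \phi(a_y n_{(u+iv)/y}\cdot x_0)$ extends holomorphically in $v$ so long as $n_{i(v/y)}\cdot x_0 \in \Xi$, i.e.\ for $|v|<y$. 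This means $F_y$ extends holomorphically to the strip $S_y=\{|\Im w|<y\}$.

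For $n>0$, shifting the contour to $\Im w = -(1-\e)y$ produces the factor $e^{-2\pi n (1-\e)y}$, and the resulting integral is controlled by the supremum of $\phi$ on the $G$-translates of $n_{-i(1-\e)}\cdot x_0$ (and similarly for $n<0$ via the opposite shift). Lemma \ref{lem=esti} furnishes exactly the needed bound $\sup_{\Gamma g} |\phi(\Gamma g n_{\pm i(1-\e)}\cdot x_0)|\leq C|\log \e|^{1/2}$; this is the content where the hard analytic input (Theorem \ref{t=br}) enters, and is what makes the whole approach work. Combining yields the Fourier coefficient bound
\[ |A_n(y)|\leq C\, e^{-2\pi |n| y(1-\e)}\, |\log \e|^{1/2}. \]

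The last step is to optimize in $\e$. The natural choice is $\e=1/y$ (valid once $y>1$), giving $|A_n(y)|\leq C(\log y)^{1/2} e^{-2\pi|n|(y-1)}$. Summing the Fourier series at $u=0$ and bounding by a geometric series gives $|\phi(iy)|\leq C(\log y)^{1/2}\cdot e^{-2\pi y}$ for $y>2$, and since translating in $x$ merely replaces $F_y$ by $F_y(\cdot+x)$ without changing any of the estimates, the bound holds at every $x+iy$. The main obstacle is packaged into Lemma \ref{lem=esti}; given it, the remaining argument is the standard Fourier-coefficient/contour-shift game, with the characteristic gain coming from the fact that the strip of holomorphy has width exactly $y$.
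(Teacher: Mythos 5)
Your proposal is correct and reproduces the paper's own argument essentially step for step: the Fourier expansion in the horocycle variable, the identification of the strip of holomorphy $|v|<y$ via $n_ua_y=a_yn_{u/y}$ and the unipotent model of $\Xi$, the contour shift to $\Im w=-(1-\e)y$, the invocation of Lemma \ref{lem=esti}, the optimization $\e=1/y$, and the final translation in $x$. There is no substantive difference from the proof in the text.
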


\begin{rem} It should be mentioned that this estimate is not optimal:
one can drop the $\log$-term by employing our knowledge about 
the coefficient functions  $A_n(y)$. However the method presented 
above generalizes to all semi-simple Lie groups.
\end{rem}

\section{$G$-innvariant Hilbert spaces of holomorphic functions on $\Xi$}

Hilbert spaces of holomorphic functions are in particular 
reproducing kernel Hilbert spaces, cf. {\cite{NA}. 
\subsection{General theory}
In this subsection $G$ is a group and $M$ 
is a second countable complex manifold.  The compact-open topology 
turns $\O(M)$ into a Fr\'echet space. 

\par We assume that $G$ acts on $M$ 
in a biholomorphic manner. This action 
induces an action of  $G$ on $\O(M)$ via:  

$$G\times \O(M)\to \O(M), \ \ (g, f)\mapsto f(g^{-1}\cdot)\, .$$ 
We assume that the action is continuous. 
By a $G$-invariant Hilbert space of holomorphic functions
on $M$ we understand a Hilbert space $\H\subset \O(M)$ such that 
\begin{itemize}
\item The inclusion $\H\hookrightarrow \O(M)$ is continuous;   
\item $G$ leaves $\H$ invariant and the action is unitary. 
\end{itemize}
 
It follows that all point evaluations 
$$\K_m: \H\to \C, \ \ f\mapsto f(m);
\quad (m\in M)$$
are continuous, i.e. $f(m)=\langle f, \K_m\rangle$.  
We obtain a kernel function 
$$\K: M\times M\to \C, \ \ (m,n)\mapsto \langle \K_n, \K_m\rangle= \K_n(m)$$
which is holomorphic in the first and anti-holomorphic 
in the second variable.  The kernel $\K$ characterizes $\H$ completely. 
Moreover that $G$ acts unitarily just means that $\K$ is $G$-invariant: 

$$\K=\K(g\cdot, g\cdot)\qquad (g\in G)\, .$$ 

We denote by $\cC=\cC(M,G)$ the cone of all $G$-invariant holomorphic  
positive definite kernels (i.e reproducing kernels) on 
$M\times \oline M$. In the terminology of Thomas \cite{Tho}  
is a conuclear cone in the Fr\'echet space $\O(M\times \oline M)$ 
and as such admits a decomposition 

\begin{equation} \label{r1} 
\K=\int_{\Ext(\cC)} \K^\lambda \ d\mu(\lambda)\,, \end{equation}
 see \cite{k1}, Th. II.12  for a more general statement. 
In (\ref{r1}) the symbol  $\Ext(\cC)$ denotes the  equivalence classes 
(under $\R^+$-scaling) of extremal rays in $\cC$ and 
$$\lambda\mapsto \K^\lambda$$
is an appropriate assignment of representatives; 
furthermore $\mu$ is a Borel measure on $\Ext(\cC)$.

\subsection{Invariant Hilbert spaces on the crown}
We return to $G=\Sl(2,\R)$ and $M=\Xi$. We write 
$\hat G_{\rm sph}$ for the $K$-spherical part of 
$\hat G$ and note that the map 
$\lambda \mapsto [\pi_\lambda]$
is a bijection from   $\left(\R\cup (-i,i)\right)/\W$
to $\hat G_{\rm sph}$. 
Morover for $[\pi]\in\hat G_{\rm sph}$ we define a positive definite
holomorphic $G$-imvariant kernel $\K^\pi$ on $\Xi$ via

$$\K^\pi(z,w)=\langle \pi(z)v, \pi(w)v\rangle \qquad (z,w\in\Xi)$$
where $v$ is a unit $K$-fixed vector. 
Then each kernel $\K$ of a $G$-invariant Hilbert space $\H\subset 
\O(\Xi)$ can be written as 

\begin{equation}\label{e=1} 
\K(z,w)=\int_{\hat G_{\rm sph}} \K^\lambda(z,w) \ d\mu(\lambda)
\qquad(z,w\in\Xi)\end{equation}
where we simplified notation $\K^{\pi_\lambda}$ to $\K^\lambda$. 
The Borel measure $\mu$ satisfies the condition 

\begin{equation}\label{e=2}(\forall 0<c<2)\qquad 
\int_{\hat G_{\rm sph}} e^{c\left|\Re \lambda\right|}\ d\mu(\lambda)<\infty\, 
\end{equation}
and conversely, a measure $\mu$ which satisfies (\ref{e=2}) 
gives rise to a $G$-invariant Hilbert space of holomorphic functions 
on $\Xi$, see \cite{KSII}, Prop. 5.4. 

\subsection{Hardy spaces for the most continuous spectrum of the hyperboloid}
A little bit of motivation upfront. 
We recall the splitting of square integrable functions on $\R$  
$$L^2(\R)\simeq H^2(X)\oplus H^2(\oline X)$$
into a sum of Hardy spaces:
$$H^2(X)=\left \{ f\in\O(X)\mid \sup_{y>0} \int_\R |f(x+iy)|^2 \ dx <\infty\right\}\, ,$$
and 
$$H^2(\oline X)=\left \{ f\in\O(\oline X)\mid \sup_{y<0} 
\int_\R |f(x+iy)|^2 \ dx <\infty\right\}\, .$$

The isomorphism map from $H^2(X)$ to $L^2(\R)$ is just 
the boundary value: 
$$b:H^2(X)\to L^2(\R); \ b(f)(x)=\lim_{y\to 0^+} f(\cdot +iy)$$
and likewise 
$$\oline b:H^2(X)\to L^2(\R); \ \oline b(f)(x)=\lim_{y\to 0^-} f(\cdot +iy)$$

In the sequel we replace the pair $(\R, X)$, i.e. Shilov boundary $\R$ 
of the complex manifold $X$, by $(Y, \Xi)$ where $Y=G/H=\Sl(2,\R)/\SO(1,1)$
is the distinguished boundary of $\Xi$.  
But now we have to be more careful with the space of square-integrable functions
$L^2(Y)$. Recall the Casimir element, the generator of ${\mathcal Z}(\gf):=\U(\gf)^G$: 
$${\bf C}= {\bf h}^2 + 4 {\bf e}{\bf f}\, .$$
Then 
$$L^2(Y)=L^2(Y)_{\rm mc}\oplus L^2(Y)_{\rm disc}$$ 
accordingly whether ${\bf C}$ has continuous or discrete spectrum.
Here our concern is only with the (most) continuous part
$L^2(Y)_{\rm mc}$. So it is about to define 
the Hardy spaces $H^2(\Xi)$ and $H^2(\oline \Xi)$. 
It was a result of \cite{GKO} that $H^2(\Xi)$ actually exists and that 
the kernel is given (up to positive scale) by 

\begin{equation} \label {h=1} \K = \int_0^\infty \K^\lambda 
\ { \lambda \tanh (\pi \lambda/2)  \over
\cosh (\pi\lambda)}\cdot   d\lambda
\, .\end{equation}
There exists a well defined boundary value map 
$$b: H^2(\Xi)\to L_{\rm mc}^2(Y);\  b(f)(g(1,-1))=\lim_{e\to 0^+} f(ga_\e\cdot 
x_0)$$
which is equivariant and isometric. 
Likewise one has a Hardy space $H^2(\oline \Xi)$ on $\oline \Xi$ which is 
just the complex conjugate of $H^2(\Xi)$. The decomposition of the 
continuous spectrum then is \cite{GKO}: 

\begin{equation} L_{\rm mc}^2(Y)=b(H^2(\Xi))\oplus \oline{b} (H^2(\oline \Xi))
\end{equation}
 
\begin{rem} (a) We caution the reader that $L_{\rm mc}^2(Y)$ is not exhausted 
by  our Hardy spaces once the real rank of $Y$ is larger then one.
\par\noindent  (b) We defined $H^2(\Xi)$ by its kernel and not by its norm.
It is possible to give a geometric expression of the norm in $H^2(\Xi)$
in terms of certain $G$-orbital integrals on $\Xi$, see \cite{GKO2}. 
This method was also quite useful  in our work on the 
heat kernel transform \cite{KOS}. 
\end{rem} 
 
\section{K\"ahler structures on $\hat G_{\rm sph}$}

Throughout this section $(\pi,\H)$ denotes a non-trivial 
$K$-spherical unitary representation of $G$. 
We let $v_K\in \H$ be a $K$-fixed unit vector.

\par We first recall that the projective  space 

$$\P(\H) =\H^\times / \C^*$$
of $\H$ is an infinite dimensional complex manifold
which is complete under the Fubini-Study metric $g_{FS}$. 
We write 

$$h_{FS}=g_{FS} + i \omega_{FS}$$
for the corresponding Hermitian structure on $\P(\H)$. 

Without proof we  state two results, see \cite{KSII}, Prop. 3.1
and Th. 3.3, which hold in full generality. 

\begin{thm} The map 
$$F_\pi: \Xi\to \P(\H), \ \ z\mapsto [\pi(z)v_K]$$
is proper.  In particular ${\operatorname im} F_\pi$  is closed and the pull back 
$h_\pi:=F_\pi^*h_{FS}$ defines a Hermitian K\"ahler  structure on $\Xi$
whose underlying Riemannian structure $g_\pi$ is complete. 
\end{thm}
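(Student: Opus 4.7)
The plan is to prove properness first (the crux of the theorem), from which closedness of the image and completeness of $g_\pi$ will follow routinely, while the K\"ahler statement will reduce to checking that $F_\pi$ is a holomorphic immersion.

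First, $F_\pi$ is well-defined. By Theorem \ref{t=m} the orbit map $f_{v_K}\colon G\to\H$ extends holomorphically to $\Xi K_\C$, and since $\H^K=\C v_K$ is one-dimensional the line $\C v_K$ carries the unique $K_\C$-character extending the trivial $K$-character, namely the trivial one; so $v_K$ is $K_\C$-fixed and $f_{v_K}$ descends to a holomorphic lift $\tilde F_\pi\colon\Xi\to\H$. By the identity-theorem argument of (\ref{eq=nz}), $\tilde F_\pi$ is nowhere vanishing on the connected domain $\Xi K_\C$, so $F_\pi(z):=[\tilde F_\pi(z)]$ is a well-defined $G$-equivariant holomorphic map into $\P(\H)$.

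For properness I would parameterize $\Xi\simeq G\times_K\hat\Omega$ via Proposition \ref{p=ag}, write $z_n=g_n\exp(iY_n)\cdot x_0$ and $w_n:=\pi(\exp(iY_n))v_K$, and suppose $F_\pi(z_n)\to[u]$ in $\P(\H)$. If $Y_n$ stays in a compact subset of $\hat\Omega$, then after a subsequence $Y_n\to Y$ and $w_n\to w:=\pi(\exp(iY))v_K\neq 0$ strongly in $\H$; the bounds $\|w_n\|\to\|w\|>0$ together with projective convergence force, after extracting a scalar subsequence, strong convergence $\pi(g_n)w_n\to u'$ with $u'\neq 0$ in $\H$, and Lemma \ref{lem=hm} applied to the compact set $\{w_n,\pi(g_n)w_n\}\cup\{\hbox{limits}\}\subset\H\setminus\{0\}$ yields boundedness, hence convergence, of $g_n$ in $G$, giving a convergent subsequence of $z_n$. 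The case $Y_n\to\partial\hat\Omega$ is the main obstacle: here Proposition \ref{p=est} (and its extension to higher $K$-types) gives $\|w_n\|\to\infty$, and the essential input is that the normalized vectors $\hat w_n:=w_n/\|w_n\|$ admit no strong limit in $\H$, even up to unimodular phase, because their natural limit lives in $\H^{-\omega}\setminus\H$ as a hyperfunction $H$-spherical boundary value of the kind constructed in the section on holomorphic $H$-spherical vectors. Granting this, a projective limit $[\pi(g_n)\hat w_n]\to[u]$ with representative $u\in\H$ would, after absorbing phases, produce a strong limit of $\pi(g_n)\hat w_n$ in $\H$; feeding the set $\overline{\{\hat w_n\}}\cup\overline{\{\pi(g_n)\hat w_n\}}\cup\{\hbox{limits}\}$ into Lemma \ref{lem=hm} and using the non-precompactness of $\{\hat w_n\}$ in $\H$ then yields the desired contradiction. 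Making the hyperfunction boundary behaviour precise and ruling out compensating phases is the technical heart of the argument.

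With properness established the remaining items are short. The image $F_\pi(\Xi)$ is closed as the image of a proper map into a Hausdorff space. For the K\"ahler property it suffices to check that $F_\pi$ is a holomorphic immersion; at $x_0$ the differential, restricted to $\pf_\C\subset T_{x_0}X_\C$, is $Z\mapsto d\pi(Z)v_K\bmod\C v_K$, which is injective because in the $K$-weight decomposition of $\H_K$ the operators $d\pi(Z)$ with $Z\in\pf_\C$ send $v_K$ (weight $0$) into the nonzero $K$-weight subspaces, linearly independent of $v_K$; $G$-equivariance propagates this along $X$, and an explicit computation in the parameterization of Proposition \ref{p=ag} extends it to all of $\Xi$. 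Completeness of $g_\pi$ is then formal: any $g_\pi$-Cauchy sequence $(z_n)$ has $F_\pi$-image Cauchy in the complete space $(\P(\H),g_{FS})$, hence convergent in the closed subset $F_\pi(\Xi)$; properness of $F_\pi$ supplies a convergent subsequence of $z_n$, which combined with the Cauchy property gives convergence of $z_n$ itself.
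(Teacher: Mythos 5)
The paper states this theorem without proof, citing \cite{KSII}, Prop.\ 3.1 and Th.\ 3.3, so there is no in-paper argument to compare against; you are reconstructing a proof from scratch, and the overall plan (parameterize $\Xi$ via Proposition \ref{p=ag}, split by the behaviour of $Y_n$, appeal to Lemma \ref{lem=hm}, then deduce closedness of the image and completeness of $g_\pi$ from properness plus the immersion property) is sensible. Your ``Case 1'' ($Y_n$ relatively compact in $\hat\Omega$) is sound once phases are handled: after passing to a subsequence, $\zeta_n\pi(g_n)w_n$ converges strongly to a nonzero vector, $\zeta_n w_n$ converges, and the closure of these two sequences is a compact subset of $\H\setminus\{0\}$ to which Lemma \ref{lem=hm} applies. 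The immersion discussion at $x_0$ and the standard ``proper isometric immersion into a complete manifold pulls back to a complete metric'' argument are also essentially correct, though the claim that strict plurisubharmonicity of $z\mapsto\log\|\pi(z)v_K\|^2$ (the paper's remark) is equivalent to the immersion property would be a cleaner route to the K\"ahler non-degeneracy.

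The genuine gap is in ``Case 2'' ($Y_n\to\partial\hat\Omega$, so $\|w_n\|\to\infty$), which you yourself flag as ``the technical heart.'' Your proposed step is to feed the set $\overline{\{\hat w_n\}}\cup\overline{\{\pi(g_n)\hat w_n\}}\cup\{\mbox{limits}\}$ into Lemma \ref{lem=hm}. But Lemma \ref{lem=hm} requires a \emph{compact} subset of $\H\setminus\{0\}$, and the whole point of your preceding observation is that $\overline{\{\hat w_n\}}$ is \emph{not} compact: $\hat w_n\rightharpoonup 0$ weakly (since $w_n$ converges in $\H^{-\omega}$ to a hyperfunction while $\|w_n\|\to\infty$) while $\|\hat w_n\|=1$, so $\hat w_n$ has no strongly convergent subsequence. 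Non-precompactness therefore \emph{blocks} the application of the lemma; it does not produce the contradiction. Nor does a direct weak-compactness argument close the gap: if $g_n\to\infty$ and $\zeta_n\pi(g_n)\hat w_n\to\hat u$ strongly, then $\zeta_n^{-1}\hat w_n-\pi(g_n^{-1})\hat u\to 0$ strongly, and by the Riemann--Lebesgue lemma $\pi(g_n^{-1})\hat u\rightharpoonup 0$ as well, which is perfectly consistent with $\hat w_n\rightharpoonup 0$. So the two weak-null sequences could a priori track each other strongly; ruling this out requires quantitative input beyond ``the natural limit of $\hat w_n$ is a hyperfunction'' --- for instance an estimate that separates $\pi(g_n)^{-1}\hat u$ from $\hat w_n$ in $\H$, exploiting the specific spectral or $K$-type structure of $w_n=\pi(\exp(iY_n))v_K$. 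As written, the properness proof is therefore incomplete at precisely the step that makes the theorem nontrivial.
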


\begin{rem} Elementary complex analysis shows that the map 
$$s_\pi: \Xi\to \R_{>0}, \ \ z\mapsto \|\pi(z)v\|^2$$
is strictly plurisubharmonic. The K\"ahler form 
$\omega_\pi$ from the 
previous theorem is then nothing else as 
$$\omega_\pi(z)={i\over 2}\partial\oline \partial 
\log \|\pi(\cdot )v_K\|^2\, .$$
\end{rem}

The main result of this section then is, see \cite{KSII}, : 

\begin{thm} The map $\pi\mapsto \omega_\pi$ identifies 
$\hat G_{\rm sph}\setminus\{ {\bf 1}\}$ with positive K\"ahler forms 
on $\Xi$ whose associated Riemannian metric is complete. 
\end{thm}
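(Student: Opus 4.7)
\medskip\noindent The previous theorem already supplies one half of the statement: for each $\pi \in \hat G_{\mathrm{sph}} \setminus \{\mathbf 1\}$ the form $\omega_\pi$ is a positive K\"ahler form whose associated Riemannian metric is complete. The remaining content is bijectivity of $\pi \mapsto \omega_\pi$, with $G$-invariance on the target side being tacit (it is built into $\omega_\pi$ through the unitarity of $\pi|_G$). I split the work into injectivity and surjectivity.

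\medskip\noindent For injectivity, the plan is to recover the scalar function $s_\pi(z) := \|\pi(z)v_K\|^2$ directly from $\omega_\pi$. By the remark preceding the statement, $\omega_\pi = \tfrac{i}{2}\partial\bar\partial \log s_\pi$, and $s_\pi$ is $G$-invariant (since $\pi|_G$ is unitary and $v_K$ is $K$-fixed) with $s_\pi(x_0)=1$. Assuming $\omega_\pi = \omega_{\pi'}$, the difference $h := \log s_\pi - \log s_{\pi'}$ is $G$-invariant and pluriharmonic on the Stein, simply connected manifold $\Xi$, hence $h = \Re H$ for some $H \in \O(\Xi)$. Then for each $g \in G$ the function $H \circ g - H$ is a purely imaginary constant $i\, c(g)$, and $c : G \to \R$ is a continuous homomorphism that must vanish because $G = \Sl(2,\R)$ is perfect; so $H$ itself is $G$-invariant. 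The last step is a rigidity argument: any $G$-invariant holomorphic function on $\Xi$ restricts to a continuous $G$-invariant function on the transitive $G$-space $X = G/K$ and is therefore constant on the totally real submanifold $X \subset \Xi$, whence by the identity principle along a totally real submanifold of maximal dimension it is constant on $\Xi$. The normalization $s_\pi(x_0) = s_{\pi'}(x_0) = 1$ then forces $s_\pi \equiv s_{\pi'}$, and through $\phi_\lambda(\exp(2iY)) = s_{\pi_\lambda}(\exp(iY)\cdot x_0)$ this recovers the spherical function and hence the class of $\pi$.

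\medskip\noindent For surjectivity, given a positive $G$-invariant K\"ahler form $\omega$ on $\Xi$ with complete associated metric, I would first extract a global K\"ahler potential $\phi$ (using that $\Xi$ is Stein) and, running the cocycle calculation from the injectivity step in reverse, arrange $\phi$ to be $G$-invariant and normalized by $\phi(x_0)=0$. Setting $s := e^\phi$, let $\K(z,w)$ be the unique holomorphic/antiholomorphic polarization of $s$ on $\Xi\times\bar\Xi$; the plan is to verify that $\K$ is a positive-definite $G$-invariant reproducing kernel, so that the integral representation $\K = \int_{\hat G_{\mathrm{sph}}} \K^\lambda\, d\mu(\lambda)$ from Section~9 applies, and then to conclude that $\mu$ is a Dirac mass at a single $\lambda_0$, yielding $\omega = \omega_{\pi_{\lambda_0}}$.

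\medskip\noindent The hard part will be this last step. A priori, any positive combination of spherical kernels $\K^{\lambda}$ yields a positive-definite $\K$, but the passage to $\log \K(\cdot,\cdot)$ is nonlinear and, for a nontrivial superposition, produces a K\"ahler metric that typically degenerates along the loci in $\Xi$ where two summands trade dominance. Ruling out such mixtures will hinge on combining the sharp boundary asymptotics for $s_\lambda$ on the elliptic slice $\exp(i\Omega)\cdot x_0$ (Proposition~\ref{p=est} and its refinements) with the completeness hypothesis on the Riemannian metric underlying $\omega$: only a single spherical parameter can match the correct rate of blow-up of $s$ at every boundary direction simultaneously, and this rigidity is what collapses $\mu$ to a point mass.
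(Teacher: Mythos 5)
The paper gives no proof of this theorem; it cites \cite{KSII} and immediately poses ``The big problem then is to characterize the image of $\pi\mapsto \omega_\pi$.'' That follow-up sentence shows the theorem is \emph{not} asserting that the map is a bijection onto the class of positive complete K\"ahler forms, only that it is a well-defined injection into that class, so the bulk of what you set out to prove is your injectivity paragraph. That argument is correct and self-contained: the cocycle $g \mapsto H(g\cdot)-H$ takes values in $i\R$ and vanishes because $\Sl(2,\R)$ is perfect; the resulting $G$-invariant holomorphic $H$ is constant by restriction to the totally real $G$-orbit $X$ of maximal dimension; and the normalization $s_\pi(x_0)=1$ together with the doubling formula (\ref{s=pos}) recovers $\phi_\lambda$ and hence $\pi$. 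Since the paper defers to \cite{KSII} I cannot say whether this coincides with the authors' route, but as a standalone argument it stands.

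Your surjectivity paragraph is aimed at something the theorem does not claim, and it also contains a gap that more effort would not repair. Starting from a $G$-invariant potential $\phi$ and setting $s=e^\phi$, there is no reason for the holomorphic/antiholomorphic polarization $\K$ of $s$ on $\Xi\times\overline{\Xi}$ to be positive definite: strict plurisubharmonicity of $\log s$ does not make $s$ the diagonal of a reproducing kernel, so the integral representation (\ref{e=1}) from Section~9 is simply not available. Moreover, the hope that completeness of the underlying metric forces the representing measure $\mu$ to collapse to a Dirac mass is not supported by anything in the text; on the contrary, the open problem posed immediately after the theorem indicates that the image is expected to be a proper subcone of the complete positive $G$-invariant K\"ahler forms, i.e.\ that positivity and completeness do not by themselves characterize the image.
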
  

The big problem then is to characterize the image of $\pi\mapsto \omega_\pi$.

\section{Appendix: The hyperbolic model of the crown domain}

The upper half plane $X=G/K$ does not depend on the 
isogeny class of $G$. Replacing $G$ by its adjoint 
group $\operatorname{PSl}(2,\R)\simeq \rm{SO}_e(1,2)$ 
has essentially no consequences for the crown. 
Changing the perspective to $G=\rm{SO}_e(1,2)$ we obtain 
new view-points by realizing $\Xi$ in the complex quadric. 
This is the topic of this section.

\par Let us fix the notation first. From now on 
 $G=\rm{SO}_e(1,2)$ and we regard $K={\rm SO}(2,\R)$ as a maximal
compact subgroup of $G$ under the standard lower right
corner embedding.
\par Let us define a quadratic form $Q$ on $\C^3$ by

$$Q ({\bf z})=z_0^2-z_1^2 -z_2^2, \qquad
{\bf z}=(z_0, z_1, z_2)^T\in \C^3\, .$$
With $Q$ we declare real and complex hyperboloids
by
$$X=\{ {\bf x}=(x_0, x_1, x_2)^T\in \R^3\mid Q({\bf x})=1, x_0>0\}$$
and
$$X_\C=\{ {\bf z} =(z_0,z_1,z_2)^T\in \C^3\mid
Q({\bf z})=1\}\ .$$
We notice that  mapping
$$G_\C/K_\C\to X_\C, \ \ gK_\C\mapsto g.{\bf x}_0 \qquad ({\bf x}_0=(1,0,0))$$
is diffeomorphic and that $X$ is identified
with $G/K$.
\par
At this point it is useful to introduce coordinates
on $\gf=\so(1,2)$. We set

\begin{equation*}
{\bf e_1}=\begin{pmatrix} 0 & 0 & 1 \\ 0 & 0 & 0\\ 1 & 0 & 0\end{pmatrix},
\quad
{\bf e_2}=\begin{pmatrix} 0 & 1 & 0 \\ 1 & 0 & 0\\ 0 & 0 & 0\end{pmatrix},
\quad
{\bf e_3}=
\begin{pmatrix} 0 & 0 & 0 \\ 0 & 0 & 1\\ 0 & -1 & 0\end{pmatrix}\, .
\end{equation*}
We notice that $\kf=\R {\bf e_3}$, $\pf= \R{\bf e_1} \oplus  \R {\bf e_2}$
and make our choice of the flat piece $\af=\R{\bf e_1}$.
Then $\Omega=(-1, 1) {\bf e_1}$,
$\Xi=G\exp (i(-\pi/ 2,\pi/2){\bf e_1}).{\bf x_0}$
and we obtain Gindikin's favorite model of the
crown
$$\Xi=\{ {\bf z}={\bf x} +i{\bf y}\in X_\C\mid
x_0>0, Q({\bf x}) >0\}\, .$$
It follows that the boundary
of $\Xi$ is given by

\begin{equation} \label{be1}
\partial \Xi=\partial_s \Xi \amalg \partial_n \Xi\end{equation}
with semisimple part
\begin{equation} \label{be2}
\partial_s \Xi=\{ i{\bf y}\in i\R^3\mid
Q({\bf y})=  -1\}
\end{equation}
and nilpotent part
\begin{equation} \label{be3}
\partial_n \Xi=\{  {\bf z}={\bf x}+i{\bf y}
\in X_\C\mid x_0>0, Q{(\bf x})=0\}\, .
\end{equation}

Notice that
${\bf z_1}=\exp(i\pi/2 {\bf e_1}).{\bf x_0}=(0,0,i)^T$
and that the stabilizer of ${\bf z}_1$ in $G$ is
the symmetric subgroup $H={\rm SO}_e(1,1)$, sitting inside of $G$
as the upper left corner block.
Hence
\begin{equation}
\partial_s\Xi=\partial_d \Xi =G.{\bf z_1}\simeq G/H
\end{equation}

\par A first advantage of the hyperbolic model is a 
more explicit view on the boundary of $\Xi$: Proposition 
\ref{p=b} becomes more natural in these coordinates. 
We allow ourselves to go over this topic again. 

\par Write $\tau$ for the involution on $G$
with fixed point set $H$ and let
$\gf=\h \oplus \qf$ the corresponding $\tau$-eigenspace
decomposition. Clearly, $\h=\R {\bf e_2}$ and
$\qf=\af \oplus \kf=\R {\bf e_1} \oplus  \R {\bf e_3}$.
Notice that $\qf$ breaks as an $\h$-module into two pieces
$$\qf=\qf^+ \oplus  \qf^-$$
with
$$\qf^\pm=\{ Y\in \qf\mid [e_2, Y]=\pm Y\} =\R ({\bf e_1}\pm {\bf e_2})
\, .$$
Let us define the $H$-stable pair of half lines
$$C =\R_{\geq 0}({\bf e_1}\oplus  {\bf e_3})\cup  \R_{\geq 0}
({\bf e_1}- {\bf e_3})$$
in $\qf=\qf^+ \oplus \qf^-$.  We remark
that $C$ is the boundary of the $H$-invariant
open cone
$$W=\Ad(H)(\R_{> 0}{\bf e_1})= \R_{> 0}({\bf e_1}+ {\bf e_3})\oplus  \R_{> 0}
({\bf e_1}- {\bf e_3})\, .$$
Recall that the tangent bundle $T(G/H)$ naturally identifies
with $G\times_H \qf$ and let us mention that $\cC=G\times_H C$ is a $G$-invariant
subset thereof.  Proposition \ref{p=b} from before now reads 
as:  

\begin{prop}\label{p=b'} For $G={\rm SO}_e(1,2)$, the mapping
$$b: G\times_H C\to \partial \Xi, \ \ [g,Y]\mapsto g\exp(-iY).{\bf z_1}$$
is a $G$-equivariant homeomorphism.
\end{prop}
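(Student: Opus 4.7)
The plan is to transport Proposition \ref{p=b} via the central isogeny $\mathrm{PSl}(2,\R)\simeq\mathrm{SO}_e(1,2)$, under which $H$, the $H$-module $\qf$, and the invariant cone $C$ correspond to their $\Sl(2,\R)$-counterparts; this gives the result for free. Nevertheless I would prefer to sketch a direct verification, since the quadric coordinates make every step explicit and the argument no longer than the isogeny bookkeeping.

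The formal parts come first. Well-definedness of $b$ follows from $h.\mathbf{z}_1=\mathbf{z}_1$ for $h\in H$ together with $h\exp(-iY)h^{-1}=\exp(-i\Ad(h)Y)$, and $G$-equivariance and continuity are immediate. To check that $b(G\times_H C)\subset\partial\Xi$, set $N_\pm=\mathbf{e}_1\pm\mathbf{e}_3$. On the zero section $b([g,0])=g.\mathbf{z}_1\in\partial_s\Xi$, while on the open rays a short matrix computation gives $N_\pm^2\mathbf{z}_1=0$, whence
\[
\exp(-isN_\pm).\mathbf{z}_1=\mathbf{z}_1-isN_\pm\mathbf{z}_1=(s,\pm s,i)^T\qquad(s>0),
\]
which sits in $\partial_n\Xi$ by (\ref{be3}) since the real part $(s,\pm s,0)$ is $Q$-null with $x_0>0$.

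The heart of the matter is bijectivity. On the zero section, $b$ restricts to the obvious $G$-equivariant diffeomorphism $G/H\to\partial_s\Xi$. On the nilpotent stratum, $H$ acts on each open ray $\R_{>0}N_\pm$ freely with adjoint eigenvalue $\pm 1$, so each half-line bundle $G\times_H\R_{>0}N_\pm$ maps to the single $G$-orbit through $(1,\pm 1,i)^T$; injectivity reduces to the stabilizer identity $\operatorname{Stab}_G((s,\pm s,i)^T)=\exp(-isN_\pm)\,H\,\exp(isN_\pm)$, and surjectivity I would obtain by a $KAN$-sweep, using $K$ to bring the imaginary part into $\af$-normal form and then $AN$ to reach one of the two model points. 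I expect the main obstacle to be precisely this orbit analysis in the nilpotent stratum, especially separating the two $G$-orbits through $(1,1,i)^T$ and $(1,-1,i)^T$ cleanly. Once bijectivity is in place, properness of $b$ (and hence the homeomorphism property) follows routinely: a divergent sequence in $G\times_H C$ diverges either in the base $G/H$ or along the fiber, and in either case the image diverges on the quadric.
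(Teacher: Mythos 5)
Note first that the paper supplies no proof for Proposition~\ref{p=b'}: it is presented merely as the quadric-coordinate restatement of Proposition~\ref{p=b}, whose own proof reads ``direct computation; see \cite{KO}, Th.~3.1.'' Your sketch is therefore a substitute for a deferred reference, and a direct verification in the quadric is entirely in the spirit of what the paper intends. Your preliminary steps (well-definedness, equivariance, the explicit image $\exp(-isN_\pm).\mathbf z_1=(s,\pm s,i)^T$ with $N_\pm=\mathbf e_1\pm\mathbf e_3$, the check that this lies in $\partial_n\Xi$) are all correct.

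There is, however, a genuine misstep in the injectivity discussion. The displayed ``stabilizer identity'' $\operatorname{Stab}_G((s,\pm s,i)^T)=\exp(-isN_\pm)\,H\,\exp(isN_\pm)$ is not a statement about subgroups of $G$: the conjugating element $\exp(\pm isN_\pm)$ lies in $G_\C\setminus G$, so the right-hand side is not contained in $G$ (except for $s=0$). What is true, and what injectivity actually needs, is the much simpler fact that $\operatorname{Stab}_G((s,\pm s,i)^T)$ is \emph{trivial} for $s>0$: since the matrices of $G$ are real, a stabilizing $g$ must fix both the imaginary part $(0,0,1)^T$ (so $g\in H=\exp(\R\mathbf e_2)$) and the real part $(s,\pm s,0)^T$; but $\exp(u\mathbf e_2)$ scales $(1,\pm 1,0)^T$ by $e^{\pm u}$, forcing $u=0$. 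This observation also disposes of the ``main obstacle'' you flag at the end: if $g(1,1,i)^T=(1,-1,i)^T$ then comparing imaginary parts forces $g\in H$, and $H$ scales $(1,1,0)^T$ by positive reals only, so it can never reach $(1,-1,0)^T$; hence the two nilpotent orbits are automatically distinct. With the stabilizer claim corrected in this way, the structure of your argument (zero section onto $\partial_s\Xi$, each ray bundle bijecting onto a single nilpotent orbit, a sweep argument for surjectivity, properness for the homeomorphism property) is sound, though the surjectivity and properness steps remain sketched rather than carried out.
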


\par As a second application of the hyperbolic 
model we now prove the orbit-matching Lemma {l=match} from 
before. 

\bigskip\noindent {\bf Proof of Lemma \ref{l=match}}. With $\af=\R {\bf e_1}$ we come to our choice of $\nf$.
For $z\in \C$ let

$$n_z=\left( \begin{array}{ccc} 1+\frac{1}{2} z^2 & z &
-{1\over 2}   z^2  \\ z & 1 & -z \\
\frac{1}{2} z^2  & z & 1-\frac{1}{2}
z^2 \end{array} \right)
$$
and
$$N_\C=\{ n_z \mid z\in\C\}\, .$$

Further for $t\in \R $ with $|t|<{\pi\over 2}$ we set

$$a_t=\left( \begin{array}{ccc} \cos t & 0 & -i\sin t \\ 0 &
1 & 0 \\ -i \sin t & 0 & \cos t \end{array} \right) \in
\exp (i\Omega)\, .$$
\par The statement of the lemma translates into the
assertion
\begin{equation} Gn_{i\sin t }.{\bf x}_0=G a_t.{\bf x}_0\, .\end{equation}

Clearly, it suffices to prove that
$$a_t.{\bf x}_0=(\cos t, 0, -i\sin t)^T\in Gn_{i \sin t}.{\bf x}_0\, .$$
Now let $k\in   K$ and $b\in A$ be elements
which we write as

$$k= \left( \begin{array}{ccc} 1 & 0 & 0 \\ 0 &
\cos \theta & \sin\theta \\ 0 & -\sin \theta  & \cos \theta
\end{array} \right) \quad \hbox{and}\quad
b=\left( \begin{array}{ccc} \cosh r & 0 & \sinh r  \\ 0 &
1& 0 \\ \sinh r  & 0 & \cosh r \end{array} \right)$$
for real numbers $r,\theta$.
For $y\in \R$, a simple computation yields that

$$kbn_{iy}.{\bf x}_0= \left(\begin{array}{c}
\cosh r (1-{1\over 2} y^2) - {1\over 2}
y^2\sinh r \\
 iy\cos\theta + \sin\theta( \sinh r (1-{1\over 2} y^2) -
{1\over 2}y^2
\cosh r)\\ -iy\sin\theta +
\cos\theta( \sinh r (1-{1\over 2}y^2) -{1\over 2}y^2
\cosh r)
\end{array} \right)\, . $$
Now we make the choice of $\theta={\pi \over 2}$ which gives us
that

$$kb n_{iy}.{\bf x}_0= \left(\begin{array}{c}
\cosh r (1-{1\over 2} y^2) - {1\over 2}
y^2\sinh r \\
 \sinh r (1-{1\over 2}y^2) -{1\over 2}y^2
\cosh r\\ -iy
\end{array} \right)\, . $$
As $y=\sin t $ we only have to verify that we can choose $r$ such that
 $\sinh r (1-{1\over 2}y^2) -{1\over 2}y^2
\cosh r=0$. But this is equivalent to
$$\tanh r =  {{1\over 2} y^2 \over 1 -{1\over 2}y^2}\, .$$
In view of $-1< y=\sin t <1 $, the right hand side is smaller than one
and we can solve for $r$.
\qed

\end{document}